\DeclareFontFamily{OT1}{rsfs}{}
\DeclareFontShape{OT1}{rsfs}{n}{it}{<-> rsfs10}{}
\DeclareMathAlphabet{\curly}{OT1}{rsfs}{n}{it}
\newcommand\I{\curly I}
\renewcommand\O{\mathcal O}
\newcommand\PP{\mathbb P}
\newcommand\C{\mathbb C}
\newcommand\Q{\mathbb Q}
\newcommand\Z{\mathbb Z}
\newcommand\M{\mathcal M}
\newcommand\m{\mathfrak m}
\newcommand\T{\mathcal T}
\renewcommand\S{\mathcal S}
\newcommand{\Rt}[1]{\stackrel{#1\,}{\longrightarrow}}
\newcommand\into{\hookrightarrow}
\newcommand\Into{\ar@{^{ (}->}[r]}
\newcommand\udot{^{\bullet}}
\newcommand\take{\backslash}
\newfont{\bigtimesfont}{cmsy10 scaled \magstep5}
\newcommand{\bigtimes}{\mathop{\lower0.9ex\hbox{\bigtimesfont\symbol2}}}
\newcommand\im{\operatorname{im}}
\newcommand\id{\operatorname{id}}
\newcommand\Hom{\operatorname{Hom}}
\newcommand\Onto{\operatorname{Hom}^{\operatorname{onto}}}
\newcommand\Pure{\operatorname{Ext}^{\operatorname{pure}}}
\renewcommand\hom{\operatorname{hom}}
\newcommand\Ext{\operatorname{Ext}}
\newcommand\eext{\curly Ext}
\newcommand\ext{\operatorname{ext}}
\newcommand\Aut{\operatorname{Aut}}
\newcommand\Proj{\operatorname{Proj}\,}
\newcommand\Hilb{\operatorname{Hilb}}
\newcommand\Gr{\operatorname{Gr}}
\newcommand{\quot}{\operatorname{Quot}}
\newcommand{\bq}{\mathbf{Q}}
\newcommand{\bp}{\mathbf{P}}
\newcommand{\bb}{\mathbf{B}}
\newcommand{\Sl}{\operatorname{SL}}
\newcommand\beq[1]{\begin{equation}\label{#1}}
\newcommand\eeq{\end{equation}}
\newcommand\beqa{\begin{eqnarray*}}
\newcommand\eeqa{\end{eqnarray*}}
\makeatletter \@addtoreset{equation}{section} \makeatother
\newtheorem{defn}[equation]{Definition}
\newtheorem{thm}[equation]{Theorem}
\newtheorem{lem}[equation]{Lemma}
\newtheorem{prop}[equation]{Proposition}
\newtheorem{conj}[equation]{Conjecture}
\newenvironment{rmk}{\noindent\textbf{Remark}.}{\\}
\newenvironment{exa}{\noindent\textbf{Example}.}{\\}
\title[Wall crossing]{Hilbert schemes and stable pairs: GIT and derived category wall crossings}
\author{J. Stoppa and R. P. Thomas}
\begin{document}
\begin{abstract} \noindent 
We show that the Hilbert scheme of curves and Le Potier's moduli space of stable pairs with one dimensional support have a common GIT construction. The two spaces correspond to chambers on either side of a wall in the space of GIT linearisations.

We explain why this is not enough to prove the ``DT/PT wall crossing conjecture" relating the invariants derived from these moduli spaces when the underlying variety is a 3-fold. We then give a gentle introduction to a small part of Joyce's theory for such wall crossings, and use it to give a short proof of an identity relating the Euler characteristics of these moduli spaces. 

When the 3-fold is Calabi-Yau the identity is the Euler-characteristic analogue of the DT/PT wall crossing conjecture, but for general 3-folds it is something different, as we discuss.
\end{abstract}

%%%%%%%%%%%%%%%%%%%%%%%%%%%%%%%%%%%%%%%%%%%%%%%%%%%%%%%%%%%%%%%%%%%%%%%%%%%

\maketitle
\setcounter{tocdepth}{1}
\renewcommand\contentsname{\vspace{-1cm}}
\tableofcontents
\vspace{-1cm}

\section{Introduction}

This paper is motivated by the conjectural equivalence between two curve counting theories on smooth complex projective threefolds $X$: the one studied in \cite{MNOP} and the stable pairs of \cite{PT1}. 

MNOP and stable pairs invariants are sheaf-theoretic analogues of Gromov-Witten invariants, sometimes called DT and PT invariants respectively. The space of stable maps to $X$ is replaced by suitable moduli spaces of sheaves supported on curves in $X$. 

Fix $\beta\in H_2(X,\Z)$ and $n \in \mathbb{Z}$.
In MNOP theory we integrate suitable classes against the virtual fundamental class of the Hilbert scheme $I_n(X, \beta)$ of subschemes $Z$ of $X$ in the class $[Z]=\beta$ with holomorphic Euler characteristic $\chi(\O_Z)=n$. The virtual fundamental class comes from thinking of $I_n(X, \beta)$ as a moduli space of sheaves of trivial determinant -- namely the ideal sheaves $\I_Z$ with Chern character
\beq{cc}
\Big(1,0,-\beta,-n+\frac{\beta . c_1(X)}2\Big)\ \in\ H^0(X)\oplus H^2(X)\oplus H^4(X)\oplus H^6(X).
\eeq

For stable pair theory we work instead with \emph{stable pairs} $(F,s)$: $F$ is a \emph{pure} sheaf on $X$ with Chern character $(0,0,\beta,-n + \beta.c_1(X)/2)$, and $s\colon\O_X \to F$ is a section with
0-dimensional cokernel. A special case of the work of Le Potier \cite{lepot} constructs
the fine moduli space $P_n(X, \beta)$ as a projective scheme. The virtual fundamental class comes from thinking \cite{PT1} of $P_n(X, \beta)$ as a moduli space of objects of the derived
category of coherent sheaves on $X$ (with trivial determinant)  -- namely the complexes
$I\udot:=\{\O_X\to F\}$ with Chern character \eqref{cc}.

Roughly speaking, we think of $I_n(X,\beta)$ as parameterising pure curves plus points (free and embedded) on $X$. Any $Z\in I_n(X,\beta)$ contains a maximal
Cohen-Macaulay curve $C\subseteq Z$ (the pure curve: recall that Cohen-Macaulay means no embedded points) such that the kernel of $\O_Z\to\O_C$ is 0-dimensional (the points). Equally loosely we think of stable pairs as parameterising Cohen- Macaulay curves (the support of the sheaf $F$) and free points \emph{on the curve} (the cokernel of the section $s$).

Over the Zariski-open subset of Cohen-Macaulay curves $C$ with no free or embedded points, the moduli spaces $I_n(X,\beta)$ and $P_n(X,\beta)$ are isomorphic: the stable pair $\O_X\to\O_C$ determines and is determined by the kernel ideal sheaf $\I_C$. Indeed $I\udot$ is quasi-isomorphic
to $\I_C$.

When $\omega_X\cong\O_X$, i.e. $X$ is a Calabi-Yau threefold, MNOP and stable pair invariants take a particularly simple form. Then the virtual dimension is zero and we get invariants by taking the length of the 0-dimensional virtual cycle:
$$
I^{vir}_{m,_\beta}=\int_{[I_m(X,\beta)]^{vir}}1,
$$
and
$$ 
P^{vir}_{m,_\beta}=\int_{[P_m(X,\beta)]^{vir}}1.
$$

In this case the deformation-obstruction theories \cite{ThCasson, PT1, HT} used to define the virtual cycles are \emph{self dual} in the sense of \cite{BehrendDT}. This implies that $I^{vir}_{m,\beta},\,P^{vir}_{m,\beta}$ are in fact weighted Euler characteristics:
$$
I^{vir}_{m,\beta}=e(I_m(X,\beta),\chi^B), \qquad P^{vir}_{m,\beta}=e(P_m(X,\beta),\chi^B).
$$
Here the weighting function is Behrend's integer-valued constructible function $\chi^B$ \cite{BehrendDT}, which assigns to each point of the moduli space the multiplicity with which it contributes to the invariants. At smooth points of the moduli space, $\chi^B\equiv(-1)^{\dim}$.

We can also form their generating series
$$
Z^{I,vir}_\beta(X)(t):=\sum_{m\in\Z}I^{vir}_{m,\beta}t^m \qquad\text{and}\qquad
Z^{P,vir}_\beta(X)(t):=\sum_{m\in\Z}P^{vir}_{m,\beta}t^m.
$$
The conjectural equivalence between the MNOP and stable pair invariants in the Calabi-Yau case is then the following.

\begin{conj}\label{conj}\emph{\cite{PT1}} For $X$ a Calabi-Yau threefold,
$$
Z^{P,vir}_\beta(X)=\frac{Z^{I,vir}_\beta(X)}{Z^{I,vir}_0(X)}\,.
$$
Equivalently, for each $m\in\Z$ we have the following identity (where the right hand side is a finite sum):
\beq{virid}
I^{vir}_{m, \beta}\ =\ P^{vir}_{m, \beta}\ +\ I^{vir}_{1,0}\cdot P^{vir}_{m-1, \beta}\ +\ I^{vir}_{2,0}\cdot P^{vir}_{m-2,\beta}\ +\ \ldots\ .
\eeq
\end{conj}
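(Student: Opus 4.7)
The plan is to prove \eqref{virid} by a two-stage wall-crossing argument: first a GIT/motivic wall crossing in a common parameter space produces the identity at the level of motives (or Euler characteristics), and then a Behrend-function argument upgrades it to virtual invariants. For the first stage, I would realise both $I_n(X,\beta)$ and $P_n(X,\beta)$ as GIT quotients of a single projective parameter scheme; crossing a distinguished wall in the space of linearisations then exchanges one for the other, and the strictly semistable locus on the wall parameterises the split configurations consisting of a Cohen-Macaulay stable pair together with a zero-dimensional sheaf. The factor $Z^{I,vir}_0(X)$ in the conjecture is precisely designed to absorb these split contributions.

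Next I would lift this GIT picture to Joyce's motivic Hall algebra of $\mathrm{Coh}(X)$, or rather of an appropriate tilt of $D^b(\mathrm{Coh}\,X)$. The key structural input is that every $\I_Z$ with $Z\in I_n(X,\beta)$ sits in a canonical short exact sequence
\[
0\To\I_Z\To\I_C\To Q\To 0
\]
with $C\subseteq Z$ the maximal Cohen-Macaulay curve and $Q$ zero-dimensional, while every stable pair $(F,s)\in P_n(X,\beta)$ gives a complex $I\udot$ fitting in a triangle involving the same $\I_C$ but with the zero-dimensional sheaf shifted by $-1$. Summing over $n$ in the Hall algebra and using the standard identity for Hall products of extensions over a fixed base, one formally factors the generating series as $Z^I_\beta(X)=Z^P_\beta(X)\cdot Z^I_0(X)$, which is \eqref{virid} at the motivic level.

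Third, to pass from this motivic identity to the virtual identity \eqref{virid} I would invoke the Calabi-Yau hypothesis $\omega_X\cong\O_X$ and Behrend's identification $I^{vir}_{m,\beta}=e(I_m(X,\beta),\chi^B)$, $P^{vir}_{m,\beta}=e(P_m(X,\beta),\chi^B)$ recalled in the introduction. The motivic identity descends to \eqref{virid} provided $\chi^B$ satisfies a multiplicative identity on split extensions $\I_C\oplus Q$, together with an averaged version over non-split extensions parameterised by $\mathrm{Ext}^1(\I_C,Q)$ and $\mathrm{Ext}^1(Q,\I_C[-1])$ on the two sides of the wall.

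The main obstacle is precisely this last step: the Behrend function identity across the derived-category wall. Its proof requires a description of the moduli space, locally in the analytic topology, as the critical locus of a holomorphic Chern-Simons functional, after which multiplicativity of $\chi^B$ under direct sums reduces to the Thom-Sebastiani theorem for vanishing cycles. Modulo this ingredient the GIT/Hall-algebra combinatorics of the first two stages is essentially formal; it is the transfer of the wall crossing through Behrend's weighting that carries the real content of the conjecture, and exactly why the abstract flags that the unweighted Euler-characteristic version is the best one can hope to extract from GIT and Hall-algebra wall crossings alone.
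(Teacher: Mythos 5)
The statement you are proving is stated in the paper as a \emph{conjecture}, and the paper deliberately does not prove it: it proves only the unweighted Euler-characteristic analogue (Theorems \ref{main} and \ref{EulerPT}), and spends considerable effort explaining exactly why the ingredients you list — the common GIT construction of Theorem \ref{gitwc} plus Hall-algebra manipulations — do \emph{not} suffice for the virtual statement \eqref{virid}. Your proposal reproduces that road-map but defers the decisive step: the Behrend-function identity across the wall (equivalently, the Kontsevich--Soibelman algebra homomorphism, or the Joyce--Song multiplicative identities for $\chi^B$). Saying ``modulo this ingredient the combinatorics is essentially formal'' concedes the entire content of the conjecture; as written, your argument is a programme, not a proof, and it coincides with the programme the authors attribute to Kontsevich--Soibelman/Bridgeland/Toda rather than with anything established in this paper.

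There is also a concrete technical overstatement in your second stage. The claim that ``summing over $n$ in the Hall algebra'' formally factors the generating series as $Z^I_\beta(X)=Z^P_\beta(X)\cdot Z^I_0(X)$ ``at the motivic level'' is false: Joyce's integration map $P_q$ is only a \emph{Lie algebra} homomorphism (Theorem \ref{liealg}, reflecting $\chi(T_1,T_2)=0$ for $0$-dimensional sheaves), not an algebra homomorphism, and at the level of Serre polynomials the identity involves a nontrivial shift (the paper compares $Z^P_C(X)(q,qt)$ with $Z^I_C(X)(q,t)$); the product formula only appears in the $q\to 1$ limit via Theorem \ref{lim*}. Relatedly, the wall contributions are strictly semistable $0$-dimensional sheaves with automorphisms, so identifying them with the Hilbert-scheme-of-points series $Z^I_0$ requires the $\C^{[\,\cdot\,]}*1_{\T}^{-1}$ manipulation rather than a direct match with the GIT strata $I^{pur}_{n-k}(X,\beta)\times S^kX$. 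An honest weighted version of all of this is precisely what requires the (then partly conjectural) $\chi^B$-machinery; without it your stage three cannot be carried out, and with it your stages one and two are largely superseded. If you want a statement you can actually prove by your methods, it is the Euler-characteristic identity of Theorem \ref{main}, not \eqref{virid}.
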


Here $Z^{I,vir}_0(X)$ is the generating series of virtual counts of zero dimensional subschemes of $X$. By \cite{MNOP, BF, LP, Li} it is in fact
$$
Z^{I,vir}_0(X)(t)=M(-t)^{e(X)},
$$
where $M(t)$ is the MacMahon function, the generating function for 3-dimensional partitions.

Using Kontsevich-Soibelman's identities for $\chi^B$ \cite{KS}, now proved in some cases \cite{JS}, it should now be possible to extend what follows to the weighted Euler characteristics $I^{vir}_{m,\beta}$ and $P^{vir}_{m,\beta}$. But in this paper we content ourselves with working with the unweighted Euler characteristics
$$
I_{m,\beta}:=e(I_m(X,\beta)) \qquad\text{and}\qquad P_{m,\beta}:=e(P_m(X,\beta)),
$$
which are not deformation invariant. Form their generating series
$$
Z^I_\beta(X)(t):=\sum_{m\in\Z}I_{m,\beta}t^m \qquad\text{and}\qquad
Z^P_\beta(X)(t):=\sum_{m\in\Z}P_{m,\beta}t^m.
$$
In Sections \ref{firstProof} and \ref{secondProof} we will give two different proofs of the following topological analogue of Conjecture \ref{conj} (first proved
by Toda \cite{TodaPre} in the Calabi-Yau case, as discussed below).

\begin{thm}\label{main} Let $X$ be a smooth projective threefold. Then
$$
Z^P_\beta(X)=\frac{Z^I_\beta(X)}{Z^I_0(X)}\,.
$$
Equivalently for each $m\in\Z$ we have the following identity (where the right hand side is a finite sum):
\[I_{m,\beta}\ =\ P_{m, \beta}\ +\ I_{1,0}\cdot P_{m-1, \beta}\ +\ I_{2,0}\cdot P_{m-2,\beta}\ +\ \ldots\ .\]
\end{thm}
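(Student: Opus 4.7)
The plan is to view both $I_n(X,\beta)$ and $P_n(X,\beta)$ inside a single moduli problem of pairs, equivalently two-term complexes $[\O_X\to F]$, and to deduce the identity from a wall-crossing between two stability conditions on that problem. The geometric shadow, already visible in the introduction, is that every $\I_Z\in I_m(X,\beta)$ fits into a canonical short exact sequence
\[
0\longrightarrow \I_Z\longrightarrow \I_C\longrightarrow Q\longrightarrow 0,
\]
where $C\subseteq Z$ is the maximal Cohen-Macaulay subcurve, the surjection $\O_X\twoheadrightarrow\O_C$ is a stable pair in $P_{m-\ell}(X,\beta)$, and $Q$ is a zero-dimensional sheaf of length $\ell\geq 0$. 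One should expect a single wall with ideal sheaves as the stable objects on one side, stable pairs on the other, and strictly semistable objects on the wall parametrising extensions of a stable pair by a zero-dimensional sheaf. This structure already predicts the sum over $\ell$ in the claimed identity.

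For the first proof I would follow Le Potier and realise $I_n(X,\beta)$ and $P_n(X,\beta)$ as GIT quotients of a single Quot-type parameter scheme carrying a family of $\Sl$-linearisations, with the two moduli spaces arising as the quotients on either side of a wall. To translate this geometric wall-crossing into the Euler-characteristic identity I would stratify the semistable loci on each side according to their Jordan-H\"older type at the wall and exploit the multiplicativity of $e(\cdot)$ along Zariski-locally trivial fibrations; the key fibres are $\quot$-schemes of zero-dimensional quotients of the relevant ideal sheaves. Matching strata on the two sides and summing then gives the generating-series identity.

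For the second proof I would invoke the topological slice of Joyce's formalism directly in the abelian category of pairs. The wall-crossing identity becomes a constructible-function (Hall-algebra style) equation which, after pushing down to Euler characteristics, reads
\[
I_{m,\beta}\;=\;\sum_{\ell\geq 0}P_{m-\ell,\beta}\cdot e\bigl(\quot^\ell(\I_C)\bigr),
\]
where $\quot^\ell(\I_C)$ parametrises length-$\ell$ quotients of the ideal sheaf $\I_C$ of a Cohen-Macaulay curve $C$. Theorem \ref{main} is then reduced to the identity $e(\quot^\ell(\I_C))=I_{\ell,0}$, independently of $C$.

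The hard part is precisely this last identity. The scheme $\quot^\ell(\I_C)$ is not literally $\Hilb^\ell(X)$, since its points remember embedded structure along $C$; its Euler characteristic should nevertheless be independent of the curve $C$ and agree with that of $\Hilb^\ell(X)$. One expects to establish this by torus localisation, or by a local stratification around each point of $C$ where $\I_C$ is generated by a regular sequence and the local $\quot^\ell$ is Euler-characteristically indistinguishable from $\Hilb^\ell$. Packaging that local-to-global computation is exactly the job of the Joyce/Hall-algebra apparatus, and this is presumably what makes the second proof of Theorem \ref{main} the cleaner of the two.
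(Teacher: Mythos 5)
Your framing (a single wall, with ideal sheaves on one side, stable pairs on the other, and extensions by zero-dimensional sheaves on the wall) matches the paper, but the route you propose cannot be completed, because its pivot is false rather than merely unproven. The scheme $\quot^\ell(\I_C)$ of length-$\ell$ quotients of $\I_C$ is exactly the fibre $\varphi_I^{-1}(C,S^\ell X)$, i.e.\ the locus $I_\ell(X,C)$ of subschemes whose underlying Cohen--Macaulay curve is $C$; its Euler characteristic is $I_{\ell,C}$, and the correct relation is Theorem \ref{EulerPT}, $I_{\ell,C}=\sum_k e(\Hilb^kX)\,P_{\ell-k,C}$, which differs from $e(\Hilb^\ell X)=I_{\ell,0}$ whenever $C$ carries stable pairs with points. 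Already for $\ell=1$ one has $e(\quot^1(\I_C))=e(X)+P_{1,C}$, and $P_{1,C}$ is typically nonzero (for a smooth rigid curve $P_1(X,C)\cong C$). The local version fails in the same way: at a point $p\in C$ the punctual Quot of $\I_C$ has generating series $M(t)\sum_n e(\varphi_P^{-1}(C,np))t^n$, not $M(t)$, cf.\ \eqref{punctual}, so no localisation or regular-sequence argument can make it ``Euler-characteristically indistinguishable from $\Hilb^\ell$''. Note also that with the coefficients corrected to $I_{\ell,C}$ your displayed identity would read $I_{m,C}=\sum_\ell P_{m-\ell,C}\,I_{\ell,C}$, which is inconsistent already at $m=1$. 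The real content of the theorem is that the coefficients are $e(\Hilb^kX)$ with the points roaming over all of $X$, and in the paper this is extracted from Serre duality and Riemann--Roch for $\I_C$ (Lemma \ref{SDRR}: $\hom(\I_C,T)-\ext^1(T,\O_C)=[T]$), which lets one trade $\Hom(\I_C,\,\cdot\,)$ for $\Ext^1(\,\cdot\,,\O_C)\oplus\C^{[\,\cdot\,]}$ inside the Hall algebra and thereby split off the degree-zero series $Z^I_0$.

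Your two proof sketches also rely on mechanisms that are unavailable. For the GIT half, the fibres of $\varphi_I,\varphi_P$ over a wall point $(C,\sum_i n_ip_i)$ are quotients such as $\Onto(\I_C,T)/\Aut(T)$ and $\Pure(T,\O_C)/\Aut(T)$ -- Grassmannian-type spaces whose Euler characteristics are nonlinear in the jumping numbers $\hom(\I_C,\O_x)$ and $\ext^1(\O_x,\O_C)$ -- and they do not match stratum by stratum, nor is there a Zariski-locally trivial fibration realising the product structure; this is precisely why the paper insists that GIT alone is insufficient and invites the reader to try the three-point case. Similarly, ``pushing the Hall-algebra equation down to Euler characteristics'' is not legitimate term by term: the quotients by $\Aut(T)$ contribute denominators like $(q-1)^n$, so one must work with Serre polynomials, reorder via Reineke/inclusion--exclusion so that each factor has a finite $q\to 1$ limit, and only then pass to the limit (Theorems \ref{liealg} and \ref{lim*}). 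Finally, even granting a fixed-$C$ identity, you still need the step that integrates it over the locus of Cohen--Macaulay curves (with the attendant jumping of $\chi(\O_C)$) to reach the statement for $I_{m,\beta}$; your write-up elides this.
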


Here the $I_{k,0}=e(\Hilb^kX)$ are the Euler characteristics of the Hilbert schemes of 
points on $X$, and $Z^I_0(X)$ is their generating series. By \cite{Cheah} this is
$$
Z^I_0(X)=M(t)^{e(X)}.
$$

In fact we prove a little more. Fixing a Cohen-Macaulay $C$ in class $\beta$,
define $I_{n,C}$ to be the Euler characteristic of the subset of $I_n(X,\beta)$ consisting of subschemes whose underlying Cohen-Macaulay curve is $C$ (this is naturally a projective scheme, see below). Similarly let $P_{n,C}$
be the Euler characteristic of the subset (in fact projective scheme) of $P_n(X,\beta)$ of pairs supported on $C$.

\begin{thm} \label{EulerPT}
Let $C\subset X$ be a Cohen-Macaulay curve in a smooth projective threefold. Then
$$
I_{n,C}=P_{n,C}+e(X)P_{n-1,C}+e(\Hilb^2X)P_{n-2,C}+\ldots+e(\Hilb^nX)P_{0,C\,}.
$$
\end{thm}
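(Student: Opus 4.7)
The plan is to identify $I_{n,C}$ with a Quot scheme and $P_{n,C}$ with a Hilbert scheme of $C$, then stratify by support and reduce the identity to a pointwise local identity along $C$. Set $m:=n-\chi(\O_C)$. Sending $Z\in I_{n,C}$ to $W_Z:=\I_C/\I_Z$ identifies $I_{n,C}$ with the Quot scheme $\quot^m(\I_C)$ of length-$m$ zero-dimensional quotients of $\I_C$; and by the stable pair analysis of Pandharipande--Thomas \cite{PT1}, $P_{n,C}\cong \Hilb^m(C)$ via the length-$m$ subscheme $D\subset C$ cutting out the cokernel of $\O_C\hookrightarrow F$. The statement is thus equivalent to
\[
\sum_m e\bigl(\quot^m(\I_C)\bigr)\,t^m \;=\; \Bigl(\sum_k e(\Hilb^k X)\,t^k\Bigr)\cdot\Bigl(\sum_\ell e(\Hilb^\ell C)\,t^\ell\Bigr).
\]

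The first step is a support stratification. A zero-dimensional quotient $W$ of $\I_C$ decomposes canonically as $W=W_C\oplus W_U$ by support on $C$ and on $U:=X\setminus C$, and since $\I_C|_U=\O_U$ the piece $W_U$ is simply a length-$k$ subscheme of $U$. Hence $\quot^m(\I_C)=\coprod_{k=0}^m \Hilb^k(U)\times \quot^{m-k}_C(\I_C)$ set-theoretically, where $\quot^\ell_C(\I_C)$ denotes the length-$\ell$ quotients supported on $C$. All three moduli carry support maps to symmetric products, and the standard Cheah--G\"ottsche multiplicativity of Euler characteristics over disjoint supports then factorises each generating series as a product over points. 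Using Cheah's formula \cite{Cheah} $\sum_k e(\Hilb^k X)\,t^k=M(t)^{e(X)}$ together with $e(X)=e(U)+e(C)$, the theorem reduces to the \emph{pointwise local identity}
\[
\sum_\ell e\bigl(\quot^\ell_p(\I_C)\bigr)\,t^\ell \;=\; M(t)\cdot\sum_b e\bigl(\Hilb^b_p C\bigr)\,t^b
\]
at every point $p\in C$.

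The main obstacle is this local identity. The key case is $p$ a smooth point of $C$: one chooses coordinates so that $\I_C=(x,y)\subset\C[[x,y,z]]$, and the $\C^*$-action scaling $x,y$ (fixing $z$) acts on $\quot^\ell_0(\I_C)$ with isolated fixed points, namely the monomial ideals $J\subset(x,y)$ of colength $\ell$. Their number is computed via a bijection with pairs $(\Pi,b)$, where $\Pi$ is a three-dimensional Young diagram at the origin of $\mathbb{A}^3$ of size $a$ and $b=\ell-a$ encodes a ``$z$-tail'' along $C$, i.e.\ the complement of $J$ in the monomials of $(x,y)$ is decomposed canonically into a bulk Young diagram plus a tail in the $z$-direction. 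This yields $\sum_\ell e(\quot^\ell_0(\I_C))\,t^\ell=M(t)/(1-t)=M(t)\cdot\sum_b e(\Hilb^b_0\mathbb{A}^1)\,t^b$, which is the local identity at smooth $p$. At a singular point of $C$ an analogous torus-equivariant count using the explicit local presentation of $\I_C$ (for instance $\I_C=(y,xz)$ at a node of $C$ in $\mathbb{A}^3$, verified at small $\ell$ directly) establishes the identity there. Plugging the pointwise identity back into the G\"ottsche factorisation produces the generating series equality, and extracting the coefficient of $t^m$ gives the theorem.
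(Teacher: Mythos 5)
Your reduction hinges on the identification $P_{n,C}\cong\Hilb^m(C)$, and this is where the argument breaks: that correspondence (sending a pair to a subscheme of $C$) holds for \emph{Gorenstein} curves (\cite[Proposition B.5]{PT3}, quoted in the surface example of this paper), but a Cohen--Macaulay curve in a threefold need not be Gorenstein, and then both the identification and your ``pointwise local identity'' are false. Concretely, take $C\subset\mathbb{A}^3\subset X$ to be the union of the three coordinate axes, so $\I_C=(xy,yz,zx)$ needs three generators at the origin $p$. Then $\hom(\I_C,\O_p)=3$, so by Lemma \ref{SDRR} $\ext^1(\O_p,\O_C)=2$, and the stable pairs with length-one cokernel supported at $p$ form a $\PP^1$ (every nonzero extension of $\O_p$ by $\O_C$ is pure). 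Hence at $p$ the coefficient of $t$ on the left of your local identity is $\hom(\I_C,\O_p)=3$, while on the right $M(t)\cdot\bigl(1+e(\Hilb^1_pC)t+\cdots\bigr)$ gives $1+1=2$; globally this says $P_{1,C}=e(C)+1\neq e(\Hilb^1C)$. The correct local statement is the punctual identity \eqref{punctual}, whose right-hand side involves the punctual pairs fibre $\varphi_P^{-1}(C,np)$, not $\Hilb^n_pC$; these only agree at Gorenstein points.

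Even where your reduction is correctly stated, the remaining step is precisely the hard part and is not proved. The $\C^*$ scaling only $x,y$ does not have isolated fixed points on $\quot^\ell_0(\I_C)$ (its fixed locus consists of graded submodules, which form positive-dimensional families of $\C[[z]]$-modules); one must use the full three-torus, and then the count of monomial submodules of $(x,y)$ is the one-leg vertex evaluation $M(t)/(1-t)$ --- true, but your ``canonical bulk plus tail'' decomposition is asserted, not exhibited, and at singular points ``verified at small $\ell$ directly'' is not an argument. Establishing exactly these punctual identities at arbitrary (possibly singular, possibly non-Gorenstein) points is what the paper's machinery is for: Reineke's identity \eqref{Reineke}, its $\Pure$ analogue, Serre duality and Riemann--Roch in the form $h_T-e_T=[T]$, and Theorems \ref{liealg} and \ref{lim*} for the $q\to1$ limit of the integration map; the paper even warns that the naive pointwise analysis already becomes unmanageable at three points. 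So as it stands the proposal reduces the theorem to a local statement that is false at non-Gorenstein points and unproven at singular ones, and the essential difficulty is untouched.
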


We explain in Section \ref{gencase} how to deduce Theorem \ref{main} from
this by ``integrating" over the space of Cohen-Macaulay curves $C$. 

Naively one should think of the above identities as reflecting the decomposition of $I_n(X,\beta)$
into a union of the subset of pure Cohen-Macaulay curves with no free or embedded points, the subset with one free or embedded point, the subset with two points, etc. Birationally
such a decomposition is given by \eqref{J11} below.

\begin{rmk}
We emphasise that the identity of Euler characteristics in Theorem \ref{main} holds for \emph{any} threefold. (Only the $\chi^B$-weighted Euler characteristic motivation requires
the Calabi-Yau condition.) From a virtual class perspective it is very surprising that
the result should hold in the non-Calabi-Yau case, as we discuss at the end of Section
\ref{firstProof}. We also show there how to use Theorem \ref{EulerPT} to add incidence conditions, getting a topological analogue of the DT/PT conjecture with insertions for general threefolds.
\end{rmk}

An intermediate step in our proof is given in Section \ref{gitSection}, which may also be interesting in its own right. The main result there is Theorem \ref{gitwc}, which shows that the moduli spaces $I_n(X, \beta)$ and $P_n(X,\beta)$ are related by a special wall crossing in the sense of Geometric Invariant Theory. This result holds in any dimension. In fact a single wall is crossed, so we get natural morphisms $\varphi_I, \varphi_P$ to the moduli scheme of semistable objects modulo S-equivalence (pointwise this coincides
with the set of polystable objects) on the wall, which we call $SS_n(X, \beta)$,
\beq{J11} 
\hspace{-2cm} \xymatrix{
I_n(X,\beta) \ar[dr]_{\varphi_I}& & P_n(X,\beta) \ar[dl]^{\varphi_P}\\
& SS_n(X,\beta)\quad =\ \coprod^n_{k = 0} I^{pur}_{n-k}(X,\beta)\times S^k X.\hspace{-5cm}
}\eeq 
(The latter equality is a stratification by locally closed subschemes). The morphisms $\varphi_I, \varphi_P$ have a simple geometric meaning: they
separate the pure and torsion part of a subscheme (respectively the support and cokernel
of a stable pair). This produces a Cohen-Macaulay curve in $I^{pur}_{n-k}(X,\beta)$ (the
subset of the Hilbert scheme consisting of subschemes of pure dimension 1) and a finite
number of points with multiplicity. Compatibly with the general theory of GIT wall crossings
\cite{dolga, thad}, on passing from $I_n(X,\beta)$ to $P_n(X,\beta)$ each irreducible
component either disappears, appears, or undergoes a birational
transformation. (Since \cite{dolga, thad} only work with normal quotients
they only see this behaviour for \emph{connected} components.)

\begin{rmk} As we note at the end of Section \ref{firstProof}, our methods also
give a relation between the Euler
characteristics of single fibres of $\varphi_I,\varphi_P$. This is a punctual analogue of Theorem \ref{EulerPT}. Let $C$ be a Cohen-Macaulay curve in $\mathbb{A}^3$ and $p$ a closed point. Then one has
\beq{punctual}
\sum_{n \geq 0} e(\varphi^{-1}_I(C, n p)) t^n = M(t) \sum_{n \geq 0} e(\varphi^{-1}_P(C, n p)) t^n.
\eeq  
This holds whether $p$ lies on $C$ or not; in the latter case it is Cheah's formula for the punctual Hilbert schemes $\Hilb^n(p)$,
\[\sum_{n \geq 0} e(\Hilb^n(p))t^n = M(t).\]
\end{rmk}

The more fundamental ingredient in our proof is Joyce's motivic Ringel-Hall algebra \cite{JoyceII}. Theorem \ref{main} reflects identities between elements of this algebra naturally associated to the wall crossing morphisms $\varphi_I, \varphi_P$.

In fact it was immediately clear from \cite{PT1} that for Calabi-Yau 3-folds $X$, some kind of wall crossing formula for the Euler characteristics $I_{m,\beta},\,P_{m,\beta}$ should follow from the full weight of Joyce's theory (in particular \cite{JoyceIV}).
Working in the abelian subcategory of $D^b(X)$ given by tilting the usual one by the
subcategory of dimension zero sheaves one could then try to unravel the formulae in
\cite{JoyceIV} in this special case.

This is hard, however, and would use results from most of Joyce's intimidating
theory. This is because the wall crossing of \cite{PT1}, described in Section \ref{ptwc} below, relates ideal sheaves of 1-dimensional subschemes of $X$ to stable pairs and \emph{0-dimensional sheaves} (see for instance (\ref{ideal}, \ref{pair}) below). These 0-dimensional sheaves are strictly semistable, and have automorphisms. Joyce has a theory counting (in the sense of Euler characteristics, but yielding rational numbers) such things, and a wall crossing involving certain sums over graphs. Then one should relate this count to the count of ideal sheaves of 0-dimensional subschemes. Toda is not afraid of Joyce's work and has carried this out \cite{TodaPre}, as well as a related (and even harder) set of wall crossings in \cite{TodaPT}.

A quicker and apparently simpler proof was shown to us by Tom Bridgeland \cite{Br}, but using the full force of Kontsevich and Soibelman's theory \cite{KS}
(which is conjectural in parts). This meant that his proof was also something of a mystery to us. Nagao also found a proof for small crepant resolutions of affine toric Calabi-Yau 3-folds using all of Joyce's theory, and one using Kontsevich-Soibelman \cite{Nagao}.

Our original intention was to understand Bridgeland's work from first principles, reducing it to a down-to-earth analysis of the GIT wall crossing between moduli spaces derived in
Section 3. We soon discovered why that was impossible, but managed to find a way to make a
similar proof work (at the level of Euler characteristics) without using Kontsevich-Soibelman's
algebra homomorphism, making do instead with Joyce's (proved, and more easily understood)
virtual Poincar\'e polynomials on motivic Ringel-Hall algebras.
In particular, we do not
need to use Joyce's counting invariants for strictly semistable objects, nor his remarkable theory of virtual indecomposables.

So we decided to use this paper as an opportunity to give an elementary introduction to a small part of Joyce's work, and explain why it is necessary (and GIT is not sufficient) to handle wall crossing formulae. We illustrate this with a couple of proofs of the wall
crossing formula, one along the lines of Bridgeland's, the second in Section \ref{secondProof}
remarkably short and simple and closer to the philosophy of stability conditions.

Bridgeland has now found a way to carry out his proof using Joyce's work and the weighting
$\chi^B$, thus proving Conjecture \ref{conj} without assuming any other conjectures.
We decided
that this paper still had some value, however, in the GIT construction, the introduction
to Joyce's work, and the simpler proof that we give in Section \ref{secondProof} avoiding
Joyce's virtual indecomposables. Finally the identity of Euler characteristics clearly
has independent interest of its own, regardless of any virtual class motivation.

\noindent\textbf{Acknowledgements.} We mainly learnt about the work of Joyce and
Kontsevich-Soibelman from talking to Tom Bridgeland, from his lectures and his
preprint \cite{Br}. We also benefitted from stealing his work and that of Yukinobu Toda. 
We would like to thank Jim Bryan, Daniel Huybrechts, Dominic Joyce, Davesh Maulik,
Kentaro Nagao, Rahul Pandharipande, Bal\'azs Szendr\H oi and Yukinobu Toda for useful conversations. Special thanks are due to David Steinberg and an anonymous referee for carefully reading the manuscript and making many useful suggestions. 
The first author thanks Simon Donaldson and the Royal Society for support
during a visit to Imperial College London, and Gang Tian for a visit to BICMR, Beijing.

\section{Coherent systems and pairs}

Let $(X, \O_X(1))$ be a smooth complex projective scheme of dimension $D$ with a very ample line bundle. By a sheaf $F$ we always mean a coherent sheaf of modules over $\O_X$. Its \emph{Hilbert polynomial} with respect to $\O_X(1)$ is $P_F(m):= \chi(F(m))$. This is a polynomial in $m$ with degree $d = \dim(F) = \dim(\operatorname{Supp}(F))$, where $\operatorname{Supp}(F) \subset X$ is the closed subscheme defined by the kernel of the canonical section $\O_X \to \mathscr{E}nd(F)$. 

Let $r(F)$ denote the \emph{multiplicity} of $F$, that is $d!$ times the leading coefficient of the Hilbert polynomial $P_F$,
\[P_F(m) = r(F) \frac{m^d}{d!} + O(m^{d-1}).\]
Then the \emph{reduced Hilbert polynomial} of $F$ is simply 
\[p_F = \frac{P_F}{r}.\]
The essential reference for the rest of this section is Le Potier \cite{lepot}.
\begin{defn} A \emph{coherent system} on $X$ is a pair $(\Gamma, F)$, where $F \in Coh(X)$ and $\Gamma \subset H^0(F)$. A \emph{morphism}
$f\colon(\Gamma', F') \to (\Gamma, F)$ is a morphism of sheaves $F' \to F$ such that
$f_*(\Gamma') \subset \Gamma$. The \emph{dimension} $d$ of a coherent system $(\Gamma, F)$ is the dimension of $F$; i.e. $d = \dim\operatorname{Supp}(F)$, \emph{not} $\dim(\Gamma)$.  
\end{defn}
From the next section we will restrict to the special case $\dim(\Gamma) = 1$, $d = 1$.
\begin{defn} A coherent system $(\Gamma, F)$ with $h^0(F)\geq 1, \dim\Gamma = 1$, $d = 1$ is called simply a \emph{pair}. As
a slight abuse of notation we also denote this by $(F,s)$ or $[s\!:\O_X \to F]$, where $s \in H^0(F)$ is a nonvanishing section (defined up to a nonzero scalar multiple). 
\end{defn}
A coherent system $(\Gamma, F)$ does not have an intrinsically defined Hilbert polynomial. Roughly speaking we must prescribe a weight for $\Gamma$. Let $Q$ be a positive rational polynomial.
\begin{defn}The \emph{reduced Hilbert polynomial} of $(\Gamma, F)$ with respect to $Q$ is given by 
\[p^Q_{(\Gamma, F)} = \frac{\dim(\Gamma)}{r(F)}Q + p_F.\]
\end{defn}
Le Potier introduced Gieseker stability conditions for coherent systems. 
\begin{defn}
A coherent system $(\Gamma, F)$ is $Q$-\emph{semistable} if 
\begin{enumerate}
\item[$\bullet$] the sheaf $F$ is pure: it has no nontrivial subsheaves of
dimension $\leq d-1$,
\item[$\bullet$] for any subsheaf $F' \subset F$ with $0 < r(F') < r(F)$, setting $\Gamma' = \Gamma \cap H^0(F')$, one has the inequality of reduced Hilbert polynomials
\[p^Q_{(\Gamma', F')} \le p^Q_{(\Gamma, F)\,}.\]
\end{enumerate} 
It is $Q$-\emph{stable} if in addition the above inequality is strict.
\end{defn}

Let $P_X = \chi(\O_X(m))$ denote the Hilbert polynomial of $X$. 
\begin{defn} We say a coherent system is \emph{(semi)stable} if it is $P_X$-(semi)stable. Similarly a \emph{(semi)stable pair} is simply a $P_X$-(semi)stable pair. 
\end{defn}
One can prove that replacing $P_X$  by any rational, positive polynomial $Q$ with $\deg(Q) \geq D$ in the above definition gives the same (semi)stable objects. 

An important feature of a semistable (i.e. $P_X$-semistable) coherent system $(\Gamma, F)$ is that $\Gamma$ must generate $F$ generically.
This is proved by applying the stability inequality to the subsheaf $F'\subset
F$ given by the image of $\Gamma$ in $F$ to show that $r(F')=r(F)$; see \cite[Proposition 4.4]{lepot}. 

In the case of interest to us one can show that semistability coincides with stability which in turn corresponds to a simple geometric condition.
\begin{prop}[\cite{PT1} Lemma 1.3]\label{stablepairs} A pair $s\!: \O_X \to F$ is semistable if and only if it is stable, and this holds precisely when 
\begin{enumerate}
\item[$\bullet$] the $1$-dimensional sheaf $F$ is pure,
\item[$\bullet$] the section $s$ has $0$-dimensional cokernel. 
\end{enumerate}
In this case $\operatorname{Supp}(F)$ is a Cohen-Macaulay curve $C$.
\end{prop}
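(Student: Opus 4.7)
The plan is to analyze the semistability inequality directly, exploiting the fact that $Q = P_X$ has degree $D = \dim X \geq 2$ while the reduced Hilbert polynomial $p_F$ of a $1$-dimensional sheaf has degree $1$. For a subsheaf $F'\subset F$ with $0 < r(F') < r(F)$, setting $\Gamma' = \Gamma\cap H^0(F')$, one sees that either $\Gamma' = 0$ (in which case $p^Q_{(\Gamma',F')} = p_{F'}$ drops to degree $1$) or $\Gamma' = \Gamma$ (in which case $p^Q_{(\Gamma',F')}$ retains degree $D$). Which case occurs is controlled by whether $s$ factors through $F'$, and this dichotomy turns the stability comparison into a simple combinatorial check.

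\emph{Sufficiency.} Assume $F$ is pure and $\operatorname{coker}(s)$ is $0$-dimensional. If $s$ factored through $F'$ with $r(F')<r(F)$, then $F/F'\subset F/\im(s)$ would be $0$-dimensional, forcing $r(F') = r(F)$ by additivity of multiplicity in short exact sequences; contradiction. So $\Gamma' = 0$, and $p^Q_{(\Gamma',F')}$ of degree $1$ is strictly dominated by $p^Q_{(\Gamma,F)} = P_X/r(F) + p_F$ of degree $D$. The pair is therefore stable, hence semistable.

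\emph{Necessity, and the equality semistable $=$ stable.} Purity is built into the definition of semistability. If $\operatorname{coker}(s)$ had positive-dimensional support, then $F' := \im(s)\neq 0$ would satisfy $0<r(F')<r(F)$ and $\Gamma' = \Gamma$; but the leading degree-$D$ coefficient of $p^Q_{(\Gamma,F')} - p^Q_{(\Gamma,F)}$ would equal $1/r(F') - 1/r(F) > 0$, violating semistability. The identification semistable $=$ stable then follows: any semistable pair satisfies both bullet conditions, and the sufficiency argument shows these force stability.

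\emph{Cohen-Macaulay support.} The main obstacle is the final assertion, because purity of $F$ as an $\O_X$-module does not \emph{manifestly} rule out embedded structure in the scheme-theoretic support $C = V(\operatorname{ann}(F))$. Suppose $C$ had a $0$-dimensional embedded point at $p$, so that there exists $\bar r\neq 0$ in $\O_{X,p}/\operatorname{ann}(F)_p$ annihilated by $\mathfrak{m}_p$. Lifting $\bar r$ to $r\in\O_{X,p}$, the subsheaf $r\cdot F_p\subset F_p$ is killed by $\mathfrak{m}_p$, so is $0$-dimensional, and is nonzero (else $r\in\operatorname{ann}(F_p)$, forcing $\bar r = 0$). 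This contradicts the purity of $F$, so $C$ must be a Cohen-Macaulay curve.
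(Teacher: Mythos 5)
Your proof is correct, and there is nothing in the paper to compare it against: the paper does not prove this proposition but simply quotes it from \cite{PT1} (Lemma 1.3). Your direct verification from Le Potier's definition is the standard argument (and essentially the one in \cite{PT1}): since $\dim\Gamma=1$, the only possibilities are $\Gamma'=0$ or $\Gamma'=\Gamma$, i.e.\ the comparison is decided by whether $s$ factors through $F'$, and because $\deg P_X=D\ge 2$ strictly exceeds $\deg p_{F'}=1$ the leading-order analysis you carry out settles both directions at once, giving semistable $=$ stable $=$ the two bullet conditions. Two small points of polish. First, in the sufficiency step, if $\im(s)\subset F'$ then $F/F'$ is a \emph{quotient} of $\coker(s)$, not a subsheaf of it; the conclusion that it is $0$-dimensional (hence $r(F')=r(F)$) is of course unaffected. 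Second, in the Cohen--Macaulay step your socle element produces a nonzero finite-length submodule $rF_p$ of the \emph{stalk} $F_p$, so you should add the standard remark that purity is detected locally: $\mathfrak m_p$ is then an associated prime of $F$, so the subsheaf of sections of $F$ supported at $p$ is a nonzero $0$-dimensional subsheaf of $F$ itself, contradicting purity. Note also that the same depth-zero condition (a nonzero element of $\O_{C,p}$ killed by $\mathfrak m_p$) covers isolated points of $C$ as well as embedded ones, which is needed for $C$ to be a Cohen--Macaulay \emph{curve}, i.e.\ purely $1$-dimensional; your phrasing mentions only embedded points, but the algebra you use handles both cases.
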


Let us now discuss the moduli problem. For any scheme $S$ let $\mathscr{F}$ be a flat family of sheaves on $X$ with Hilbert polynomial $P$ parameterised by $S$, i.e. a sheaf on $X \times S$ which is flat over $S$. Let $p, q$ denote projections to $X, S$ respectively. One is tempted to define a family of coherent systems parameterised by $S$ simply as a locally free subsheaf of $q_*\mathscr{F}$. However the natural map $q_* \mathscr{F}(s) \to H^0(\mathscr{F}_s)$ for a closed point $s \in S$ need not be an isomorphism. This is why the Serre dual notion is used instead. Recall that $D:=\dim X$.
\begin{defn} A \emph{flat family of coherent systems on $X$ parameterised by $S$} is a sheaf $\mathscr{F}$ on $X \times S$, flat over $S$, together with a locally free quotient of the sheaf $\eext^{D}_{q}(\mathscr{F}, p^*\omega_X)$.
\end{defn}
It is shown in \cite[Lemma 4.9]{lepot} that, for any closed point $s \in S$, 
\[\eext^{D}_{q}(\mathscr{F}, p^*\omega_X)(s) \cong \Ext^{D}(\mathscr{F}_s, \omega_X).\] 
We now introduce the usual functor of families,
\[S \mapsto \underline{\operatorname{Syst}}_{X, Q}(P)(S),\]
mapping a scheme $S$ to the set of isomorphism classes of flat families of $Q$-semistable
coherent systems on $X$ whose underlying sheaf has Hilbert polynomial $P$, parametrised by $S$. The discussion
in \cite[Section 4.3]{lepot} shows that this is a well
defined contravariant functor. 

\begin{thm}\cite[Theorem 4.12]{lepot}  There is a projective scheme $\operatorname{Syst}_{X, Q}(P)$ co-representing the functor $\underline{\operatorname{Syst}}_{X, Q}(P)$. Its closed points are in one-to-one correspondence with S-equivalence classes of $Q$-semistable
coherent systems.
\end{thm}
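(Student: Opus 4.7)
My plan is a GIT construction paralleling Simpson's moduli of Gieseker-semistable sheaves, enlarged by a relative Grassmannian factor that remembers the subspace $\Gamma \subset H^0(F)$ (dually, a quotient of $\Ext^D(F,\omega_X)$). Throughout I fix both $P$ and $\ell := \dim\Gamma$.

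The first step is boundedness: the set of $Q$-semistable $(\Gamma,F)$ with these invariants is bounded. Purity of $F$ is forced by the definition, and applying the $Q$-semistability inequality to subsheaves generated by hyperplane sections yields Mumford-type slope bounds, from which Kleiman-Simpson boundedness of the $F$ follows. Choose $m \gg 0$ so that every such $F(m)$ is globally generated with $h^i(F(m)) = 0$ for $i > 0$ and $h^0(F(m)) = P(m) =: N$. Set $V = \C^N$ and let $\quot^0 \subset \quot(V \otimes \O_X(-m), P)$ be the open subscheme where the universal quotient $\mathscr{F}$ is pure and $V \xrightarrow{\sim} H^0(\mathscr{F}_s(m))$ pointwise. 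On $\quot^0$ the sheaf $\eext^{D}_q(\mathscr{F}, p^*\omega_X)$ is locally free of rank $N$ by Serre duality. Form the relative Grassmannian $R \to \quot^0$ of rank-$\ell$ quotients of this sheaf; this is the natural $GL(V)$-equivariant parameter scheme for the rigidified functor, and $\operatorname{Syst}_{X,Q}(P)$ will be obtained by quotienting by $SL(V)$ (the diagonal scalars act trivially on the moduli problem).

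Next I would equip $R$ with a $GL(V)$-linearized ample line bundle $\L_Q$ of the form $\O(a)_{\quot}\boxtimes\O(b)_{\Gr}$, using Grothendieck's embedding $\quot \hookrightarrow \Gr(V\otimes H^0(\O_X(\ell'-m)),\,P(\ell'))$ for some $\ell' \gg m$ together with the Pl\"ucker embedding of the relative Grassmannian, with the ratio $a:b$ dictated by the weights in $p^Q_{(\Gamma,F)} = \tfrac{\ell}{r(F)}Q + p_F$. A one-parameter subgroup $\lambda\colon \C^* \to GL(V)$ corresponds to a weighted flag $V_\bullet \subset V$; its image under $V \otimes \O_X(-m) \twoheadrightarrow F$ defines a subsheaf filtration $F_\bullet \subset F$, and $\Gamma_i := \Gamma \cap H^0(F_i)$ can be read off the Grassmannian weight. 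Computing the Hilbert-Mumford weight of $\lambda$ against $\L_Q$ for $\ell' \gg 0$ collapses it to exactly
\[
p^Q_{(\Gamma_i, F_i)} \,\leq\, p^Q_{(\Gamma, F)} \quad\text{for every }i,
\]
so GIT-(semi)stability becomes $Q$-(semi)stability. The main obstacle lies here: one must show that weight-minimizing one-parameter subgroups come from filtrations by genuine \emph{subsystems} $(\Gamma',F')$ with $0 < r(F') < r(F)$ rather than from arbitrary subspaces of $V$ not matched to the sheaf filtration, and one must check that the ample cone of $R$ actually contains an $\L_Q$ realizing the prescribed ratio $a:b$ (this is where the hypothesis $\deg Q \geq D$ enters, and why $\ell'$ must be chosen very large and after $m$).

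Granting that, the GIT quotient $\operatorname{Syst}_{X,Q}(P) := R^{ss}/\!\!/ SL(V)$ is projective because $\L_Q$ is ample; it co-represents $\underline{\operatorname{Syst}}_{X,Q}(P)$ by combining the universal property of $\quot$ with descent of $SL(V)$-invariant morphisms to the categorical quotient. Finally, closed points correspond to closed orbits in $R^{ss}$; a Jordan-H\"older argument for semistable coherent systems with $p^Q$-equal graded pieces produces a unique polystable representative in each S-equivalence class, giving the claimed bijection.
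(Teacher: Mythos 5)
This statement is not proved in the paper at all: it is quoted verbatim from Le Potier \cite[Theorem 4.12]{lepot}, and the closest the paper comes to a proof is the ad hoc reconstruction in Section \ref{gitSection} for the special case $\dim\Gamma=1$, $d=1$. Your outline is indeed Le Potier's strategy (Quot scheme, the Serre-dual $\eext^D_q(\mathscr{F},p^*\omega_X)$ to encode $\Gamma$, Hilbert--Mumford matching of GIT stability with $Q$-stability, S-equivalence via closed orbits), so the skeleton is right. But the one step you yourself single out as "the main obstacle" -- the existence of an ample linearisation realising the prescribed ratio $a:b$ on the relative Grassmannian $R$ over $\quot^0$ -- is a genuine gap, and as you have set things up it actually fails rather than merely needing a check. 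The Pl\"ucker bundle on a relative Grassmannian of a coherent sheaf over a projective base is only \emph{relatively} ample; to make it ample on the total space one must add a large multiple pulled back from the Quot factor, and that multiple grows with the Grothendieck twist $\ell'$, destroying exactly the fixed weighting needed to reproduce $p^Q$. This is the point of the Remark after the definition of $\mathcal{L}_0,\mathcal{L}_1$ in Section \ref{gitSection}: Le Potier's (and the paper's) fix is not to linearise on $R$ (resp.\ $\bp$) at all, but to use the surjection $\eext^D_q(\mathscr{F}(m),p^*\omega_X)\otimes R_m\to\eext^D_q(\mathscr{F},p^*\omega_X)$ to embed the system locus into a \emph{fixed} ambient space ($\PP(V\otimes R_m^*)\times\bq$ in the pairs case, its Grassmannian analogue for general $\ell$), take the scheme-theoretic closure there, and linearise by a product of honestly ample bundles, where any positive ratio is available. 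Without this compactification step your GIT quotient is not known to exist as a projective scheme, which is the content of the theorem.

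A second, smaller but real, error: on $\quot^0$ the sheaf $\eext^D_q(\mathscr{F},p^*\omega_X)$ is \emph{not} locally free of rank $N$. By \cite[Lemma 4.9]{lepot} its fibre at $s$ is $\Ext^D(\mathscr{F}_s,\omega_X)\cong H^0(\mathscr{F}_s)^*$ (the untwisted $H^0$, not $H^0(\mathscr{F}_s(m))$), whose dimension jumps; this jumping is precisely why the dual Ext formulation is used for the family functor in the first place and why one works with the relative Grassmannian/Proj of a merely coherent sheaf and then passes to the closure. The Grassmannian construction survives without local freeness, but your rank count, and the implicit identification of its points with $\ell$-dimensional subspaces of $H^0(F)$ at every point (rather than only where base change holds), need the semistability analysis on the closure -- the analogue of the argument in Proposition \ref{pairs_moduli} showing that $\mathcal{L}_1$-semistable points land back in $\bp|_T$. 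Also, $\deg Q\geq D$ is not a hypothesis of the theorem (it concerns the comparison with $P_X$-stability), so it should not be needed to make the linearisation work.
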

\begin{defn} Let $P_\beta$ be the Hilbert polynomial $P_\beta(m) = m\int_{\beta} c_1(\O_X(1))+n$. Le Potier's space of stable pairs $P_n(X,\beta)$ is the moduli space $\operatorname{Syst}_{X, P_X}(P_{\beta})$. 
\end{defn}
\begin{rmk} The moduli space $P_n(X,\beta)$ is fine. This is because stable pairs have no nontrivial automorphisms. See \cite[Section 2.3]{PT1} for more details.
\end{rmk}

In order to prove the GIT wall crossing in the next section we need to take a different point of view. Namely we assume the conclusion of Proposition \ref{stablepairs} as the \emph{definition} of stable pairs, and provide an ad hoc construction for their moduli space which slightly differs from Le Potier's. This is because we need to make full use of the special features of the $d = 1$ case.
\section{GIT wall crossing}\label{gitSection}
Throughout this section $P_\beta$ denotes the Hilbert polynomial
\[P_\beta(m) = r m + n, \]
where the multiplicity $r$ is given by $\int_{\beta}c_1(\O_X(1))$.

Let us clarify what we mean by a GIT wall crossing.
\begin{defn}\label{wallcross} A GIT wall crossing between two projective schemes $\M_0,\M_1$ is given by a projective scheme $\mathcal{N}$ with an action of a reductive algebraic group $G$, ample $\mathbb{Q}$-linearisations $\mathcal{L}_0, \mathcal{L}_1$ for the action of $G$ and  $t^* \in (0,1)\cap \mathbb{Q}$ such that, if $t \in [0,1]\cap \mathbb{Q}$ and $\mathcal{L}_t = (1-t)\mathcal{L}_0 + t \mathcal{L}_1$, then
\begin{enumerate}
\item[$\bullet$] for $0\leq t < t^*$, 
\[\mathcal{N}^{s}(\mathcal{L}_t) = \mathcal{N}^{ss}(\mathcal{L}_t) = \mathcal{N}^{ss}(\mathcal{L}_0)\]
and $\mathcal{N}/\hspace{-3pt}/_{\mathcal{L}_0}G \cong\M_0$;
\item[$\bullet$] for $t^* < t \leq 1$, 
\[\mathcal{N}^{s}(\mathcal{L}_t) = \mathcal{N}^{ss}(\mathcal{L}_t) = \mathcal{N}^{ss}(\mathcal{L}_1)\]
and $\mathcal{N}/\hspace{-3pt}/_{\mathcal{L}_1}G \cong\M_1$.
\end{enumerate}
\end{defn}
Clearly then $\mathcal{N}/\hspace{-3pt}/_{\mathcal{L}_t}G \cong\M_0$ for $0\leq t < t^*$, $\mathcal{N}/\hspace{-3pt}/_{\mathcal{L}_t}G \cong\M_1$ for $t^* < t \leq 1$.
 
\begin{rmk} Note that, unlike for example \cite{thad} Lemma 3.2, we require  semistability to coincide with stability away from $t^*$.
\end{rmk}

A GIT wall crossing gives the following well known diagram.
\begin{lem}In the situation of Definition \ref{wallcross} there are inclusions
\[
\xymatrix{
\mathcal{N}^{s}(\mathcal{L}_0) \ar@{_{(}->}[dr] & & \mathcal{N}^{s}(\mathcal{L}_1)\ar@{^{(}->}[dl]\\ & \mathcal{N}^{ss}(\mathcal{L}_{t^*})
}\]
inducing morphisms
\[\xymatrix{
\mathcal{N}/\hspace{-3pt}/_{\mathcal{L}_0} G \ar[dr]_{\varphi_0}& &
\mathcal{N}/\hspace{-3pt}/_{\mathcal{L}_1} G \ar[dl]^{\varphi_1}\\
& \mathcal{N}/\hspace{-3pt}/_{\mathcal{L}_{t^*}}G .\!
}\]
The fibre of $\varphi_{i}$ over $\varphi_i(G\cdot x)$ for $i = 0, 1$ is given by 
\[\varphi^{-1}_i(\varphi_i(G\cdot x)) = \{G \cdot y : \overline{G\cdot x} \cap \overline{G\cdot y} \neq \emptyset \emph{\,\,\,in\,\,\,} \mathcal{N}^{ss}(\mathcal{L}_{t^*})\}.\]
\end{lem}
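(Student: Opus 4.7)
The plan is to deduce the inclusions from the numerical (Hilbert--Mumford) criterion, then produce the morphisms $\varphi_i$ from the universal property of the GIT quotients on the wall, and finally read off the fibres from the standard description of points of a GIT quotient as closed orbits.

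First I would establish $\mathcal{N}^s(\mathcal{L}_i)\subseteq \mathcal{N}^{ss}(\mathcal{L}_{t^*})$ for $i=0,1$. The key observation is that, for any one-parameter subgroup $\lambda\colon \mathbb{G}_m\to G$ and any $x\in\mathcal{N}$, the Mumford weight $\mu^{\mathcal{L}_t}(x,\lambda)$ depends linearly on $t$, since $\mathcal{L}_t=(1-t)\mathcal{L}_0+t\mathcal{L}_1$. By hypothesis $\mathcal{N}^{ss}(\mathcal{L}_t)=\mathcal{N}^s(\mathcal{L}_0)$ for every $t\in[0,t^*)\cap\mathbb{Q}$, so if $x\in\mathcal{N}^s(\mathcal{L}_0)$ the Hilbert--Mumford criterion gives $\mu^{\mathcal{L}_t}(x,\lambda)\geq 0$ throughout that interval. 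Passing to the limit $t\to t^*$ along rationals yields $\mu^{\mathcal{L}_{t^*}}(x,\lambda)\geq 0$ for every $\lambda$, i.e.\ $x\in\mathcal{N}^{ss}(\mathcal{L}_{t^*})$. The symmetric argument on $(t^*,1]$ gives the inclusion $\mathcal{N}^s(\mathcal{L}_1)\subseteq\mathcal{N}^{ss}(\mathcal{L}_{t^*})$.

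Next I would construct $\varphi_0,\varphi_1$ by the universal property. Because stability coincides with semistability on the open chambers, $\mathcal{N}^{ss}(\mathcal{L}_0)\to\M_0$ is a geometric, hence categorical, quotient. Composing the open inclusion from Step~1 with the affine GIT quotient $\mathcal{N}^{ss}(\mathcal{L}_{t^*})\to\mathcal{N}/\!\!/_{\mathcal{L}_{t^*}}G$ produces a $G$-invariant morphism $\mathcal{N}^{ss}(\mathcal{L}_0)\to \mathcal{N}/\!\!/_{\mathcal{L}_{t^*}}G$, which by the universal property descends uniquely to $\varphi_0\colon\M_0\to\mathcal{N}/\!\!/_{\mathcal{L}_{t^*}}G$; the same argument yields $\varphi_1$. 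For the fibres I would invoke the standard fact that two points $x,y\in\mathcal{N}^{ss}(\mathcal{L}_{t^*})$ have the same image in $\mathcal{N}/\!\!/_{\mathcal{L}_{t^*}}G$ precisely when $\overline{G\cdot x}\cap\overline{G\cdot y}\neq\emptyset$ in $\mathcal{N}^{ss}(\mathcal{L}_{t^*})$. Since every orbit in $\mathcal{N}^s(\mathcal{L}_i)$ is closed inside $\mathcal{N}^{ss}(\mathcal{L}_i)$, a point of $\M_i$ belongs to $\varphi_i^{-1}(\varphi_i(G\cdot x))$ iff its orbit closure (taken in $\mathcal{N}^{ss}(\mathcal{L}_{t^*})$) meets that of $G\cdot x$, which is exactly the stated formula.

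The main obstacle is really only Step~1: one must be careful that the strict positivity of the Mumford weight on the open chamber, required for stability, collapses to a non-strict weak positivity at the wall in the limit, so that one loses stability but keeps semistability. Steps~2 and~3 are essentially formal manipulations of the universal property of the GIT quotient and the standard equivalence relation identifying its closed points with closed orbits in the semistable locus.
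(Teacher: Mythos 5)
Your argument is correct and is essentially the paper's own proof, which simply cites the Hilbert--Mumford criterion (using linearity of $\mu^{\mathcal{L}_t}(x,\lambda)$ in $t$, exactly as you do) for the inclusions, and the standard description of the quotient map via closures of semistable orbits together with $\mathcal{N}^{ss}(\mathcal{L}_i)=\mathcal{N}^{s}(\mathcal{L}_i)$ for the fibres; you have merely filled in the routine details. The only cosmetic slip is calling $\mathcal{N}^{ss}(\mathcal{L}_{t^*})\to\mathcal{N}/\hspace{-3pt}/_{\mathcal{L}_{t^*}}G$ an ``affine'' quotient --- it is a good (projective) GIT quotient, locally modelled on affine quotients --- which does not affect the argument.
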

\begin{proof} The first part is an application of the Hilbert-Mumford criterion. For the second part we only use the standard description of the quotient map $\mathcal{N} \to \mathcal{N}/\hspace{-3pt}/_{\mathcal{L}_{t^*}} G$ in terms of closures of semistable orbits, and the fact that $\mathcal{N}^{ss}(\mathcal{L}_{i}) = \mathcal{N}^s(\mathcal{L}_{i})$ for $i = 0, 1$.
\end{proof}

While there exist many trivial GIT wall crossings we are only interested in those that give interesting morphisms $\varphi_0, \varphi_1$. 

The MNOP/stable pairs GIT wall crossing can be stated as follows. For any subscheme
$Z \in I_n(X,\beta)$ let $Z^{pur}$ denote its maximal Cohen-Macaulay closed subscheme
(so $Z^{pur}$ is pure of dimension $1$, and $\O_Z\to\O_{Z^{pur}}$ has
finite kernel). Also let $I^{pur}_n(X,\beta) \subset I_n(X,\beta)$ be the locus of Cohen-Macaulay closed subschemes, and write $S^k X$ for the $k$th symmetric product. 
\begin{thm}\label{gitwc} There is a GIT wall crossing between the moduli spaces $I_n(X, \beta)$ and $P_n(X, \beta)$ as in Definition \ref{wallcross}, for which the following holds. 

Write
\beq{triangle} 
\xymatrix{
I_n(X,\beta) \ar[dr]_{\varphi_I}& & P_n(X,\beta) \ar[dl]^{\varphi_P}\\
& SS_n(X,\beta) 
}\eeq  
for the wall crossing morphisms to $SS_n(X, \beta):=
\mathcal{N}/\hspace{-3pt}/_{\mathcal{L}_{t^*}}G$.
Then there is a stratification by locally closed subschemes
\beq{strata} 
SS_n(X,\beta) = \coprod^n_{k = 0} I^{pur}_{n-k}(X,\beta)\times S^k X,
\eeq 
and on closed points $\varphi_I$ is given by
\[\varphi_I([Z]) = \Big(Z^{pur}, \sum_p \operatorname{len}(\I_{Z^{pur}}/\I_Z)_{p}\,p\Big).\]
Similarly, for $(F, s) \in P_n(X,\beta)$, 
\[\varphi_P(F, s) = \Big(\operatorname{Supp}(F), \sum_p \operatorname{len}(F/
\operatorname{im}(s))_p\,p\Big).\]
\end{thm}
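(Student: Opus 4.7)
The plan is to realise both $I_n(X,\beta)$ and $P_n(X,\beta)$ as GIT quotients of a single projective parameter scheme $\mathcal{N}$ by the same reductive group $G$, differing only by the choice of $G$-linearised ample line bundle. Fix $m\gg 0$ large enough that every relevant sheaf---namely $\O_Z$ for $Z\in I_n(X,\beta)$ and $F$ for $(F,s)\in P_n(X,\beta)$---is $m$-regular with $h^0(F(m))=P_\beta(m)$; boundedness of the two families ensures that such $m$ exists. Setting $V=\C^{P_\beta(m)}$, take $\mathcal{N}$ to be (a projective closure of) the locally closed subscheme of
\[
\quot\bigl(V\otimes\O_X(-m)\oplus\O_X,\,P_\beta\bigr)
\]
parametrising quotients $(q_1,q_2)\colon V\otimes\O_X(-m)\oplus\O_X\twoheadrightarrow F$ for which $q_1$ is surjective and induces an isomorphism $V\xrightarrow{\sim}H^0(F(m))$; here $q_2\colon\O_X\to F$ plays the role of a ``section''. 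The group $G=\Sl(V)$ acts naturally on $V$, and a family of linearisations $\mathcal{L}_t$, $t\in[0,1]\cap\Q$, arises from the Grothendieck--Pl\"ucker embedding at some degree $l\gg m$, with $t$ redistributing weight between the two summands of $V\otimes\O_X(-m)\oplus\O_X$.

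The heart of the proof is a Hilbert--Mumford calculation. A one-parameter subgroup $\lambda\subset\Sl(V)$ produces from the weight decomposition of $V$ a weighted filtration $0\subset F^{(1)}\subset F^{(2)}\subset\cdots\subset F$ together with a level index marking the smallest step $F^{(i)}$ containing $\im(q_2)$. The Pl\"ucker weight under $\lambda$ at linearisation $\mathcal{L}_t$ is an affine-linear function of $t$ whose coefficients package Hilbert polynomial inequalities against a section penalty, so the numerical criterion becomes a $t$-weighted comparison of sub-sheaves against the whole. For $t$ near one endpoint this forces $q_2$ to be surjective, so $F=\O_Z$ for a subscheme $Z$ and one identifies $\mathcal{N}/\hspace{-3pt}/_{\mathcal{L}_0}G\cong I_n(X,\beta)$ by the universal property of the Hilbert scheme. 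For $t$ near the other endpoint it forces $F$ to be pure and $\coker(q_2)$ to be zero-dimensional, i.e.\ $(F,q_2)$ is a stable pair in the sense of Proposition~\ref{stablepairs}, and a comparison with Le Potier's construction gives $\mathcal{N}/\hspace{-3pt}/_{\mathcal{L}_1}G\cong P_n(X,\beta)$.

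To exhibit a single wall, observe that the $t$-dependence of every destabilising inequality arises through the same linear functional in $t$, so all walls collapse to one rational $t^*\in(0,1)\cap\Q$. At $t^*$ the polystable orbits are precisely the direct sums $(F^{pur},0)\oplus(\O_Z,\O_X\twoheadrightarrow\O_Z)$ where $F^{pur}$ is pure Cohen--Macaulay of Euler characteristic $n-k$ and $Z$ is zero-dimensional of length $k$ (any section necessarily concentrates in the zero-dimensional summand at $t=t^*$). Since S-equivalence classes of length-$k$ zero-dimensional sheaves are parametrised by $S^kX$, this yields the stratification \eqref{strata}. The explicit formulas for $\varphi_I$ and $\varphi_P$ then follow by reading off the degenerations to $t^*$: the canonical exact sequence $0\to\I_{Z^{pur}}/\I_Z\to\O_Z\to\O_{Z^{pur}}\to 0$ splits an ideal sheaf into its Cohen--Macaulay part and its 0-dimensional tail, contributing the cycle $\sum_p\operatorname{len}(\I_{Z^{pur}}/\I_Z)_p\,p$; and dually a stable pair $(F,s)$ splits as $(\im(s),s)\oplus\coker(s)$, whose 0-dimensional tail contributes $\sum_p\operatorname{len}(F/\im(s))_p\,p$.

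The main obstacle will be this Hilbert--Mumford bookkeeping: faithfully translating numerical (semi)stability for arbitrary one-parameter subgroups of $\Sl(V)$ into the intrinsic sub-object conditions above, and verifying both that a single wall occurs at one specific rational $t^*$ and that semistability coincides with stability for all other $t\in[0,1]\cap\Q$. One expects the low degree of $P_\beta(m)=rm+n$ (coming from $d=\dim F=1$) to be crucial here in preventing the appearance of further intermediate walls.
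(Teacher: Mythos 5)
Your overall strategy --- one master space, a ray of linearisations for a fixed reductive group, Hilbert--Mumford analysis at the two endpoints, and a single wall whose polystable orbits give the stratification --- is the same as the paper's, but two steps, as written, do not work. The first is the existence of the family $\mathcal{L}_t$ itself. You take $G=\Sl(V)$ acting on (a closure inside) the single Quot scheme $\quot\bigl(V\otimes\O_X(-m)\oplus\O_X,P_\beta\bigr)$ and ask the Grothendieck--Pl\"ucker linearisation to ``redistribute weight between the two summands''. It cannot: $\Sl(V)$ has no nontrivial characters and acts with weight zero on the summand $\O_X$ (hence on $H^0(\O_X(l))$ inside the Pl\"ucker embedding), so for each $l$ the linearisation is essentially unique and contains no adjustable parameter weighing the section against the sheaf; varying $l$ does not play this role either. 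To get a genuine parameter you must either enlarge the group by a $\C^*$ scaling the section and vary a (fractional) character, or --- as the paper does, following Le Potier --- record the section in a separate projective factor: $\mathcal{N}$ is the closure of $\bp|_T$ inside $\PP(V\otimes R_m^*)\times\bq$, with linearisations $\O_{\PP}(l)\boxtimes\iota_l^*\O_{\Gr}(1)^{\otimes c_i}$, $0<c_0<\frac1r<c_1$. The choice of this particular compactification (rather than the natural projective bundle $\bp$ over $\bq$) is exactly what makes the linearisations ample with the weight $c$ bounded independently of $l$, which the Hilbert--Mumford estimates require.

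The second gap is your description of the polystable orbits on the wall: you put the section on the zero-dimensional summand, $(F^{pur},0)\oplus(\O_Z,1)$. In fact a point whose section factors through a $0$-dimensional subsheaf is destabilised for \emph{every} $t$ (it violates the semistability inequality for both $\mathcal{L}_0$ and $\mathcal{L}_1$, hence at $t^*$ by convexity), and for $k=0$ your recipe would even produce the zero section, which does not occur in $\mathcal{N}$ at all. The correct representatives, obtained by degenerating along one-parameter subgroups that split off one point at a time, are $(\overline{s}\oplus 0,\ \O_C\oplus_p\O_p^{\oplus\ell_p})$: the canonical section lives on the Cohen--Macaulay summand and is zero on the point sheaves. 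This is not cosmetic, since the formulas for $\varphi_I$ and $\varphi_P$ are read off from these representatives. Finally, the collapse of all walls to a single $t^*$ is asserted (``the same linear functional in $t$'') but is precisely the computation that must be done: one checks that the two distinct destabilising configurations --- $F'$ zero-dimensional with $\dim\Phi'=0$, and $F'$ of full multiplicity $r'=r$ with $\dim\Phi'=1$ --- yield the same critical ratio $\mu^0(V')/\mu^1(V')$, independent of $V'$, which is where the special form $P_\beta(m)=rm+n$ really enters.
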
  
\begin{rmk} When $X$ is the (projective over an affine) threefold given by a small resolution of the threefold ordinary double point $\{xy = zw\} \subset \C^4$ the same result has been proved by Nagao and Nakajima \cite{nagnak} using moduli of perverse sheaves.
\end{rmk}

\begin{exa} \emph{Rigid curve.} Let $C \subset X$ be a smooth rigid curve in the class $\beta$.
Then the Hilbert scheme $I_{1 + \chi(\O_C)}(X,\beta)$ contains an irreducible component $I_1(X, C) \cong \operatorname{Bl}_C X$. Similarly the space of stable pairs $P_{1+\chi(\O_C)}(X, \beta)$ contains a component $P_1(X, C) \cong C$, and $SS_1(X,\beta)$ contains a component $SS_1(X, C) \cong X$. Restricting to these components the triangle \eqref{triangle} becomes 
\[
\spreaddiagramcolumns{-1pc}
\spreaddiagramrows{-1pc}
\xymatrix{
\operatorname{Bl}_C \ar[dr]_{\pi} X & & C \ar@{^{(}->}[dl]^{i}\\
& X
}
\]   
where $\pi$, $i$ are respectively the blow up and the inclusion. This may look odd to the reader familiar with \cite{dolga}, \cite{thad} as $i$ is not birational. This happens because in this case the space  $\mathcal{N}$ of Definition \ref{wallcross} is not connected. At least locally near $C$ it has two connected components which can be identified respectively with pairs with onto section and pairs with pure support, as we explain below. Each of the two GIT quotients selects only one component.

We will see that the relevant $\mathcal{N}$ for us contains two distinguished closed subschemes $\mathcal{N}_I,\,\mathcal N_P$, given by the closure of subschemes parametrising pairs with surjective section and with pure support, respectively. For simplicity assume that $C$ is the only curve in the class $\beta$. Then in our case $\mathcal{N}_I$ consists of structure sheaves of subschemes given by the union of $C$ plus a point (in $X\take C$, or an embedded point along $C$) plus the obvious section,
while $\mathcal{N}_P$ consists of degree 1 line bundles plus section defined by a point on $C$. Therefore their intersection is empty. (This is not the general case; when the
curve $C$ moves and singularities develop its genus can change, bubbling off points.)
\end{exa}

\begin{exa} \emph{Surfaces.} When $X$ is a surface the triangle \eqref{triangle} takes
a special form. Divisors on a smooth algebraic surface are Gorenstein curves, and stable pairs supported on a Gorenstein curve $C$ are in one-to-one correspondence with closed subschemes of $C$ by sending
a pair $\O_C \to F$ to the ideal sheaf $F^*\to\O_C$. For a proof see \cite[Proposition B.5]{PT3}.

For a given degree $\beta$ there exists a unique $g$ (given by the adjunction
formula $2g-2 = \beta\cdot(K_X + \beta)$) such that $I_{1-g}(X, \beta)$ is the moduli
space of \emph{divisors} in the class $\beta$, with a universal divisor $\mathscr{D}$.
Let $\Hilb^n(\mathscr{D}/I_{1-g}(X, \beta))$ denote the relative Hilbert scheme of
points on the fibres of $\mathscr D\to I_{1-g}(X, \beta)$. By the result mentioned above,
for any $n \geq 0$ there is a one-to-one correspondence 
\[P_{1-g + n}(X, \beta) \to \Hilb^n(\mathscr{D}/I_{1-g}(X, \beta))\] 
which is in fact an isomorphism of schemes (\cite[Proposition B.8]{PT3}).
The triangle \eqref{triangle} then becomes
\[
\xymatrix{
I_{1-g+n}(X, \beta)\ \cong\ I_{1-g}(X, \beta)\times\Hilb^n(X) \hspace{-4cm}
\ar[dr]_{\mu} & & \Hilb^n(\mathscr{D}/I_{1-g}(X, \beta)) \ar[dl]\\
& I_{1-g}(X,\beta) \times S^n X.
}
\]
Here we are decomposing the Hilbert scheme of curves $I_{1-g+n}(X, \beta)$ on a surface into a divisorial part $I_{1-g}(X, \beta)$ and a punctual part $\Hilb^n(X)$ -- see for
example \cite{fog}. Then $\mu\colon I_{1-g+n}(X, \beta) \to I_{1-g}(X,\beta) \times S^n
X$ is the Hilbert to Chow morphism $\Hilb^n(X)\to S^n X$ times by the identity on
$I_{1-g}(X, \beta)$.
\end{exa}

We start by constructing the objects of Theorem \ref{gitwc}. We build on \cite[Chapter 4]{lepot}, but taking advantage of the very special case $d = \deg(P_\beta) = 1$. In brief, the master space $\mathcal{N}$ is going to be a closed subscheme of a $\quot$ scheme times a projective space. Then a suitable choice of linearisation will pick out either stable pairs or structure sheaves of $1$-dimensional subschemes.

\subsection{$\mathcal{N}$ and $G$}
There is a general boundedness result for the family of isomorphism classes of $Q$-semistable coherent systems $(\Gamma, F)$ with prescribed Hilbert polynomial $P_F$, \cite[Theorem 4.11]{lepot}. In fact we will only use the following easier result. 
\begin{lem}\label{boundedness} The set of isomorphism classes of sheaves underlying isomorphism classes of stable pairs $[s\!: \O_X \to F]$ with $P_F = P_\beta$ is bounded.
\end{lem}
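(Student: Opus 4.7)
The plan is to exhibit a single integer $m_0$ such that for every stable pair $[s\colon\O_X\to F]$ with $P_F=P_\beta$, the twist $F(m_0)$ is globally generated with $H^{>0}(F(m_0))=0$. Once this is established, each such $F$ is a quotient of the fixed sheaf $\O_X(-m_0)^{\oplus P_\beta(m_0)}$, so the set of isomorphism classes of underlying sheaves embeds into the projective scheme $\quot(\O_X(-m_0)^{\oplus P_\beta(m_0)},P_\beta)$, which is of finite type, and boundedness follows.

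To construct $m_0$ I would first invoke Proposition \ref{stablepairs} to produce the short exact sequence
\[
0\to\O_C\to F\to Q\to 0,
\]
where $C=\operatorname{Supp}(F)$ is a Cohen-Macaulay curve of class $\beta$ and $Q=\operatorname{coker}(s)$ is zero-dimensional of length $\ell=n-\chi(\O_C)$. From $\ell\geq 0$ we read off the upper bound $\chi(\O_C)\leq n$, while a standard Castelnuovo--Mumford type bound on arithmetic genera of pure one-dimensional subschemes of class $\beta$ supplies a lower bound $\chi(\O_C)\geq\chi_{\min}$ depending only on $(X,\O_X(1),\beta)$. Consequently the set of possible $\O_C$'s lies in the finite union $\coprod_{\chi_{\min}\leq\chi\leq n}\Hilb^{rm+\chi}(X)$ of Hilbert schemes (restricted to their open loci of pure subschemes), hence $\{\O_C\}$ is a bounded family.

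Uniform Castelnuovo--Mumford regularity then produces an $m_0$ with $\O_C(m)$ globally generated and $H^{>0}(\O_C(m))=0$ for all $m\geq m_0$ and all $C$ in the family. Twisting the exact sequence by $\O_X(m)$ (noting $Q(m)=Q$ since $\dim Q=0$) and taking cohomology yields $H^{>0}(F(m))=0$ together with a surjection $H^0(F(m))\twoheadrightarrow H^0(Q)=Q$. Combined with global generation of $\O_C(m)\subset F(m)$, a stalkwise diagram chase forces $F(m)$ itself to be globally generated, and then $h^0(F(m_0))=\chi(F(m_0))=P_\beta(m_0)$ is the desired uniform bound.

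The main obstacle is the lower bound $\chi(\O_C)\geq\chi_{\min}$: the upper bound is immediate from the stable pair condition, but preventing $C$ from drifting off to Hilbert schemes with arbitrarily negative Euler characteristic requires a genuine Castelnuovo-type input bounding the arithmetic genera of pure curves in a fixed class. This is exactly what lets us bypass Le Potier's full boundedness result \cite[Theorem 4.11]{lepot} for general coherent systems and keep the argument within the Hilbert-scheme geometry of one-dimensional subschemes of $X$.
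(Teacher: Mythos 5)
Your argument is correct and is essentially the paper's proof: bound the family of supporting Cohen--Macaulay curves $C$ (finitely many Hilbert schemes, since the arithmetic genus is bounded in class $\beta$ and $\chi(\O_C)\leq n$), get a uniform regularity bound for the $\O_C$, and transfer it to $F$ through the sequence $0\to\O_C\to F\to Q\to 0$ with $Q$ zero-dimensional. The only cosmetic difference is that you spell out global generation of $F(m_0)$ and the resulting Quot-scheme embedding, where the paper simply deduces uniform regularity of $F$ and cites the cohomological characterisation of boundedness \cite[Lemma 1.7.6]{hl}.
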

\begin{proof} The set of (the structure sheaves of) Cohen-Macaulay curves $C$ supporting stable pairs of Hilbert polynomial $P_\beta$ is bounded, since they all lie in the union of a finite number of Hilbert schemes of curves (since arithmetic genus is bounded on
curves in the class $\beta$). By the cohomological characterisation of boundedness (see for instance \cite[Lemma 1.7.6]{hl}) this gives a uniform $m$ such that 
\[H^i(X, \O_C(m - i)) = 0\]
for $i > 0$ (recall that the opposite also holds, i.e. a uniform bound on the regularity implies boundedness when the family of Hilbert polynomials is finite). Let the extension
\[0 \to \O_C \to F \to Q \to 0\]
define a sheaf $F$ underlying a stable pair. Since $H^i(Q(k)) = 0$ for $i > 0$ and all $k$, we get a surjection
\[H^i(\O_C(k)) \to H^i(F(k)) \to 0\]
for $i > 0$ and all $k$. Choosing $k = m - i$ gives $H^i(F(m-i)) = 0$ and
therefore a uniform bound on the regularity of $F$. As we recalled this implies boundedness for the family of all $F$s.
\end{proof}

Fix a positive integer $m$ and let $V$ be a fixed vector space of dimension $P_\beta(m)$. We form the rank $P_\beta(m)$ locally free sheaf on $X$
\[\mathcal{H} = V \otimes \O_X(-m).\]  
Consider the Quot scheme of quotients of $\mathcal{H}$ with Hilbert polynomial $P_\beta$,
\[\bq = \quot(\mathcal{H}, P_\beta).\]
By Lemma \ref{boundedness} we can choose $m \gg 0$ such that, for any stable pair $[s\!: \O_X \to F]$ with $P_F = P_\beta$, the sheaf $F(m)$ is globally generated and $H^i(F(m)) = 0$ for $i > 0$. Therefore the isomorphism class of $F$ corresponds to a closed point of $\bq$ for $m \gg 0$, and this is unique modulo the natural action of $\Sl(V)$ on $\bq$. In other words there is a surjective map
\[H^0(F(m))\otimes\O_X(-m)\to F \to 0\]
and we can pick an isomorphism $H^0(F(m))\cong V$. Indeed the group $G$ that occurs in Theorem \ref{gitwc} is $\Sl(V)$ for $m \gg 0$.

Let us now construct $\mathcal{N}$. Let $R_m$ denote the $m$-th graded piece of the graded ring of $X$, 
\[R_m = H^0(\O_X(m)).\]
\begin{lem} For any sheaf $F$ and any positive integer $m$ there is a canonical injection
\[F \subset F(m)\otimes R^*_m.\]
\end{lem}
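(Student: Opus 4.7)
The plan is to produce the injection by a short functorial construction starting from global generation of $\O_X(m)$, and then verify injectivity either via local splitting or pointwise on stalks.

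First I would recall that since $\O_X(1)$ is very ample, $\O_X(m)$ is globally generated, so the evaluation map
\[
R_m \otimes \O_X \longrightarrow \O_X(m)
\]
is surjective. Tensoring with $F$ gives a canonical surjection $R_m \otimes F \to F(m)$, which under adjunction becomes a canonical map
\[
F \longrightarrow \hhom(R_m,F(m)) = R_m^* \otimes F(m).
\]
This map is clearly functorial in $F$, so it is the ``canonical'' map named in the statement.

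Next I would show injectivity. The cleanest argument is local: by global generation of $\O_X(m)$, for any $x \in X$ one can pick $s \in R_m$ with $s(x) \neq 0$ in $\O_X(m)_x$. Since $\O_X(m)$ is a line bundle, $s$ is then a generator of the rank-one free $\O_{X,x}$-module $\O_X(m)_x$, so the contraction map $R_m^* \otimes F(m)_x \to F(m)_x$ given by evaluation at $s$ identifies $F(m)_x$ with a direct summand of $R_m^* \otimes F(m)_x$, and the composition $F_x \to R_m^* \otimes F(m)_x \to F(m)_x$ sends $\eta \mapsto \eta \otimes s$. Under the identification $F(m)_x \cong F_x$ coming from the generator $s$, this is just the identity on $F_x$. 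Hence $F_x \to (R_m^* \otimes F(m))_x$ is (split) injective at every point, and therefore $F \to R_m^* \otimes F(m)$ is injective.

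Alternatively, one could twist the surjection $R_m \otimes \O_X \twoheadrightarrow \O_X(m)$ by $\O_X(-m)$ to obtain a surjection $R_m \otimes \O_X(-m) \twoheadrightarrow \O_X$ of locally free sheaves onto a line bundle. Such a surjection is locally split with locally free kernel, so dualising yields a locally split injection $\O_X \hookrightarrow R_m^* \otimes \O_X(m)$ with locally free cokernel, and tensoring with $F$ preserves injectivity. There is no real obstacle here, but the only point requiring a moment's care is the passage from the surjection $R_m \otimes F \twoheadrightarrow F(m)$ to injectivity of its adjoint, which is why the pointwise argument via a local generator $s$ of $\O_X(m)$ is the cleanest finish.
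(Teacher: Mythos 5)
Your proof is correct and is essentially the paper's argument: you construct the same canonical map by adjunction from multiplication by sections of $R_m$, and deduce injectivity from global generation of $\O_X(m)$, merely spelling out at the level of stalks the step the paper leaves implicit.
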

\begin{proof} The morphism $F \to F(m)\otimes R^*_m$ in question is the element of 
\[\Hom(F, F(m)\otimes R^*_m)\cong \Hom(R_m, \Hom(F, F(m)))\]
given by tensoring $s \in R_m \cong \Hom(\O_X, \O_X(m))$ by $F$. This induces an injection $F \subset F(m)\otimes R^*_m$ because $\O_X(m)$ is globally generated.
\end{proof}

Thus there is an induced inclusion
\[H^0(F)\subset H^0(F(m)) \otimes R^*_m.\]
Let $\Gamma$ denote the subspace of $H^0(F)$ generated by $s$ (in this section we sometimes write $(\Gamma, F)$ for a pair $(F, s)$). By the above discussion $\Gamma$ 
can be regarded as a point of the projective space $\PP(V \otimes R^*_m)$ of lines in
$V \otimes R^*_m$ (uniquely up to the action of $\Sl(V)$).

The upshot of this is that isomorphism classes of pairs $[s\!:\O_X \to F]$ with $P_F = P_\beta$ are realised as particular geometric points of 
\[\bb = \PP(V \otimes R^*_m) \times \bq,\]
modulo the action of $\Sl(V)$ (and this includes structure sheaves of subschemes with their canonical section). 

We spell out the conditions that a closed point $(\Phi, F) \in \bb$ must satisfy in order to arise from a pair $(\Gamma, F)$ under the correspondence above:
\begin{enumerate}
\item[$\bullet$] the natural map $V \to H^0(F(m))$ induced by $V\otimes\O
_X(-m) \to F$ must be an isomorphism;
\item[$\bullet$] the subspace $\Phi$ of $V \otimes R^*_m$ must correspond to a subspace $\Gamma$ of $H^0(F)$;
\item[$\bullet$] the resulting pair $(\Gamma, F)$ must be stable.
\end{enumerate}
The scheme $\bb$ is too large for the second condition to arise from GIT stability. Instead our scheme $\mathcal{N}$ will be a suitable closed subscheme of $\bb$, as we now explain.

The product $X \times \bq$ (with projections $p, q$ to $X, \bq$ respectively) carries a universal quotient sheaf $\mathscr{F}$. Lemma \ref{boundedness} implies that for $m \gg 0$ all stable pairs with $P_F = P_\beta$ correspond to geometric points (unique up to the natural action of $\Sl(V)$) of the projective bundle
\[\bp := \Proj\oplus_kS^k\big(\eext^D_q(\mathscr{F}, p^*\omega_X)\big)\]
of rank one locally free quotients of $\eext^D_q(\mathscr{F}, p^*\omega_X)$ over $\bq$. 

The evaluation map $V\otimes\O_{X\times \bq}\to \mathscr{F}(m)$ induces $V\otimes\O_{\bq}\to
q_*(\mathscr{F}(m))$. By relative Serre duality for the morphism $q$, its
dual is a map
\[\eext^D_q(\mathscr{F}(m), p^*\omega_X) \to V^*\otimes \O_{\bq}.\]
Let $T \subset \bq$ denote the open subscheme where this morphism
is surjective; by duality this is precisely the open subscheme where the natural map $V \to H^0(F(m))$ is an isomorphism.
The surjective morphisms of sheaves over $T$
\[
\spreaddiagramrows{-.5pc}
\xymatrix{
& \hspace{-15mm}\eext^D_q(\mathscr{F}(m), p^*\omega_X)|_T\otimes R_m\hspace{-15mm}
\ar[dl]\ar[dr] &\\
V^*\otimes R_m\otimes\O_{T} & &\eext^D_q(\mathscr{F}, p^*\omega_X)|_T
}
\]
induce, for $m\gg0$, $\Sl(V)$-equivariant closed immersions over $T$
\beq{immersions}
\spreaddiagramrows{-.5pc}
\xymatrix{
\PP(V \otimes R^*_m) \times T \ar[dr]& & \bp|_T\ar[dl]\\
& \hspace{-15mm}\PP\big(q_*(\mathscr{F}(m)) \otimes R_m^*\big)|_T\,.
\hspace{-15mm}}
\eeq
By definition of $T$ the left arrow is in fact an isomorphism, so we regard $\bp|_T$ as a closed subscheme of $\PP(V \otimes R^*_m) \times T$, and we can rewrite the above conditions on a closed point $(\Phi, F) \in \PP(V \otimes R^*_m)\times \bq $ in a different form:
\begin{enumerate}
\item[$\bullet$] $F \in T$;
\item[$\bullet$] $(\Phi, F) \in \PP(V \otimes R^*_m) \times T$ lies in $\bp|_T$;
\item[$\bullet$] $(\Phi, F)$ is stable.
\end{enumerate}
We will see that these conditions do arise from GIT stability in the following space.
\begin{defn}\label{deff}
The space $\mathcal{N}$ is the scheme-theoretic closure of the locally closed subscheme
\[\bp|_T \subset \PP(V \otimes R^*_m) \times T \subset \bb.\]
\end{defn}
\begin{lem} For $m \gg 0$ closed subschemes $Z \subset X$ with Hilbert polynomial $P_Z = P_\beta$ also correspond to closed points of $\mathcal{N}$, unique up to the action of $\Sl(V)$.  
\end{lem}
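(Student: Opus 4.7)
The plan is to carry out the same construction as for stable pairs, applied to the canonical ``generalized pair'' $(\O_Z, 1_Z)$, where $1_Z \in H^0(\O_Z)$ is the section coming from the structure quotient $\O_X \twoheadrightarrow \O_Z$. This pair has zero cokernel (so in particular $0$-dimensional cokernel) but is not a stable pair in general, because $\O_Z$ need not be pure when $Z$ has embedded or lower-dimensional components; however stability plays no role, since we only need to exhibit $(\O_Z, 1_Z)$ as a point of $\bp|_T$. The first step is boundedness of the family $\{\O_Z : Z \subset X \text{ closed}, P_Z = P_\beta\}$, which is immediate from the projectivity of the Hilbert scheme $I_n(X, \beta)$. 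The cohomological criterion for boundedness then yields an integer $m$, which we may assume simultaneously works for the stable pairs of Lemma \ref{boundedness}, such that for every such $Z$ the sheaf $\O_Z(m)$ is globally generated, $H^i(\O_Z(m)) = 0$ for $i > 0$, and $\dim H^0(\O_Z(m)) = P_\beta(m) = \dim V$.

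Next, pick an isomorphism $V \cong H^0(\O_Z(m))$. The induced surjection $V \otimes \O_X(-m) \to \O_Z$ defines a point $[\O_Z] \in \bq$ lying in the open subscheme $T$ by construction. The canonical line $\langle 1_Z \rangle \subset H^0(\O_Z)$, embedded into $V \otimes R^*_m$ via the canonical inclusion $H^0(F) \subset H^0(F(m)) \otimes R^*_m$ (applied to $F = \O_Z$) established in the previous lemma, singles out a point $\Phi \in \PP(V \otimes R^*_m)$. By the description of the closed immersion \eqref{immersions}, the image of $\bp|_T$ in $\PP(V \otimes R^*_m) \times T$ consists precisely of pairs $(\Phi, F)$ for which $\Phi$ corresponds to an actual section of $F$; hence $(\Phi, [\O_Z]) \in \bp|_T \subset \mathcal{N}$.

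For uniqueness, the only ambiguity lies in the identification $V \cong H^0(\O_Z(m))$, which is a $\mathrm{GL}(V)$-torsor. The central $\C^*$ acts trivially on $\bq$ (rescaling a quotient does not change its kernel) and on the projective space $\PP(V \otimes R^*_m)$, so the action of $\mathrm{GL}(V)$ factors through $\mathrm{PGL}(V)$; since $\Sl(V)$ surjects onto $\mathrm{PGL}(V)$ with finite central kernel, the orbit of our point under $\Sl(V)$ is well defined and depends only on $Z$. Conversely, $(\O_Z, 1_Z)$ recovers $Z$ as the subscheme defined by $\ker(1_Z) \subset \O_X$, so distinct $Z$ produce distinct $\Sl(V)$-orbits. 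The only step requiring any care is the boundedness of $\{\O_Z\}$, which is automatic; the remainder is formal and parallels the construction for stable pairs, with the observation that the condition defining $\bp|_T$ inside $\PP(V \otimes R^*_m)\times T$ is precisely ``$\Phi$ comes from a genuine section'', and no stability input is invoked.
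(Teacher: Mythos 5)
Your proof is correct and is essentially the paper's own argument, which simply notes that one enlarges the bounded family of stable pairs to include the structure sheaves $\O_Z$ with their canonical section $1\colon\O_X\to\O_Z$ and re-runs the construction of $\bp|_T\subset\mathcal{N}$, with uniqueness coming from the choice of marking $V\cong H^0(\O_Z(m))$. You have merely spelled out the details (boundedness via the Hilbert scheme, the uniform choice of $m$, the point of $\bp|_T$ defined by the canonical section, and the passage from $\mathrm{GL}(V)$ to $\Sl(V)$), all of which are fine.
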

\begin{proof} We only need to replace the set of isomorphism classes of stable pairs with $P_F = P_\beta$ with its union with the set of structure sheaves $\O_Z$ with their canonical section $1\!: \O_X \to \O_Z$ such that $P_Z = P_\beta$.
\end{proof}

The reason why we compactify $\bp|_T$ by embedding into $\bb$ and taking the closure rather than working directly with $\bp$ will be explained below.

\subsection{$\mathcal{L}_0$ and $\mathcal{L}_1$}
For $l \gg m$, the Quot scheme $\bq$ admits the familiar Grothendieck embedding 
\[\iota_l\!:\bq \into \Gr(V\otimes H^0(\O_X(l-m)), P_\beta(l))\]
into the Grassmannian of $P_\beta(l)$ dimensional quotients.
Pulling back the Pl\"ucker line bundle on $\Gr(V\otimes H^0(\O_X(l-m)), P_\beta(l))$ (the very ample generator of the Picard group) gives a very ample line bundle on $\bq$ which we denote by $\iota^*_l\O_{\Gr}(1)$. We also write $\O_{\PP}(1)$ for the hyperplane bundle of $\PP(V \otimes R^*_m)$.
\begin{defn} Let $c_0, c_1$ be rational numbers with 
\[0 < c_0 < \frac{1}{r} < c_1.\]
Then, for $i = 0, 1$ and $l \gg m$, define ample $\mathbb{Q}$-linearisations for the action of $\Sl(V)$ on $\mathcal{N}$ by 
\[\mathcal{L}_i := \O_{\PP}(l)\boxtimes\iota^*_l\O_{\Gr}(1)^{\otimes c_i}|_{\mathcal{N}}.\]
\end{defn}
\begin{rmk} The relative tautological bundle $\mathscr{U}$ on $\bp$ is relatively ample, so the linearisation $\mathscr{U}(l) \otimes \O_{\Gr}(1)^{\otimes c}$ is ample on $\bp$ for $c = c(l) \gg 0$. However for GIT we need to work with a fixed $c$, independent of $l$. This is why we needed to find a different compactification of $\bp|_T$ where ampleness of $\mathcal{L}_i$, $i = 0, 1$ is evident. This approach is due to Le Potier, \cite[Section 4.8]{lepot}.  
\end{rmk}

\subsection{Application of the Hilbert-Mumford criterion}
Since $\mathcal{N}$ is a projective scheme and the linearisations $\mathcal{L}_i$, $i = 0, 1$ are ample we can apply the Hilbert-Mumford criterion to find the (semi)stable points for the action. Any $1$-parameter subgroup $\lambda\!: \C^* \into \Sl(V)$ is uniquely determined by the decomposition into weight spaces $V = \bigoplus_{k \in \mathbb{Z}} V_{k}$. For any quotient 
\[\rho\!: V\otimes\O_X(-m) \to F \to 0\]
with $P_F = P_\beta$, define increasing filtrations of $V$, $F$ respectively by 
\begin{align*}
V_{\leq k} &= \sum_{j \leq k} V_j,\\
F_{\leq k} &= \rho(V_{\leq k}\otimes\O_X(-m)).
\end{align*}
For $\Phi \in \PP(V \otimes R^*_m)$ we set  
\[\Phi_{\leq k} = \Phi \cap (V_{\leq k} \otimes R^*_m)\]
and
\[\Phi_{k} = \Phi_{\leq k}/ \Phi_{\leq k-1}.\]
(Of course $\dim(\Phi_{\leq k})$ can only jump once!)
The quotient objects for the filtration of $F$ are
\[F_k = F_{\leq k} / F_{\leq k-1}\]
so we find onto maps
\[\rho_k\!: V_k \otimes\O_X(-m)\to F_k \to 0\]
and inclusions
\[\Phi_k \subset V_k \otimes R^*_m.\]
\begin{lem}\label{hm} Let $(\Phi, F)$ be a closed point of $\mathcal{N}$. Then
\[\lim_{t \to 0}\lambda(t)\cdot(\Phi, F) = (\oplus_k \Phi_k, \oplus_k F_k)\]
and the Hilbert-Mumford weight $\mu^{\mathcal{L}_i}((\Phi, F),\lambda)$ is given by the quantity
\[\frac{1}{\dim(V)}\sum_k \Big(\dim(V)\big(c_i P_{F_{\leq k}}(l)-\dim(\Phi_{\leq k})l\big) - \dim(V_{\leq k})\big(c_i P_F(l)-l\big)\Big).\]
\end{lem}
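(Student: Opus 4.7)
The plan is to compute directly the two claims of the lemma. Both amount to standard facts about $\C^*$-actions on Quot schemes and projective spaces; the only nontrivial step is re-packaging the answer into the cohomological form stated, which is where Abel summation enters.

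For the limit assertion I would argue componentwise on $\bb = \PP(V \otimes R^*_m) \times \bq$. On the projective factor, pick a nonzero $v \in \Phi$ and expand it in the weight decomposition $V \otimes R^*_m = \oplus_k V_k \otimes R^*_m$ as $v = \sum_k v_k$. Setting $k_0 := \min\{k : v_k \neq 0\}$, the orbit $\lambda(t) \cdot v = \sum_k t^k v_k$ rescales in projective space to $v_{k_0} + O(t)$, so its limit is the line $\Phi_{\leq k_0}/\Phi_{\leq k_0 - 1} = \Phi_{k_0}$; since this is the unique nonzero graded piece of $\Phi$, it equals $\oplus_k \Phi_k$. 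On $\bq$, the action of $\lambda$ sends $[\rho\colon V \otimes \O_X(-m) \to F]$ to $[\rho \circ \lambda(t)^{-1}]$, and a standard Rees-module construction applied to the filtration $F_{\leq k} = \rho(V_{\leq k} \otimes \O_X(-m))$ produces a flat family over $\C$ whose generic fibre is $F$ and whose central fibre is the associated graded $\oplus_k F_k$. By $\Sl(V)$-invariance and closedness of $\mathcal{N}$ in $\bb$ the limit lies in $\mathcal{N}$, so $(\oplus_k \Phi_k, \oplus_k F_k)$ is genuinely a point there.

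For the Hilbert--Mumford weight I would use additivity, $\mu^{\mathcal{L}_i} = \mu^{\O_{\PP}(l)} + c_i\, \mu^{\iota_l^* \O_{\Gr}(1)}$, and compute each summand as minus the weight of $\lambda$ on the fibre of the line bundle at the limit point (the standard HM sign convention). The fibre of $\O_{\PP}(-1)$ at $\oplus_k \Phi_k$ is the line spanned by $v_{k_0}$ inside $V_{k_0} \otimes R^*_m$, on which $\lambda$ has weight $k_0$, so $\mu^{\O_{\PP}(l)} = l\, k_0$. The Pl\"ucker bundle pulls back to $\det H^0(F(l))$ under the Grothendieck embedding $[F] \mapsto [V \otimes H^0(\O_X(l-m)) \twoheadrightarrow H^0(F(l))]$, which is well-defined for $l \gg m$ since the limit sheaves $\oplus_k F_k$ form a bounded family. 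At the limit, $H^0(F(l)) = \oplus_k H^0(F_k(l))$ with $\lambda$-weight $k$ on the $k$-th summand (because $F_k$ is a quotient of $V_k \otimes \O_X(-m)$), giving weight $\sum_k k\, P_{F_k}(l)$ on $\det H^0(F(l))$ and hence $\mu^{\iota_l^* \O_{\Gr}(1)} = -\sum_k k\, P_{F_k}(l)$. Combining, $\mu^{\mathcal{L}_i} = l\, k_0 - c_i \sum_k k\, P_{F_k}(l)$, with $k_0 = \sum_k k\, \dim \Phi_k$.

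To match this with the expression in the statement is pure Abel summation. Since the filtrations stabilise at their maxima for large $k$ and vanish for small $k$, the three sums $\sum_k P_{F_{\leq k}}(l)$, $\sum_k \dim \Phi_{\leq k}$, $\sum_k \dim V_{\leq k}$ individually diverge but the combinations appearing in the lemma truncate to finite sums. Summation by parts, together with $\sum_k k\, \dim V_k = 0$ (since $\lambda \in \Sl(V)$), gives
\[\sum_k \Big(P_{F_{\leq k}}(l) - \tfrac{P_F(l)}{N}\dim V_{\leq k}\Big) = -\sum_k k\, P_{F_k}(l)\]
and
\[\sum_k \Big(\dim \Phi_{\leq k} - \tfrac{1}{N}\dim V_{\leq k}\Big) = -k_0,\]
where $N = \dim V$. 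Substituting these into the right hand side of the lemma yields exactly $l\, k_0 - c_i \sum_k k\, P_{F_k}(l)$, matching the HM computation. The only real obstacle is bookkeeping: carefully tracking the $\Sl(V)$-normalisation (the $1/N$ adjustments enforcing tracelessness) and the sign convention for $\mu$, both of which are essential for the formula to come out on the nose.
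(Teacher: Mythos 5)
Your strategy (compute the limit and the Hilbert--Mumford weight directly on each factor of $\bb$, then pass to the filtration form of the formula by summation by parts) is reasonable, and the Quot-scheme half is fine: the associated-graded description of the limit in $\bq$, the formula $\mu^{\iota_l^*\O_{\Gr}(1)}=-\sum_k kP_{F_k}(l)$, and the identity $\sum_k\bigl(P_{F_{\leq k}}(l)-\tfrac{P_F(l)}{N}\dim V_{\leq k}\bigr)=-\sum_k kP_{F_k}(l)$ (using $\sum_k k\dim V_k=0$, with $N=\dim V$) are all correct; this is essentially a written-out version of what the paper does by citing Lemmas 4.4.3 and 4.4.4 of \cite{hl}.

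The gap is in the $\PP(V\otimes R^*_m)$-factor, and it is exactly the subobject-versus-quotient sign subtlety. Since $\Phi$ is a \emph{line}, spanned by $v=\sum_k v_k$, the filtration $\Phi_{\leq k}=\Phi\cap(V_{\leq k}\otimes R^*_m)$ is zero for $k<k_{\max}:=\max\{k:v_k\neq0\}$ and equals $\Phi$ from $k_{\max}$ onwards; so the unique nonzero graded piece is $\Phi_{k_{\max}}$, embedded in $V_{k_{\max}}\otimes R^*_m$ as the line through the \emph{top}-weight component $v_{k_{\max}}$. Your identification of $[v_{k_0}]$ (with $k_0$ the \emph{minimal} weight) with ``$\Phi_{k_0}$, the unique nonzero graded piece'' is false whenever $v$ has more than one weight component, and the companion identity should read $\sum_k\bigl(\dim\Phi_{\leq k}-\tfrac1N\dim V_{\leq k}\bigr)=-\sum_k k\dim\Phi_k=-k_{\max}$, not $-k_0$ (in a two-weight example with weights $\pm1$ it even has the opposite sign). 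With that correction, the right-hand side of the lemma equals $l\,k_{\max}-c_i\sum_k kP_{F_k}(l)$, which no longer agrees with your directly computed $l\,k_0-c_i\sum_k kP_{F_k}(l)$: the two halves of your argument use incompatible conventions for the $\Sl(V)$-action and the sign of $\mu$ on the two factors of $\bb$, and the final ``match'' goes through only because of the incorrect Abel-summation value $-k_0$. To repair the proof you must fix one convention compatible with the linearisations $\mathcal{L}_i$ and trace it through both factors: under the convention for which your Quot-side statements hold with the ascending filtration, the weight of $\lambda$ on the line $\Phi\subset V\otimes R^*_m$ enters with the \emph{opposite} sign to the quotient side (this is what the paper means by the $\Phi_{\leq k}$ terms carrying the opposite sign), the limit on the projective factor is $[v_{k_{\max}}]=\oplus_k\Phi_k$ as the lemma asserts, and $\mu^{\O_{\PP}(l)}=l\sum_k k\dim\Phi_k=l\,k_{\max}$, after which the corrected summation by parts does reproduce the stated formula. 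As written, however, the argument does not establish the lemma.
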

\begin{proof} The sheaf part of the statement is standard, see for instance Lemmas 4.4.3 and 4.4.4 of \cite{hl}. But the $\Phi$ part of the statement can be seen as a special case of this, regarding $\Phi_{k}$ as a subsheaf of the sheaf $V_k \otimes R^*_m$ over a point. We should only be careful about signs: when we rearrange as in \cite{hl} Lemma 4.4.4 to compute the Hilbert-Mumford weight, the sign of terms coming from $\Phi_{\leq k}$ is the \emph{opposite} of the sign of terms coming from $F_{\leq k}$, since we take ``subsheaves" (i.e. subspaces!) of the ``sheaf" $V \otimes R^*_m$, rather than quotients.
\end{proof}

\begin{rmk} By this Lemma the weight $\mu^{\mathcal{L}_i}((\Phi, F),\lambda)$ equals the constant $\dim(V)^{-1}$ multiplied by the sum of a finite number of quantities of the form
\begin{equation}\label{weight}
\mu^i(V') = \dim(V)(c_i P_{F'}(l)-\dim(\Phi')l) - \dim(V')(c_i P_F(l)-l),
\end{equation}
where $V' \subset V$, $F' \subset F$ is the subsheaf generated by $V'$, and 
\[\Phi' = \Phi\cap (V' \otimes R^*_m).\]
Conversely, for any subspace $V' \subset V$, we may define a one parameter subgroup $\lambda \in \Sl(V)$ by $\lambda(t) = t^{\dim(V')-\dim(V)}\operatorname{id}_{V'}\,\oplus\ t^{\dim(V')} \operatorname{id}_{V/V'}$, whose Hilbert-Mumford weight $\mu^{\mathcal{L}_i}((\Phi, F),\lambda)$ for $i = 0, 1$ is precisely the quantity $\mu^i(V')$ defined by \eqref{weight}. 
\end{rmk}
\begin{prop}\label{pairs_moduli} A stable pair $(F, s)$ with $P_F = P_\beta$ and a marking $H^0(F(m))\cong V$ corresponds to a closed point of $\mathcal{N}$ which is stable under the action of $\Sl(V)$ linearised on $\mathcal{L}_1$.

Conversely, the semistable locus $\mathcal{N}^{ss}(\mathcal{L}_1)$ coincides with the stable locus $\mathcal{N}^s(\mathcal{L}_1)$, and a closed point of $\mathcal{N}^s(\mathcal{L}_1)$ corresponds to a stable pair $(F, s)$ with $P_F = P_\beta$ and a marking of $H^0(F(m))$.

Therefore $\mathcal{N}^{s}(\mathcal{L}_1)$ has the good quotient $P_n(X,\beta)$ for the action of $\Sl(V)$.
\end{prop}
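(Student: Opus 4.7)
The plan is to apply the Hilbert--Mumford numerical criterion to $(\mathcal{N},\Sl(V),\mathcal{L}_1)$ using the explicit weight formula of Lemma~\ref{hm}. Since every one-parameter subgroup of $\Sl(V)$ decomposes into a sum of rank-one subgroups of the form $\lambda(t) = t^{\dim(V')-\dim(V)}\id_{V'}\oplus t^{\dim(V')}\id_{V/V'}$, the remark following Lemma~\ref{hm} reduces us to analysing the sign of the elementary weights $\mu^1(V')$ from \eqref{weight}, as $V'\subset V$ varies. First I would use Lemma~\ref{boundedness} together with a uniform $m$-regularity argument to fix $m \gg 0$ so that $V \cong H^0(F(m))$ and, for every relevant saturated subsheaf $F'\subset F$ generated by $V'\subset V$, one has $\dim V' = P_{F'}(m) = h^0(F'(m))$. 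Then I would take $l \gg m$ so that $\mu^1(V')$ is a degree-one polynomial in $l$ whose sign is controlled by the coefficient
\[A(V') = \dim(V)\bigl(c_1\, r(F')-\dim(\Phi')\bigr) - \dim(V')(c_1 r-1),\]
where $\dim(\Phi')\in\{0,1\}$ records whether the section factors through $V'\otimes R_m^*$.

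For the direction \emph{stable pair $\Rightarrow$ GIT stable}, given $(F,s)$ as in Proposition~\ref{stablepairs}, I would check that $A(V')$ has the correct sign for every proper $0\subsetneq V'\subsetneq V$, first replacing $V'$ by the span of the saturation of the subsheaf it generates (this only worsens the weight) and then recognising $A(V')$ as the reduced Hilbert polynomial comparison that Le Potier uses to characterise semistability of coherent systems, tuned so that $c_1 > 1/r$ falls in the stable-pair chamber. Strictness of the inequality follows from $c_1 \neq 1/r$, purity of $F$, and $0$-dimensionality of $\coker(s)$. For the converse, given $(\Phi,F)\in\mathcal{N}^{ss}(\mathcal{L}_1)$, I would first show that semistability forces $F\in T$, i.e.~$V\cong H^0(F(m))$, by observing that the failure to be an isomorphism manifests as a destabilising $V'$; then, testing with $V' = H^0(T'(m))$ for a $0$-dimensional subsheaf $T'\subset F$ and using $c_1 r > 1$, one forces $T'=0$, so $F$ is pure; testing against the image of the section forces $\coker(s)$ to be $0$-dimensional, recovering the hypotheses of Proposition~\ref{stablepairs}. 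Strictness of $c_1 > 1/r$ (off the wall) also yields $\mathcal{N}^{ss}(\mathcal{L}_1)=\mathcal{N}^s(\mathcal{L}_1)$.

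Finally, since stable pairs have trivial automorphism group, the GIT quotient $\mathcal{N}^s(\mathcal{L}_1)/\Sl(V)$ is a geometric quotient whose closed points are in bijection with isomorphism classes of stable pairs; a standard argument using the universal family on $\mathcal{N}^s(\mathcal{L}_1)$ identifies it with $P_n(X,\beta)$. The main obstacle I expect is the careful bookkeeping between linear-algebraic sub-objects $V'\subset V$ and sheaf-theoretic sub-objects $F'\subset F$, including the handling of non-saturated $V'$ and the uniform $m$-regularity required not just for $F$ but for the whole family of potentially destabilising subsheaves; this is the technical heart of Le Potier's treatment \cite[Section~4.8]{lepot}, which we specialise to $d = \dim\Gamma = 1$.
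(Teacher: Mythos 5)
Your strategy is essentially the paper's: Hilbert--Mumford via Lemma \ref{hm}, reduction to the elementary weights $\mu^1(V')$, leading-order analysis in $l$ split according to $\dim(\Phi')\in\{0,1\}$, the chamber condition $c_1>1/r$, and the same destabilising test objects in the converse direction (the kernel of $V\to H^0(F(m))$, a $0$-dimensional subsheaf, the subsheaf through which the section factors). But there is a concrete gap in your converse. A closed point of $\mathcal{N}\subset\bb=\PP(V\otimes R^*_m)\times\bq$ is a pair $(\Phi,F)$ in which $\Phi$ is merely a line in $V\otimes R^*_m$; nothing yet says that $\Phi$ corresponds to a section of $F$, and this condition cannot be enforced by GIT on $\bb$ -- that is precisely why $\mathcal{N}$ is defined as the closure of $\bp|_T$ rather than as $\bb$ or $\bp$ itself. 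Your step ``testing against the image of the section forces $\coker(s)$ to be $0$-dimensional'' presupposes a section $s\in H^0(F)$ attached to $\Phi$, which is exactly what has to be proved. The missing argument is: once semistability forces $V\cong H^0(F(m))$, the point lies in $\overline{\bp|_T}\cap\big(\PP(V\otimes R^*_m)\times T\big)$, and since the relative Serre-duality maps \eqref{immersions} exhibit $\bp|_T$ as a \emph{closed} subscheme of $\PP(V\otimes R^*_m)\times T$, the point actually lies in $\bp|_T$; only then is $\Phi$ an honest one-dimensional subspace $\Gamma\subset H^0(F)$ and only then do your purity and cokernel tests make sense. Without this, your converse shows at best that $F$ carries a marking, not that the semistable point is a pair.

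Two smaller technical points. The normalisation ``$\dim V'=P_{F'}(m)=h^0(F'(m))$ for every relevant saturated subsheaf'' is an over-claim: for saturated $V'$ one has $\dim V'=h^0(F'(m))$, but $h^0(F'(m))=P_{F'}(m)$ would require $h^1(F'(m))=0$ uniformly over the (unbounded) family of all subsheaves $F'\subset F$, which is not available for a fixed $m$; the paper's estimates deliberately avoid this, using only $\dim V'\le h^0(F'(m))\le h^0(F(m))$, $\chi(F')\le\chi(F)$, the finiteness of possible multiplicities $r'$, and uniformity in $m$. Likewise your reduction ``replacing $V'$ by the span of the saturation only worsens the weight'' is valid only at the level of leading coefficients in $l$ (saturating can raise $\dim\Phi'$ from $0$ to $1$ while adding only a bounded constant to $P_{F'}$); that is harmless since the whole comparison is run at leading order, but it should be stated, and in fact the paper's proof dispenses with any saturation reduction and handles arbitrary $V'$ directly.
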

\begin{proof} Let $(\Gamma, F)$ be a stable pair. By Lemma \ref{hm} (and the remark after its proof) a closed point $(\Phi, F) \in \mathcal{N}$ belongs to $\mathcal{N}^{ss}(\mathcal{L}_1)$ if and only if, for every subspace $0 \subsetneq V' \subsetneq V$, we have
\beq{ss1}
\dim(V)(c_1 P_{F'}(l)-\dim(\Phi')l) \geq \dim(V')(c_1 P_F(l)-l).
\eeq
Here $F' \subset F$ denotes the subsheaf generated by $V'$. Replacing $\geq$ by $>$
gives the corresponding statement for $\mathcal{N}^{s}(\mathcal{L}_1)$.

When $(\Phi, F)=(\Gamma, F)$ represents a stable pair, then setting $\Gamma' = \Gamma \cap H^0(F')$ in the above we claim that in fact we always have GIT stability:
\[\dim(V)(c_1 P_{F'}(l)-\dim(\Gamma')l) > \dim(V')(c_1 P_F(l)-l).
\]
Since $(\Gamma, F)$ is a stable pair, we know a priori that $F$ is pure, so $V'$ generates a $1$-dimensional subsheaf with multiplicity $r' > 0$. By the choice of $c_1$ it is enough to prove the inequality of leading order terms
\[\dim(V)(c_1 r' - \dim(\Gamma')) > \dim(V')(c_1 r - 1).\]
If $\dim(\Gamma') = 0$ then we need to prove the inequality
\[\frac{\dim(V')}{\dim(V)} < \frac{r'}{r}\left(\frac{1}{1-\frac{1}{c_1 r}}\right).\]
But 
\[\frac{r \dim(V')}{r' \dim(V)} = \frac{1 + \frac{\chi(F')}{r'm}}{1 + \frac{\chi(F)}{r m}}\,,\]
and since there are only finitely many possible $r'$ for a fixed $r$, the term is converging to $1$ as $m \to \infty$ \emph{uniformly} over all $F' \subset F$, which implies the desired inequality for a uniform $m \gg 0$. If on the other hand $\dim(\Gamma') = \dim(\Gamma)$, that is $\operatorname{im}(s)\subset F'$, the assumption that $(\Gamma, F)$ is a stable pair says that $F/F'$ is $0$-dimensional and so $r = r'$. Using this information, the inequality to prove becomes
\[\dim(V) (c_1 r - 1)  > \dim(V')(c_1 r - 1),\]
which follows from $\dim(V') < \dim(V)$ since $c_1 r > 1$.

For the converse, let us first show that for $(\Phi, F) \in \mathcal{N}^{ss}(\mathcal{L}_1)$, the canonical map $V \to H^0(F(m))$ is an isomorphism, so indeed the quotient $V \otimes \O_X(-m) \to F \to 0$ corresponds to a sheaf $F$ plus a marking of $H^0(F(m))$. We only need to prove it is injective. Let $V'$ denote its kernel, so that the subsheaf $F'$ it generates is trivial. Substituting in \eqref{ss1} we get
\[-\dim(V)\dim(\Phi')l \geq \dim(V')((c_1 r - 1)l+c_1\chi(F)).
\]
By definition $c_1 > \frac{1}{r}$, so this is impossible unless $V' = \{0\}$, as required. In particular
\[\mathcal{N}^{ss}(\mathcal{L}_1) \subset \PP(V \otimes R^*_m) \times T.\] 
By the definition of $\mathcal{N}$, $(\Phi, F)$ is a point in the closure of $\bp|_T$ inside $\bb$, which moreover by the above discussion lies in $\PP(V \otimes R^*_m) \times T$. But $\bp|_T$ is a closed subscheme of $\PP(V \otimes R^*_m) \times T$ by the diagram of closed immersions \eqref{immersions}, so $(\Phi, F)$ really lies in $\bp|_T$, i.e. $\Phi$ is some subspace $\Gamma \subset H^0(F)$ generated by a nontrivial section $s$. From now on we will therefore write $(\Gamma, F)$ in place of $(\Phi, F)$. 

It remains to show that $(\Gamma, F)$ is a stable pair (so in particular, by the first part of the proof, $(\Gamma, F) \in \mathcal{N}^{s}(\mathcal{L}_1)$). To see that $F$ is pure, suppose $F' \subset F$ is a lower dimensional subsheaf. But then $F'$ must be $0$-dimensional and so generated by its global sections, i.e. $F'$ is generated by some proper subspace $V' \subset V$. In the inequality \eqref{ss1} we would get
\[\dim(V)(c_1 \chi(F') - \dim(\Gamma')l) \geq \dim(V')((c_1 r - 1)l + c_1\chi(F)),\]
where $\Gamma' = \Gamma \cap H^0(F')$. Since $c_1 > \frac{1}{r}$ this can never be satisfied. To see that $\Gamma$ generates $F$ generically, suppose that the section $s$ factors through $F' \subset F$ with multiplicity $r'$ ($r' > 0$ since we have shown that $F'$ must be $1$-dimensional). Then $\dim(\Gamma') = 1$ and in \eqref{ss1} we get
\[\dim(V)((c_1 r'- 1)l + c_1 \chi(F')) \geq \dim(V')((c_1 r - 1)l + c_1\chi(F)).\]
This implies
\[\dim(V)(c_1 r'- 1) \geq \dim(V')(c_1 r - 1),\]
which may be rewritten as
\[1 \leq \frac{r'}{r} - (c_1 r - 1)\left(\frac{\dim(V')}{\dim(V)} - \frac{r'}{r}\right).\]
Now
\[\left(\frac{\dim(V')}{\dim(V)} - \frac{r'}{r}\right) = \frac{r'}{r}\left(\frac{1}{1+\frac{\chi(F)}{r m}}-1\right) + \frac{\chi(F')}{r m}\frac{1}{1+\frac{\chi(F)}{rm}}\,.\]
Since $0 < r' \leq r$ and $\chi(F') \leq \chi(F)$ this term is $O(m^{-1})$ \emph{uniformly} over all subsheaves $F' \subset F$. Therefore
\[1 + O(m^{-1}) \leq \frac{r'}{r} \leq 1\]
holds uniformly over all subsheaves $F' \subset F$. Since, for a fixed $r$, there are only a finite number of possible $r'$, this implies $r' = r$ for $m \gg 0$. It follows that the quotient $F/F'$ is $0$-dimensional as required.
\end{proof}

This proof may be seen as a toy model of the Simpson-Le Potier estimates \cite[Section 4.5]{lepot}, modified to take advantage of the assumption $d = 1$.\\
\begin{prop}\label{hilb_moduli} A structure sheaf $\O_Z$ with $P_Z = P_\beta$, its canonical section $1\!:\O_X \to \O_Z \to 0$, and a marking of $H^0(\O_Z(m))\cong V$ corresponds to a closed point of $\mathcal{N}$ which is stable under the action of $\Sl(V)$ linearised on $\mathcal{L}_0$.

Conversely the semistable locus $\mathcal{N}^{ss}(\mathcal{L}_0)$ coincides with the stable locus $\mathcal{N}^s(\mathcal{L}_0)$, and a closed point of $\mathcal{N}^s(\mathcal{L}_0)$ corresponds to a structure sheaf $\O_Z$ with $P_Z = P_\beta$ and the canonical section $1\!:\O_X \to \O_Z \to 0$.

Therefore $\mathcal{N}^{s}(\mathcal{L}_0)$ has the good quotient $I_n(X, \beta)$ for the action of $\Sl(V)$.
\end{prop}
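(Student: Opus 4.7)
The plan is to mimic the proof of Proposition \ref{pairs_moduli}, tracking how the sign reversal $c_0 r - 1 < 0$ (opposite to $c_1 r - 1 > 0$) affects the estimates. In some places this reversal helps and in others it breaks the analogous argument, forcing a substitute.

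For the forward direction, let $(\Phi, F) = (\Gamma, \O_Z)$ arise from a closed subscheme $Z$ with $P_Z = P_\beta$, canonical section $1\colon\O_X \to \O_Z$, and a marking $V \cong H^0(\O_Z(m))$. Under the inclusion $H^0(\O_Z) \subset V \otimes R_m^*$ the section $1$ maps to $\sum_i (x_i|_Z) \otimes x_i^*$; for $m \gg 0$ the restriction map $R_m \to V = H^0(\O_Z(m))$ is surjective (by Serre vanishing on the ideal sheaf), so the vectors $x_i|_Z$ span all of $V$. Consequently $\Phi \not\subset V' \otimes R_m^*$ for any proper $V' \subsetneq V$, i.e.\ $\dim \Phi' = 0$. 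The HM inequality \eqref{ss1} (with $c_0$) then reduces to $\dim V \cdot c_0 P_{F'}(l) > \dim V'(c_0 P_F(l) - l)$. Since $c_0 r < 1$ the right-hand side behaves like $\dim V'(c_0 r - 1) l$ and tends to $-\infty$, while the left-hand side is either nonnegative (if $F'$ is one-dimensional) or bounded (if $F'$ is zero-dimensional or zero); in either case strict inequality holds for $l \gg 0$, uniformly over the finitely many possibilities for the multiplicity $r'$.

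For the converse, the first and hardest step is to show that any $(\Phi, F) \in \mathcal{N}^{ss}(\mathcal{L}_0)$ lies in $\bp|_T$, so that $\Phi$ genuinely corresponds to a section. The proof of Proposition \ref{pairs_moduli} took $V' = \ker(V \to H^0(F(m)))$ in \eqref{ss1} and used $c_1 r > 1$ to force $V' = 0$; that argument \emph{fails} for $c_0 r < 1$, since both cases $\dim\Phi' = 0, 1$ now yield consistent (or automatically satisfied) inequalities. I would instead circumvent this via boundedness: the kernels $K = \ker(V \otimes \O_X(-m) \twoheadrightarrow F)$ are subsheaves of the fixed sheaf $\mathcal{H}$ with fixed Hilbert polynomial, hence form a bounded family; enlarging $m$ makes their Castelnuovo--Mumford regularity uniformly bounded, so $H^1(K(m)) = 0$ and the long exact sequence gives $V \twoheadrightarrow H^0(F(m))$, which is then an isomorphism by dimension count. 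Thus $T = \bq$ and $\mathcal{N} = \bp$, so the semistable point $(\Phi, F)$ really does lie in $\bp|_T$ and provides a nonzero section $s\colon\O_X \to F$.

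It remains to show $s$ is surjective. Let $F' = \im(s)$, of multiplicity $r' \leq r$, and let $V' \subset V$ be the preimage of $H^0(F'(m))$ under $V \cong H^0(F(m))$. Then $V'$ generates $F'$ and $\Gamma \subset H^0(F')$, so $\dim\Phi' = 1$. The leading term of \eqref{ss1} gives $\dim V(1 - c_0 r') \leq \dim V'(1 - c_0 r)$; if $r' < r$ then $1 - c_0 r' > 1 - c_0 r > 0$ together with $\dim V' \leq \dim V$ gives a contradiction, so $r' = r$. In that case the leading-order inequality forces $\dim V = \dim V'$ and hence $V' = V$; then $H^0(F'(m)) = H^0(F(m))$, and the long exact sequence attached to $0 \to F'(m) \to F(m) \to (F/F')(m) \to 0$ forces $H^0((F/F')(m)) = 0$ (using $H^1(F'(m)) = 0$ for $m \gg 0$), which for a zero-dimensional $F/F'$ means $F/F' = 0$. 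Thus $s$ is surjective, $F = \O_Z$ for $Z = V(\ker s)$, and the strictness of all the earlier inequalities shows $\mathcal{N}^{ss}(\mathcal{L}_0) = \mathcal{N}^s(\mathcal{L}_0)$. The good quotient by $\Sl(V)$ forgets the marking and identifies with $I_n(X, \beta)$.
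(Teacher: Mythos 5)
The forward direction of your proposal is fine (and your observation that $R_m\to H^0(\O_Z(m))$ is onto for $m\gg0$, so that $\dim\Phi'=0$ for \emph{every} proper $V'\subsetneq V$, is exactly the right way to rule out the dangerous case $\dim\Gamma'=1$), and your analysis of surjectivity of $s$ via a proper $V'$ with $\dim\Phi'=1$ matches the paper's leading-order computation. The genuine gap is in your first step of the converse: the claim that ``enlarging $m$'' bounds the regularity of the kernels $K=\ker(V\otimes\O_X(-m)\to F)$ so that $H^1(K(m))=0$, hence $T=\bq$ and $\mathcal{N}=\bp$. This is circular and false. The Quot scheme $\bq=\quot(V\otimes\O_X(-m),P_\beta)$ itself depends on $m$ (with $\dim V=P_\beta(m)$), so enlarging $m$ replaces $\bq$ and its kernels; and for \emph{every} $m$ the scheme $\bq$ contains quotients for which $V\to H^0(F(m))$ is not injective: pick any $F$ with $P_F=P_\beta$ such that $F(m)$ is generated by fewer than $P_\beta(m)$ sections (always possible for $m\gg0$, e.g.\ $F=\O_Z$) and compose the evaluation with a non-injective $\phi\colon V\to H^0(F(m))$ whose image still generates. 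So $T\subsetneq\bq$ strictly, $\mathcal{N}$ may well meet $\PP(V\otimes R_m^*)\times(\bq\setminus T)$, and your argument never shows that an $\mathcal{L}_0$-semistable point lies in $\bp|_T$. You correctly noticed that the mechanism used for $\mathcal{L}_1$ (taking $V'=\ker(V\to H^0(F(m)))$ and using $c_1r>1$) breaks down when $c_0r<1$ --- but that difficulty cannot be legislated away by boundedness; it is a real feature of the $\mathcal{L}_0$ linearisation.

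The paper sidesteps this point rather than proving it directly: it only analyses semistable points \emph{inside} $\bp|_T$ (where the $\dim\Phi'=0$ constraint forces the section to be onto, giving structure sheaves with a marking), so that $\pi(\mathcal{N}^{ss}(\mathcal{L}_0)\cap\bp|_T)\cong I_n(X,\beta)$; since $\bp|_T$ is dense in $\mathcal{N}$ by the definition of $\mathcal{N}$, this gives a locally closed immersion $I_n(X,\beta)\hookrightarrow\mathcal{N}/\hspace{-3pt}/_{\mathcal{L}_0}\Sl(V)$ with dense image, and properness of $I_n(X,\beta)$ together with separatedness of the GIT quotient upgrades it to an isomorphism --- the possible semistable boundary points are thereby dealt with indirectly, not by showing $\mathcal{N}^{ss}(\mathcal{L}_0)\subset\bp|_T$ from the Hilbert--Mumford inequalities. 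To repair your proof you would need either this density/properness argument or some other genuine replacement for the missing step. A smaller point: in your last paragraph $F/\im(s)$ is not automatically $0$-dimensional, so you should rule out a $1$-dimensional quotient with $H^0((F/F')(m))=0$ by a uniform boundedness argument rather than assume it away.
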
  
\begin{proof} To a structure sheaf $\O_Z$ we associate the point $(\Gamma, \O_Z) \in \mathcal{N}$ which corresponds to the subspace $\Gamma$ generated by $1\!:\O_X \to \O_Z$ plus a marking for $H^0(\O_Z(m))$. We claim that for $V' \subset V$ generating $F' \subset \O_Z$ the inequality
\[\dim(V)(c_0 P_{F'}(l)-\dim(\Gamma')l) > \dim(V')(c_0 P_F(l)-l)\]
always holds, where $\Gamma' = \Gamma \cap H^0(\O_Z)$. As before this proves the first part of the Proposition. To prove the inequality, by the choice of $c_0$ it is enough to prove the inequality of leading order terms,
\[\dim(V)(\dim(\Gamma') - c_0 r') < \dim(V')(1 - c_0 r).\] 
Note that for a proper subsheaf $F' \subset F$ we cannot have $\dim(\Gamma') = 1$ as the canonical section is onto. But if $\dim(\Gamma') = 0$ the above inequality is always satisfied since by definition of $c_0$ one has $1 - c_0 r > 0$. 

For the converse, start with $(\Phi, F) \in \mathcal{N}^{ss}(\mathcal{L}_0)$. The inequality
\beq{Jineq}
\dim(V)(c_0 P_{F'}(l)-\dim(\Phi')l) \geq \dim(V')(c_0 P_F(l)-l)
\eeq
holds for any $V' \subset V$ generating $F' \subset F$ and $\Phi' = \Phi\cap(V'\otimes R^*_m)$ by the Hilbert-Mumford criterion. We must show that it implies that $(\Phi, F)$ corresponds to a structure sheaf with its canonical section and the choice of a marking (so in particular, by the first part of the proof, $(\Phi, F) \in \mathcal{N}^{s}(\mathcal{L}_0)$). Taking leading coefficients in \eqref{Jineq} we find
\[\dim(V)(\dim(\Phi') - c_0 r') \leq \dim(V')(1 - c_0 r).\]
As $\dim(V) > \dim(V')$ this can only hold if 
\[\dim(\Phi') - c_0 r' < 1 - c_0 r.\]
If $\dim(\Phi') = 1$ this would imply $r' > r$, which is absurd. On the other hand for $\dim(\Phi') = 0$ the inequality is always satisfied, since by the choice of $c_0$ we have $1-c_0 r > 0$. Now let 
\[\pi\!: \mathcal{N}^{ss}(\mathcal{L}_0) \to \mathcal{N}/\hspace{-3pt}/_{\mathcal{L}_0}\Sl(V)\]
denote the quotient map. The argument above shows that there is an isomorphism
\[\pi(\mathcal{N}^{ss}(\mathcal{L}_0)\cap\bp|_T) \cong I_n(X,\beta).\]
Therefore we obtain a locally closed immersion
\[I_n(X,\beta) \into \mathcal{N}/\hspace{-3pt}/_{\mathcal{L}_0}\Sl(V)\]
with dense image. Since both schemes are proper and separated this is an isomorphism.
\end{proof}

\subsection{The GIT wall crossing (Theorem \ref{gitwc}).}
Let $(\Phi, F)$ be any closed point of $\mathcal{N}$, and fix the ample $\mathbb{Q}$-linearisation $\mathcal{L}_t = (1-t)\mathcal{L}_0 + t \mathcal{L}_1$. To check whether $(\Phi, F)$ belongs to $\mathcal{N}^{ss}(\mathcal{L}_t)$ we will use the Hilbert-Mumford criterion. For any one parameter subgroup $\lambda\!: \C^* \into \Sl(V)$ we have
\[\mu^{\mathcal{L}_t}((\Phi, F), \lambda) = (1-t)\mu^{\mathcal{L}_0}((\Phi, F), \lambda) + t\mu^{\mathcal{L}_1}((\Phi, F), \lambda).\] 
Since we already know that $\mathcal{N}^{ss}(\mathcal{L}_i) = \mathcal{N}^{s}(\mathcal{L}_i)$ for $i = 0, 1$ we may assume that $(\Phi, F)$ is stable with respect to $\mathcal{L}_0$ and unstable with respect to $\mathcal{L}_1$, or vice virsa. In the first case say (the other is similar) fix a one parameter subgroup $\lambda$ for which
\begin{equation}\label{opposites}
\mu^{\mathcal{L}_0}((\Phi, F), \lambda)\cdot\mu^{\mathcal{L}_1}((\Phi, F), \lambda) < 0.
\end{equation}
Clearly, when \eqref{opposites} holds, convexity implies that for any choice of $(\Phi, F)$ and $\lambda$ there exists a unique critical value $t$, $0 < t < 1$, with $\mu^{\mathcal{L}_{t}}((\Phi, F), \lambda) = 0$. 
 
We claim that in fact there exists a unique critical value $t^*$, $0 < t^* < 1$, such that when \eqref{opposites} holds one has
\[\mu^{\mathcal{L}_{t^*}}((\Phi, F), \lambda) = 0\]
\emph{independently of $(\Phi, F)$ and $\lambda$}. This will complete the proof.

By the remark following the proof of Lemma \ref{hm} we may assume without loss of generality that $\mu^{\mathcal{L}_i}((\Phi, F),\lambda)$ has the special form $\mu^i(V')$, i.e.
\[\mu^{\mathcal{L}_i}((\Phi, F),\lambda) = \dim(V)(c_i P_{F'}(l)-\dim(\Phi')l) - \dim(V')(c_i P_F(l)-l)\]
for $i = 0, 1$. Reading through the proofs of Propositions \ref{pairs_moduli} and \ref{hilb_moduli} we see that the inequality \eqref{opposites} can only hold when 
\begin{enumerate}
\item[$\bullet$] $F'$ is $0$-dimensional and $\dim(\Phi') = 0$, or
\item[$\bullet$] $F'$ is $1$-dimensional, $r = r'$ and $\dim(\Phi') = 1$.
\end{enumerate} 
The only possible value for $t^*$ is
\[t^* = \left(1 - \frac{\mu^{1}(V')}{\mu^{0}(V')}\right)^{-1}\]
and we must show that this is independent of $V'$, and moreover that the two values coincide.

When $\dim(F') = \dim(\Phi') = 0$ we get
\begin{align*}
\frac{\mu^0(V')}{\mu^1(V')} &= \frac{\dim(V) c_0 \chi(F') - \dim(V')((c_0 r - 1)l + c_0 \chi(F))}{\dim(V)c_1\chi(F') - \dim(V')((c_1 r - 1)l + c_1 \chi(F))}\\
&= \frac{(\dim(V)-\chi(F))c_0 - (c_0 r - 1)l}{(\dim(V)-\chi(F))c_1 - (c_1 r - 1)l}\\
&= \frac{r m c_0 - (c_0 r - 1) l}{r m c_1 - (c_1 r-1)l}\,,
\end{align*}
where we have used $\dim(V') = \chi(F')$. The last expression is clearly independent of $V'$. When $\dim(F') = \dim(\Phi) = 1$ and $r = r'$ we get 
\begin{align*}
\frac{\mu^0(V')}{\mu^1(V')} &= \frac{\dim(V) ((c_0 r - 1)l + c_0 \chi(F')) - \dim(V')((c_0 r - 1)l + c_0 \chi(F))}{\dim(V)((c_1 r - 1)l + c_1\chi(F')) - \dim(V')((c_1 r - 1)l + c_1 \chi(F))}\\
&= \frac{(\dim(V)-\dim(V'))(c_0 r - 1)l + c_0(\dim(V)\chi(F')-\dim(V')\chi(F))}{(\dim(V)-\dim(V'))(c_1 r - 1)l + c_1(\dim(V)\chi(F')-\dim(V')\chi(F))}\\
&= \frac{c_0(\dim(V')-\chi(F'))-(c_0 r - 1)l}{c_1(\dim(V')-\chi(F'))-(c_1 r - 1)l}\\
&= \frac{r m c_0 - (c_0 r-1)l}{r m c_1 - (c_1 r -1 )l}\,,
\end{align*}
as required. \smallskip

Finally we prove the stratification \eqref{strata} for $SS_n(X, \beta)$. By the  argument above the closed points of $\mathcal{N}^{ss}(\mathcal{L}_{t^*})\setminus\mathcal{N}^{s}(\mathcal{L}_{t^*})$ are in one-to-one correspondence with pairs $(F, s)$ with $P_F = P_\beta$ such that either
\begin{enumerate}
\item[$\bullet$] $F$ is the structure sheaf of a subscheme $Z \subset X$ which is not pure, or
\item[$\bullet$] $F$ is pure, but the section $s$ has a nontrivial $0$-dimensional cokernel $Q$.
\end{enumerate}
By the general theory the closed points of $\mathcal{N}/\!/_{\mathcal{L}_{t^*}}\Sl(V)$ are in one-to-one correspondence with the equivalence classes of the set of sheaves as above under the equivalence relation induced by the intersection of the closures of $\Sl(V)$-orbits inside $\mathcal{N}^{ss}(\mathcal{L}_{t^*})$. 

Let $F$ be pure with Cohen-Macaulay support $C$ and 
\[0 \to \O_C \to F \to Q \to 0,\]
$Q \neq 0$. Any quotient $0 \to Q' \to Q \to \O_p \to 0$ onto the structure sheaf of a closed point $p$ gives a sequence
\[0 \to \O_C \to F' \to Q' \to 0,\]
where $F'$ is a subsheaf of $F$ with quotient $\O_p$.

Choosing $V' = H^0(F'(m))$ induces a one parameter subgroup 
\[\lambda(t) = t^{\dim(V')-\dim(V)}\operatorname{id}_{V'}\,\oplus\ t^{\dim(V')} \operatorname{id}_{V/V'}\]
in $\Sl(V)$ with
\[\lim_{t \to 0} \lambda(t)\cdot(F, s) = (\overline{s}\oplus 0, F'\oplus\O_p),\]
\[\mu^{\mathcal{L}_{t^*}}((F, s), \lambda) = 0.\]
Therefore $(\overline{s}\oplus 0, F'\oplus\O_p)$ lies in the closure of $\Sl(V)\cdot(F, s)$ inside $\mathcal{N}^{ss}(\mathcal{L}_{t^*})$, and $\chi(Q') = \chi(Q) - 1$. By induction the equivalence class of $(F, s)$ under the relation above has a unique representative of the form
\[(\overline{s}\oplus 0, \O_C\oplus_p \O^{\oplus \operatorname{len}(Q)_p}_p).\] 

The case when $F$ is a structure sheaf $\O_Z$ is similar: for any injection $\O_p \hookrightarrow \O_Z$, where $p$ is a closed point, choosing $V' = H^0(\O_p)$ induces $\lambda$ such that 
\[\lim_{t \to 0}\lambda(t)\cdot(1, \O_Z) = (\overline{1}\oplus 0, (\O_Z/\O_p)\oplus\O_p),\]
\[\mu^{\mathcal{L}_t}((1, \O_Z), \lambda) = 0.\] 
By induction on $\chi(\I_{Z}/\I_{Z^{pur}})$ we obtain a unique representative of the form
\[(\overline{1}\oplus 0, \O_{Z^{pur}}\oplus_p \O^{\operatorname{len}(\I_{Z}/\I_{Z^{pur}})_p}_p).\]

\section{Introduction to wall crossing in the derived category}\label{firstProof}
% , and the topological DT/PT wall crossing
Throughout this section we fix a Cohen-Macaulay curve $C$ in a threefold $X$ and work only with those ideal sheaves and stable pairs whose underlying curve is $C$. By this we mean that any $Z\in I_n(X,\beta)$ has an underlying Cohen-Macaulay curve $C\subset Z$ such that $\I_C/\I_Z$ is a 0-dimensional sheaf $T$ (where $T$ stands for ``torsion"). Similarly any $(F,s)\in P_n(X,\beta)$ has support on a Cohen-Macaulay curve $C$ and 0-dimensional cokernel, which we also call $T$.

So we define
$$
I_n(X,C)\subset I_{n+\chi(\O_C)}(X,\beta)
$$
to be the locus of ideal sheaves $\I_Z\subset\I_C$ such that $\I_C/\I_Z$ is a 0-dimensional sheaf of length $n$. By Theorem \ref{gitwc} this is the projective scheme $\varphi_I^{-1}(C, S^n X)$. Similarly
$$
P_n(X,C)\subset P_{n+\chi(\O_C)}(X,\beta)
$$
is the locus of stable pairs $(F,s)$ with scheme theoretic support on $C$ and cokernel of length $n$. By Theorem \ref{gitwc} this is the projective scheme $\varphi_P^{-1}(C, S^n C)$.

We denote their Euler characteristics by $I_{n,C}$ and $P_{n,C}$ respectively. (This
differs from their definition in the introduction by a shift in $n$ of $\chi(\O_C)$.) We prove identities between them which are universal in $C$, allowing us to further ``integrate" them over all $C$ weighted by the functions $I_{n,C}$ and $P_{n,C}$. This will yield identities between the Euler characteristics of the moduli spaces $I_m(X,\beta)$ and $P_m(X,\beta)$, giving the Euler characteristic versions of Conjecture \ref{conj}.

\subsection{The simplest identity}
The relationship between $I_{1,C}$ and $P_{1,C}$ is very easy to see. (In the terminology of Theorem \ref{gitwc} we are considering the fibres of $\varphi_I$ and $\varphi_P$ over a point of the stratum with $k=1$, i.e. we are considering ideal sheaves and pairs with one free or embedded point.)

For each point $x\in X$ consider the space of ideal sheaves $\I_Z$ in $I_1(X,C)$ such that $\I_C/\I_Z\cong\O_x$. It is $\PP(\Hom(\I_C,\O_x))$, with $\I_Z$ being the kernel of the homomorphism. The moduli space $I_1(X,C)$ is the union over $x\in X$ of these projective spaces, each with Euler characteristic
$$
h_x:=\hom(\I_C,\O_x).
$$
Similarly, since stable pairs $\O_X\to F$ with cokernel $\O_x$
correspond to extensions $0\to\O_C\to F\to\O_x\to0$, the moduli space
$P_1(X,C)$ is the union over all $x\in X$ of $\PP(\Ext^1(\O_x,\O_C))$. This has Euler characteristic
$$
e_x:=\ext^1(\O_x,\O_C).
$$
While $h_x,\,e_x$ jump as $x$ moves in $X$, their \emph{difference} is constant:
$$
h_x-e_x=1.
$$
(This is a Riemann-Roch formula combined with Serre duality; see Lemma \ref{SDRR} below.) Therefore ``integrating" over all $x\in X$ we find that
\beq{1}
I_{1,C}-P_{1,C}=e(X).
\eeq

\subsection{The general case} \label{gencase}
Since $P_0(X,C)$ is the single point $(\O_C,1)$, we can rewrite \eqref{1} as
$$
I_{1,C}=P_{1,C}+e(X)P_{0,C}.
$$
This is the $n=1$ case of Theorem \ref{EulerPT}. Below we will prove the general
case,
\beq{ePT}
I_{n,C}=P_{n,C}+e(X)P_{n-1,C}+e(\Hilb^2X)P_{n-2,C}+\ldots+e(\Hilb^nX)P_{0,C},
\eeq
relating the Euler characteristics of the fibres of $\varphi_I,\,\varphi_P$ of Theorem
\ref{gitwc} for $k\le n$.
 
We can ``integrate" the identities \eqref{ePT} over all $C$. By this we mean
we take weighted Euler characteristics (with either side of \eqref{ePT} as weighting function)
over the good open subsets $I^{pur}_{\chi(\O_C)}(X,\beta)$ of
the Hilbert schemes $I_{\chi(\O_C)}(X,\beta)$ consisting of only Cohen-Macaulay curves.
(Since the holomorphic Euler characteristic of the underlying Cohen-Macaulay curve $C$
can jump in flat families of subschemes $Z$, we have to do this over all $\bigcup_k
I^{pur}_k(X,\beta)$.)

Setting $m=\chi(\O_C)+n$, this gives Theorem \ref{main},
$$
I_{m,\beta}=P_{m,\beta}+e(X)P_{m-1,\beta}+e(\Hilb^2X)P_{m-2,\beta}+\ldots+e(\Hilb^mX)P_{0,\beta}.
$$

However \eqref{ePT} will turn out to be \emph{much} harder to prove than \eqref{1}, and will require more machinery than just an analysis of the fibres of $\varphi_I,\,\varphi_P$. We will work up to this machinery gently, giving a basic introduction to wall crossing.

\subsection{Stability and the simplest wall crossing}
The above formula \eqref{1} is the prototypical wall crossing formula, as we now explain.

We will be deliberately vague about stability conditions for a number of reasons. There are plenty of references for this topic, and anyway we do not strictly need them for more than motivation. The ideal scenario that we would like to work in has still not been realised: no one has yet managed to produce Bridgeland stability conditions on the derived category of coherent sheaves on a projective Calabi-Yau threefold. There are alternative fixes such as the variants of \cite{Bayer, TodaStab} or stability conditions on abelian categories.
The reader could just think of Gieseker- or slope- stability of vector bundles in what follows, or better (once we get back to Hilbert schemes and stable pairs) the GIT stability of the first half of this paper. \medskip

The model case that all wall crossings build on, and of which \eqref{1} is an analogue, is the following. \smallskip

Suppose that we have a moduli space of sheaves $E$ which are stable with respect to some slope function defined on the K-theory.  Suppose that we move the stability condition so that the phase of a subsheaf $A$ of $E$ has the same phase as $E$, and suppose further that the only classes in topological K-theory whose phases coincide are linear combinations of $[A]$ and $[E]$ (this is called a \emph{codimension one wall} in the space of stability conditions).

Then $E$ becomes strictly semistable due to exact sequences of the form
\beq{AB}
0\to A\to E\to B\to0.
\eeq
As our stability condition passes to the other side of the wall this exact sequence strictly destabilises $E$ and we lose it from the moduli space. However we can gain new stable sheaves -- nontrivial extensions $F$ in the opposite direction,
\beq{BA}
0\to B\to F\to A\to0.
\eeq
So long as we do not cross any other walls, and so long as $A$ and $B$ themselves remain stable on both sides of the wall (for instance if the K-theory classes of $A$ and $B$ are distinct and primitive then this is guaranteed) then $F$ is indeed stable and this is the only thing that can happen.

In this case elementary properties of stability show that $A$ and $B$ are simple ($\Aut=\C^*$) and have no homomorphisms between them. In the Calabi-Yau threefold case, Serre duality therefore forces the vanishing of all Ext groups between $A$ and $B$ except for
$$
\Ext^1(B,A)\cong\Ext^2(A,B)^* \qquad\text{and}\qquad \Ext^2(B,A)\cong\Ext^1(A,B)^*.
$$ 
The first group governs extensions \eqref{AB}, the second controls \eqref{BA}.

Therefore on passing through the wall we lose a $\PP(\Ext^1(B,A))$ of sheaves \eqref{AB} and gain a $\PP(\Ext^1(A,B))$ of sheaves \eqref{BA}. These have Euler characteristics $\ext^1(B,A)$ and $\ext^1(A,B)$ respectively. While we cannot control the jumping of these numbers as we vary $A$ and $B$, their \emph{difference} is the Mukai pairing of $A$ and $B,$ by Riemann-Roch and vanishing:
\begin{eqnarray*}
\ext^1(B,A)-\ext^1(A,B) \!\!&=&\!\! \hom(A,B)-\ext^1(A,B)+\ext^2(A,B)-\ext^3(A,B) \\ &=& \chi(A,B)\ =\ \int_Xch(A^\vee)ch(B)\operatorname{Td}(X).
\end{eqnarray*}
Therefore each $A$ and $B$ contribute to a change $\chi(A,B)$ in the Euler characteristic
of the moduli space of stable sheaves $E$.
Now ``integrate" this topological constant over the moduli spaces $\M_{[A]}$ and $\M_{[B]}$ of stable sheaves with the same topological K-theory classes as $A$ and $B$ respectively. The result is that on crossing the wall the Euler characteristic
of the moduli space of stable sheaves $E$ changes by
\beq{change}
\chi(A,B)e(\M_{[A]})e(\M_{[B]}).
\eeq

\subsection{Ideal sheaves and stable pairs} \label{ptwc}
As explained in \cite{PT1}, Conjecture \ref{conj} should be thought of as a similar wall crossing, but in the derived category. For what follows we will not need the Calabi-Yau
condition.

The exact sequences
\beq{ideal1}
0\to\I_Z\to\I_C\to T\to0
\eeq
and
\beq{pair1}
0\to\I_C\to\O_X\to F\to T\to0
\eeq
are equivalent in $D^b(X)$ to the exact triangles
\beq{ideal}
T[-1]\to\I_Z\to\I_C
\eeq
and
\beq{pair}
\I_C\to I\udot\to T[-1].
\eeq
(Recall that $I\udot$ is the complex $\{\O_X\to F\}\in D^b(X)$ with $\O_X$ in degree 0.)
These are reminiscent of \eqref{AB} and \eqref{BA}, but in the derived category instead of the abelian category of coherent sheaves, with the objects $T[-1]$ and $\I_C$ playing the roles of $A$ and $B$. 

\begin{lem} \label{SDRR}
On any threefold $X$, the only nonzero Ext groups between $T[-1]$ and $\I_C$ are
$$
\Ext^1(\I_C,T[-1]) \quad\text{and}\quad \Ext^1(T[-1],\I_C)
$$
and their Serre duals.

The first is $\Hom(\I_C,T)$; we denote its dimension by $h_T$. The second is isomorphic to $\Ext^1(T,\O_C)$, whose dimension we denote $e_T$. Then
\beq{RR}
h_T-e_T=\chi(\I_C,T)=[T],
\eeq
where $[T]$ is the length of the 0-dimensional sheaf $T$.
\end{lem}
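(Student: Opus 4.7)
The plan is to reduce everything to Ext groups between $T$, $\I_C$, $\O_X$ and $\O_C$ via the defining sequence $0\to\I_C\to\O_X\to\O_C\to 0$, combining long exact sequences with Serre duality and Hirzebruch--Riemann--Roch on the smooth threefold $X$. The shifts
\[\Ext^i(T[-1],\I_C)=\Ext^{i+1}(T,\I_C), \qquad \Ext^i(\I_C,T[-1])=\Ext^{i-1}(\I_C,T)\]
mean it is enough to determine $\Ext^*(T,\I_C)$ and $\Ext^*(\I_C,T)$.

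First I will apply $\Hom(T,-)$ to the defining sequence. Two vanishings feed the long exact sequence: $\Ext^i(T,\O_X)=0$ for $i<3$, since $T$ has codimension~$3$ on a smooth threefold (local/Grothendieck duality), and $\Hom(T,\O_C)=0$, since $\O_C$ is pure of dimension~$1$ while $T$ is zero-dimensional. These immediately force $\Ext^0(T,\I_C)=\Ext^1(T,\I_C)=0$ and produce a canonical isomorphism $\Ext^2(T,\I_C)\cong\Ext^1(T,\O_C)$. Shifting gives $\Ext^0(T[-1],\I_C)=0$ and $\Ext^1(T[-1],\I_C)\cong\Ext^1(T,\O_C)=e_T$, as claimed. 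The identification $\Ext^1(\I_C,T[-1])=\Hom(\I_C,T)=h_T$ is automatic from the shift, and $\Ext^0(\I_C,T[-1])=0$ because negative Exts vanish.

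To establish the remaining vanishings and the Riemann--Roch identity I will use Serre duality on the smooth threefold: $\ext^i(\I_C,T)=\ext^{3-i}(T,\I_C\otimes\omega_X)$. Because $T$ is zero-dimensional, the local Ext sheaves $\eext^{*}(T,\I_C)$ are zero-dimensional and the local-to-global spectral sequence collapses; tensoring a zero-dimensional sheaf by the line bundle $\omega_X$ preserves all dimensions, so $\ext^i(\I_C,T)=\ext^{3-i}(T,\I_C)$. Combined with the previous step this yields $\ext^2(\I_C,T)=\ext^3(\I_C,T)=0$ and identifies the ``Serre duals'' $\ext^1(\I_C,T)=e_T$ and $\ext^3(T,\I_C)=h_T$, completing the vanishing claims. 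Finally, Hirzebruch--Riemann--Roch applied to the defining sequence gives $\chi(\I_C,T)=\chi(\O_X,T)-\chi(\O_C,T)=[T]-0=[T]$: the second term vanishes because $ch(\O_C)$ lives in cohomological degrees $\ge 4$ while $ch(T)$ is concentrated in top degree~$6$, so $ch(\O_C^{\vee})\cdot ch(T)\cdot\operatorname{Td}(X)$ has no degree-$6$ component. Substituting the computed Ext dimensions into $\chi(\I_C,T)=\hom(\I_C,T)-\ext^1(\I_C,T)+\ext^2(\I_C,T)-\ext^3(\I_C,T)$ yields $\chi(\I_C,T)=h_T-e_T$, and comparison with the previous line gives $h_T-e_T=[T]$. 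The one mild obstacle is the care needed with the $\omega_X$-twist in Serre duality; the argument works on an arbitrary smooth projective threefold (not only the Calabi--Yau case) precisely because twisting a zero-dimensional sheaf by a line bundle does not alter any Ext dimensions.
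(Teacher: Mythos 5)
Your proof is correct, and its skeleton coincides with the paper's: both apply $\Hom(T,-)$ to $0\to\I_C\to\O_X\to\O_C\to0$ to get $\Ext^2(T,\I_C)\cong\Ext^1(T,\O_C)$, both use Serre duality together with the observation that the $\omega_X$-twist is harmless for sheaves supported in dimension zero, and both finish with Riemann--Roch. Two sub-steps are handled differently. For the vanishing of $\Ext^{\ge 2}(\I_C,T)$ the paper argues directly that $\I_C$ has homological dimension one because $C$ is Cohen--Macaulay, whereas you deduce it by Serre duality from $\Hom(T,\I_C)=\Ext^1(T,\I_C)=0$, which rests on the codimension-three vanishing $\Ext^{<3}(T,\O_X)=0$ and on purity of $\O_C$; so Cohen--Macaulayness enters your argument through $\Hom(T,\O_C)=0$ rather than through the projective dimension of $\I_C$ --- an equally valid mechanism. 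For $\chi(\I_C,T)=[T]$ the paper invokes invariance of the Euler pairing to move the support of $T$ off $C$ and replace $\I_C$ by $\O_X$, while you compute it directly from additivity and the degree count: the Chern character of $\O_C^{\vee}$ starts in degree four and that of $T$ sits in degree six, so $\chi(\O_C,T)=0$; this avoids any displacement argument. Finally, your treatment of the twist via the collapse of the local-to-global spectral sequence for $\eext^{\bullet}(T,\I_C)$ is a slightly more systematic version of the paper's passing identification $\Ext^1(\I_C,T\otimes\omega_X)\cong\Ext^1(\I_C,T)$, and correctly justifies why the statement needs no Calabi--Yau hypothesis.
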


\begin{proof}
Because $C$ is Cohen-Macaulay the sheaf $\I_C$ has homological dimension one. Therefore $\Ext^i(\I_C,T)$ vanishes for $i\ge2$. The only nonzero groups are $\Hom(\I_C,T)$ (the first of the groups above) and $\Ext^1(\I_C,T)$ (the Serre dual of the second of the groups above). This establishes the first claim, and simplifies Riemann-Roch to
$$
\chi(\I_C,T)=\hom(\I_C,T)-\ext^1(\I_C,T).
$$
Since this is a deformation invariant, we can compute it for $T$'s support disjoint from $C$. In this case we get the same answer by replacing $\I_C$ by $\O_X$, which yields
$$
\chi(\I_C,T)=\hom(\O_X,T)=[T].
$$

Finally the exact sequence $0\to\I_C\to\O_X\to\O_C\to0$ gives
$$
\Ext^1(T,\O_C)\cong\Ext^2(T,\I_C)=\Ext^1(T[-1],\I_C).
$$
This is Serre dual to $\Ext^1(\I_C,T\otimes\omega_X)\cong\Ext^1(\I_C,T)$, so $e_T=\ext^1(\I_C,T)$. Therefore $\chi(\I_C,T)=h_T-e_T$ as claimed.
\end{proof}

This result is compatible with there being a stability condition for which the stable objects of Chern character $(1,0,-\beta,-n+\beta.c_1(X)/2)$ are the ideal sheaves of $I_n(X,\beta)$ on one side of a codimension one wall and the stable pairs of $P_n(X,\beta)$ on the other side of the wall. In fact this can be justified using the variants of stability conditions
defined by Bayer \cite{Bayer} and Toda \cite{TodaStab}, but we shall just use that (and the GIT stabilities of the first half of this paper) as motivation.

The first of the above groups, $\Ext^1(\I_C,T[-1])$, governs the extensions $\I_Z$ in \eqref{ideal}. Via its isomorphism to $\Hom(\I_C,T)$ we see the extension $\I_Z$ as the kernel of a surjection $\I_C\to T$ as in \eqref{ideal1}. Similarly the second group $\Ext^1(\I_C,T[-1])$ governs the extensions $I\udot$ in \eqref{pair}. Via its isomorphism to $\Ext^1(T,\O_C)$
we see this extension $I\udot$ as entirely equivalent to the extension $0\to\O_C\to F\to T\to0$. \medskip

When $T$ has length 1 we recover the simplest type of wall crossing: $A=T[-1]$ and $B=\I_C$ have distinct primitive K-theory classes and so are stable on both sides of the wall -- they do not break up into further pieces. So \eqref{change} applies to give the change in invariants as we cross the wall.

Since $[T]=1$, the sheaf $T$ is necessarily the structure sheaf $\O_x$ of a point $x\in X$. Therefore $\M_{[A]}\cong X$ in \eqref{change}, while $\M_{[B]}$ is a single point because we have fixed the curve $C$. Thus \eqref{change} becomes precisely \eqref{1}:
$$
I_{1,C}-P_{1,C}=e(X).
$$
Of course putting $A=T[-1]$ and $B=\I_C$ into the argument that lead to \eqref{change} we get precisely the argument we gave to prove \eqref{1}.

\subsection{The two point case} \label{2pt}
As soon as $[T]=2$ the wall crossing becomes more complicated. The sheaves $T$ will only be semistable, so they split further into stable pieces, and their automorphism groups are bigger.

Given any 0-dimensional sheaf $T$ (of any length), the ideal sheaves $\I_Z$ such that $\I_C/\I_Z\cong T$ correspond to the surjections $\I_C\to T\to0$ modulo the automorphisms of $T$:
\beq{onto}
\frac{\Onto(\I_C,T)}{\Aut(T)}\,.
\eeq
Similarly the stable pairs supported on $C$ whose cokernel is isomorphic to $T$ form the space
\beq{pure}
\frac{\Pure(T,\O_C)}{\Aut(T)}\,,
\eeq
where $\Pure(T,\O_C)\subset\Ext^1(T,\O_C)$ is the subset of those extensions $0\to\O_C\to F\to T\to0$ for which the extension $F$ is pure. Just as the surjections \eqref{onto} correspond to homomorphisms that do not factor through any proper subsheaf, it is easy to see that the pure extensions are those which do not factor through any proper quotient $T\to Q\to0$ of $T$ via the map $\Ext^1(Q,\O_C)\to\Ext^1(T,\O_C)$.

We apply this to the three possibilities for $T$ of length 2.

\begin{itemize}
\item[(a)] $T=\O_x\oplus\O_y$, where $x\ne y\in X$.

Setting $H_x:=\Hom(\I_C,\O_x)$ and $E_x:=\Ext^1(\O_x,\O_C)$, \eqref{onto} becomes
\beq{xy}
\Big(\big(H_x\oplus H_y\ \big)\ \take\ \big(H_x \ \cup\ H_y \big)\Big)\Big/(\C^*\times\C^*).
\eeq
Of course this is just $\PP(H_x)\times\PP(H_y)$ with Euler characteristic $h_xh_y$. Similarly the pure extensions \eqref{pure} work out to be $\PP(E_x)
\times\PP(E_y)$ with Euler characteristic $e_xe_y$. Since $h_x-e_x=1=h_y-e_y$ we find the difference in Euler characteristics is
\beq{xy1}
h_xh_y-e_xe_y=e_x+e_y+1.
\eeq

\item[(b)] $T=\O_x\oplus\O_x$. In this case it is helpful to write $\Hom(\I_C,\O_x^{\oplus2})$ as $\Hom(\C^2,H_x)$, so that the surjections in the former group correspond to the rank 2 maps in the latter. Therefore \eqref{onto} becomes 
\beq{xx}
\Big(\Hom(\C^2,H_x)\ \take\ \bigcup_{\{\C \subset \C^2\}}\Hom(\C,H_x)
\Big)\Big/\operatorname{GL}(2,\C),
\eeq
i.e. the Grassmannian $\Gr(2,H_x)$ of 2-dimensional subspaces of $H_x$. This has Euler characteristic $h_x(h_x-1)/2$. In a parallel manner the pure extensions \eqref{pure} work out to be $\Gr(2,E_x)$ with Euler characteristic $e_x(e_x-1)/2$. The difference of these two is
\beq{xxex}
\left(\begin{array}{c} \!\!h_x\!\! \\ \!\!2\!\!\end{array}\right)
-\left(\begin{array}{c} \!\!e_x\!\! \\ \!\!2\!\!\end{array}\right)=e_x.
\eeq
\item[(c)] Finally let $T=\O_{2x}$ denote the structure sheaf of a length two subscheme of $X$ supported at $x$. (There is a $\PP^2$ of these.)

Let $\m_x$ be the maximal ideal of $\O_{2x}$ with quotient $\O_x$. Then the surjections \eqref{onto} are those homomorphisms which do not factor through $\m_x$, i.e.
\beq{2x}
\big(\Hom(\I_C,\O_{2x})\ \take\ \Hom(\I_C,\m_x)\big)\Big/(\C^*\ltimes\C),
\eeq
which is
$$
\PP(\Hom(\I_C,\O_{2x}))\ \take\ \PP(\Hom(\I_C,\m_x))\big/\C.
$$
Since $e(\C)=1$ this has Euler characteristic $h_{2x}-h_{\m_x}$, in the obvious notation.
The pure extensions are those which do not factor through the quotient $\O_{2x}\to\O_x$, i.e. \eqref{pure} becomes
$$
\big(\Hom(\O_{2x},\O_C)\ \take\ \Hom(\O_x,\O_C)\big)\Big/(\C^*\times\C),
$$
with Euler characteristic $e_{2x}-e_x$. The difference between these two Euler characteristics is
\beq{2x1}
\qquad\qquad (h_{2x}-h_{\m_x})-(e_{2x}-e_x)=(h_{2x}-e_{2x})-(h_{\m_x}-e_x)=2-1=1,
\eeq
using the apparent coincidence that $\m_x\cong\O_x$ as abstract sheaves. (We will return to this point.)
\end{itemize}

We now ``integrate" these differences in Euler characteristics over all length two sheaves $T$ to calculate $I_{2,C}-P_{2,C}$.

The sheaves $T$ of cases (a) and (c) together cover $\Hilb^2(X)$; integrating the 1 which appears in \eqref{xy1} and \eqref{2x1} therefore gives $e(\Hilb^2(X))$.

From \eqref{xy1} this leaves the integral of $e_x+e_y$ over the space of $(\O_x\oplus\O_y)$s ($x\ne y$). Passing to the double cover (i.e. ordering $x$ and $y$) gives the integral of $\frac12(e_x+e_y)$ over the complement of the diagonal in $X\times X$. By symmetry this is the integral of $e_x$. Adding in the contribution \eqref{xxex} of case (b), i.e. the integral of $e_x$ over the diagonal, we get the weighted Euler characteristic of $X\times X$ with weight $e_x$ pulled back from the first factor.

Integrating over the first factor of the product gives $P_{1,C}$, then integrating over the second makes this $e(X)P_{1,C}$. The final formula is therefore what we wanted:
$$
I_{2,C}-P_{2,C}=e(X)P_{1,C}+e(\Hilb^2X).
$$

\subsection{The theories of Joyce and Kontsevich-Soibelman}
When we started out on this project we hoped to derive the wall crossing formula by an elementary analysis of the fibres of the morphisms $\varphi_I\,,\varphi_P$ of \eqref{J11}. We were not convinced of the necessity of stacks, or of Joyce's rather complicated theory (not to mention the even more complicated, partly conjectural theory of Kontsevich-Soibelman).

The reader not yet convinced that a new idea is needed is invited to continue the above analysis for 3 points. It turns out that there is a good reason that this naive analysis is not sufficient to handle the general case. There is a clever reordering of the sums involved that exposes a certain symmetry that underpins the wall crossing and which has been invisible to us up until now. This leads naturally into Hall algebras and (a small part of) the theories of Joyce and Kontsevich-Soibelman.

The reader happy with those works can safely jump to the proof(s) of the wall crossing formula. For all others (and there is currently no friendly reference) this section should serve as both motivation and an introduction.

Even in the length two case we found spaces, such as $\Gr(2,H_x)$, with Euler characteristic $h_x(h_x-1)/2$ which is nonlinear in $h_x$. Therefore its interaction with Serre duality and the (linear!) Riemann-Roch formula
was more complicated. We also got terms like $\PP(H_x)
\times\PP(H_y)$ whose Euler characteristic $h_x.h_y$ is not even a function of $\hom(\I_C,\O_x\oplus\O_y)=h_x+h_y$.

By \eqref{onto} these spaces all arise in the following way.
\begin{itemize}
\item[(i)] Start with $\Hom(\I_C,T)$,
\item[(ii)] remove the locus of non-onto Homs, and
\item[(iii)] divide by the automorphisms of $T$.
\end{itemize}
We have good control over (i) by Serre duality and Riemann-Roch:
$\Hom(\I_C,T)\cong\C^{h_T}$ has exponent linear in $h_T$, while on the pairs side the relevant space is $\Ext^1(T,\O_C)\cong\C^{e_T}$, and $h_T-e_T=[T]$ is topological -- the length of the 0-dimensional sheaf $T$.
The removal of the bad locus (ii) will be discussed in Section \ref{IncExc} below. Finally the quotienting (iii) suggests two things. Firstly, we should be using stacks, by which we just mean that we should remember the automorphism group of each point $T$ in the space of sheaves. And secondly, we should use virtual Poincar\'e polynomials instead of Euler characteristics: we cannot compute the Euler characteristic $e(X/G)$ of a free quotient as $e(X)/e(G)$ if both $X$ and $G$ have zero Euler characteristic, but we \emph{can} do this with virtual Poincar\'e polynomials.

\subsection{Virtual Poincar\'e and Serre polynomials}
Virtual Poincar\'e polynomials are defined for all quasi-projective varieties. They are motivic, satisfying $P_t(X\take Y)+P_t(Y)=P_t(X)$ and $P_t(X\times Y)=P_t(X)P_t(Y)$. Therefore they are determined by their values on smooth projective varieties, for which they equal the classical Poincar\'e polynomial $P_t(X)=\sum_i(-1)^ib_i(X)t^i$. (Deligne showed that they are then well defined.) In particular they satisfy $P_t(X/G)=P_t(X)/P_t(G)$ when $X\to X/G$ is a Zariski-locally trivial $G$-bundle, and $\lim_{t\to1}P_t(X)=e(X)$.

The spaces that interest us will be unions of affine spaces, whose mixed Hodge structure
is of level 0, so that $P_t(X)$ is in fact a polynomial in $q=t^2$. (In this case this
function of $q$ is the Serre polynomial, counting points over $\mathbb F_q$.)
We therefore replace the variable $t$ (which we will recycle for a different purpose below) by $q^{1/2}$ and abuse notation by denoting the virtual Poincar\'e
polynomial by $P_q(X)\in\Z[q^{1/2}]$ and calling it the Serre polynomial throughout.

The Serre polynomial of the affine line $\C$ is $q=t^2$, so that of $\Hom(\I_C,T)$ is $q^{h_T}$. This linearity in the exponent, over which we have the usual control by Serre duality and Riemann-Roch, will be crucial. When $[T]=1$ the space of interest to us is
$\PP(H_x)$, whose Serre polynomial is computed via the prescription \eqref{onto} as
$$
\frac{P_q(H_x\take\{0\})}{P_q(\C^*)}=\frac{q^{h_x}-1}{q-1}
=q^{h_x-1}+q^{h_x-2}+\ldots+q+1,
$$
the right hand side reflecting its natural cell structure (or its usual Poincar\'e polynomial). Taking the limit as $q\to1$ we recover $e(\PP(H_x))=h_x$, essentially as $\frac d{dq}\big|_{q=1}\ q^{h_T}$. This was all we required for the 1 point case.

For 2 points we had the 3 cases (a), (b) and (c). In case (a) the Serre polynomial of \eqref{xy} is
\beq{aSerre}
\frac{q^{h_x+h_y}-q^{h_x}-q^{h_y}+1}{(q-1)^2}=(q^{h_x-1}+\ldots+q+1)(q^{h_y-1}+\ldots+q+1),
\eeq
i.e. the Serre polynomial of $\PP(H_x)\times\PP(H_y)$, with $\lim_{q\to1}=h_x.h_y$.

In case (b) we take the Serre polynomial of \eqref{xx}. Removing the origin in each of
the vector spaces in \eqref{xx} so that the union becomes disjoint, we get
\beq{bSerre}
\frac{(q^{2h_x}-1)-(q+1)(q^{h_x}-1)}{(q^2-1)(q^2-q)}\ =\ \frac{(q^{h_x-1}+\ldots+q+1)
(q^{h_x-2}+\ldots+q+1)}{q+1}\,.
\eeq
Tending $q\to1$ gives $e(\Gr(2,H_x))=h_x(h_x-1)/2$.

In case (c) we compute the Serre polynomial of \eqref{2x} to be
\beq{cSerre}
(q^{h_{2x}}-q^{h_{\m_x}})\big/q(q-1)\ =\ 
q^{h_{2x}-2}+\ldots+q^{h_{\m_x}-1},
\eeq
with limit $h_{2x}-h_{\m_x}$ as $q\to1$.

So all the nonlinearity in the Euler characteristics is coming from taking the $q\to1$ limit, in which the automorphisms contribute denominators like $q-1$ (and its $n$th powers) which effectively differentiate ($n$ times) the nice controllable terms like $q^{h_x}$ which have linear exponents. In the next section we will avoid such problems by taking $\lim_{q\to1}$ \emph{after} applying Riemann-Roch and Serre duality (in contrast to what we just did).

\subsection{Inclusion-exclusion and reordering the sum} \label{IncExc}
More importantly, to deal with problem (ii) above and get the calculations to work out in more complicated examples, we need to reorder the relevant sum in a crucial way.

By the inclusion-exclusion principle, write $\Onto(\I_C,T)$ as 
\beq{incexc}
\ \Hom(\I_C,T)\ -\bigcup_{\{T_1<T\}}\Hom(\I_C,T_1)\ +
\bigcup_{\{T_1<T_2<T\}}\Hom(\I_C,T_1)\ -\ \ldots\,,
\eeq
where each $<$ denotes the inclusion of a subsheaf which may be zero but may not be the whole sheaf. We want to take the Serre polynomial of this, divided by that of $\Aut(T)$, and then ``add up" over all $T$ and take $q\to1$.
(Throughout this section, ``add up", ``sum" or ``integrate" are all meant in the sense of taking Serre polynomials.)

Think of subsheaves $T_1<T$ as extensions $0\to T_1\to T\to Q_1\to0$. Then, for instance, the sum \emph{over all $T$} of the second term in \eqref{incexc} can be reordered by instead summing over \emph{all} extensions between \emph{all} 0-dimensional sheaves $T_1,Q_1$. (We can do something similar for all of the terms of \eqref{incexc}.)

In this way we will see a new symmetry by applying Serre duality and Riemann-Roch to the extensions between $T_1$ and $Q_1$. Putting $T_1$ and $Q_1$ on an equal footing (they are both arbitrary $0$-dimensional sheaves) also resolves the asymmetry between $\m_x$ and $\O_x$ that we noted in the 2 point case. We now see both as $\O_x$, and the sheaves $\O_{2x}$ and $\O_x\oplus\O_x$ as extensions between $\O_x$ and itself. \medskip

As an example we spell out explicitly how this reorders the sum of the terms (a), (b), (c) in the 2-point case of Section \ref{2pt}. We concentrate on the second term in the inclusion-exclusion \eqref{incexc} (the first and third are easier in this case).

So we sum $\Hom(\I_C,\O_x)$ over equivalence classes of exact sequences $0\to\O_x\to T\to\O_y \to 0$ for all $x, y\in X$. Here exact sequences are equivalent if they differ by the action of some $g\in\Aut T$ (changing the inclusion by post-multiplication by $g$, and the quotient by pre-multiplication by $g^{-1}$). Under this $\Aut(T)$-action the stabiliser of an exact sequence is $\Hom(\O_y,\O_x)\subset\Aut(T)$, where $\Phi\in\Hom(\O_y,\O_x)$ induces the automorphism of $T$ given by $\id_T$ plus the composition $T\to\O_y\Rt{\Phi}\O_x\to T$. Therefore when we divide the exact sequences by $\Aut(T)$ we get the equivalence classes (corresponding to points in $\Ext^1(\O_y,\O_x)$) with a residual action of $\Hom(\O_y,\O_x)$ (acting trivially on $\Ext^1(\O_y,\O_x)$, though not on the corresponding extension $T$). So for each $x,y\in X$ this sum contributes the motive
\beq{motive}
-\Hom(\I_C,\O_x)\times\frac{\Ext^1(\O_y,\O_x)}{(\Aut(\O_y)\times\Aut(\O_x))\times\Hom(\O_y,\O_x)}
\eeq
to the second term in the inclusion-exclusion \eqref{incexc}. As usual we are thinking of the sheaves $\O_y,\O_x$ as belonging to the \emph{stack} of 0-dimensional sheaves, i.e. we are remembering their automorphism groups which act on the extensions between them. The contribution at the level of Serre polynomials is
\beq{serre}
-q^{h_x}\ \frac{q^{\ext^1(\O_y,\O_x)}}{q^{\hom(\O_y,\O_x)}(q-1)^2}\,.
\eeq
We have seen previously how Serre duality and Riemann-Roch applied to the first exponent $h_x$ are important in comparing with the corresponding $\Pure$ contributions to the Serre polynomial of the space of stable pairs. But now we can also apply Serre duality and Riemann-Roch to the other
terms in \eqref{serre} as well:
$$
\ext^1(\O_y,\O_x)-\hom(\O_y,\O_x)=\ext^1(\O_x,\O_y)-\hom(\O_x,\O_y),
$$
so that
$$
\frac{q^{\ext^1(\O_y,\O_x)}}{q^{\hom(\O_y,\O_x)}}=\frac{q^{\ext^1(\O_x,\O_y)}}
{q^{\hom(\O_x,\O_y)}}\,.
$$
This symmetry was invisible before we reordered the sum, but will be important later.

In analysing \eqref{motive} we treat the zero and nonzero extensions separately, thus
splitting \eqref{serre} into the sum
\beq{serre2}
-q^{h_x}\,\frac1{q^{\hom(\O_y,\O_x)}(q-1)^2}\ -\ 
q^{h_x}\,\frac{q^{\ext^1(\O_y,\O_x)}-1}{q^{\hom(\O_y,\O_x)}(q-1)^2}\,.
\eeq
We begin with the first term, corresponding to the zero extension $0\in\Ext^1(\O_y,\O_x)$. Thus the sheaf $T$ is $\O_x\oplus\O_y$ and we are in case (a) of Section \ref{2pt} if $x\ne y$ and case (b) if $x=y$. When
$x\ne y$, this extension contributes
$$
-q^{h_x}\,\frac1{(q-1)^2}
$$
to \eqref{serre2}, and indeed this appears in the Serre polynomial of (a) \eqref{xy} as the second term in the numerator of \eqref{aSerre}.
Similarly when $x=y$ it contributes
$$
-q^{h_x}\,\frac1{q(q-1)^2}
$$
to (b) \eqref{xx}. And rewriting this as
$$
-\frac{(q+1)q^{h_x}}{(q^2-1)(q^2-q)}
$$
we see that it appears on the left hand side of \eqref{bSerre}.

This leaves the nonzero elements of $\Ext^1(\O_y,\O_x)$. For $y\ne x$ there are none; for $y=x$ they contribute the rest of \eqref{serre2},
\beq{mess}
-q^{h_x}\,\frac{q^{\ext^1(\O_y,\O_x)}-1}{q^{\hom(\O_y,\O_x)}(q-1)^2}
\ =\ -q^{h_x}\,\frac{q^3-1}{q(q-1)^2}\ =\ -q^{h_x}\,\frac{q^2+q+1}{q(q-1)}\,,
\eeq
to the inclusion-exclusion description of $\Onto(\I_C,\O_{2x})$ of case (c) \eqref{2x}.
And indeed multiplying the negative term in \eqref{cSerre} by $(q^2+q+1)$ (the Serre
polynomial of the space $\PP^2$ of sheaves $\O_{2x}$ supported at $x$) gives precisely
\eqref{mess}. \medskip

The only drawback of the reordering is that \eqref{serre} does not have a finite limit as $q\to1$, which is why we must leave taking the limit until last. While each of the contributions (\ref{aSerre}, \ref{bSerre}, \ref{cSerre}) of (a), (b) and (c) have finite $q\to1$ limits, \eqref{serre} is a combination of parts of those contributions which do not.

\subsection{Hall algebra}
Consideration of spaces like \eqref{motive} suggest that we work with the Ringel-Hall algebra of Joyce \cite{JoyceII}. Namely, we allow ourselves to weight the stack
$\T$ of 0-dimensional sheaves by another stack mapping to it, such as 
\begin{itemize}
\item $\Hom(\I_C,\ \cdot\ )$, the stack whose fibre over $T\in\T$ is $\Hom(\I_C,T)$,
\item $\Onto(\I_C,\ \cdot\ )$, the stack whose fibre over $T\in\T$ is $\Onto(\I_C,T)$,
\item $1_\T$, the stack $\T$ mapping to $\T$ by the identity map.
\end{itemize}
(So we think of each sheaf $T$ as being weighted by $\Hom(\I_C,T)$, or its Serre polynomial $q^{h_T}$, in the first case, or simply 1 in the final case. The second example is in fact a scheme, since $\Aut(T)$ acts freely.)
And we want to be able to ``add" objects $T_1,T_2$ in $\T$ by taking all of the extensions $\Ext^1(T_2,T_1)$ between them and mapping this to $\T$ via the universal extension (so that $e\in\Ext^1(T_2,T_1)$ maps to the sheaf $T\in\T$ that is defined by the extension $e$). This (noncommutative!) addition is the convolution product in the Hall algebra of stacks over $\T$. As usual we also keep track of automorphisms, and carry the weights along with the ``addition". The upshot is the following.

Let $\T^{\,2}$ be the stack of all short exact sequences $0\to T_1\to T\to T_2\to 0$ in $\T$. Mapping such an extension to $T\in\T$ defines a map $\T^{\,2}\to\T$ which we define to be the product $1_\T*1_\T$ of the stack $\T$ with itself. For any other stacks $U\to\T$ and $V\to\T$ we form their product from this one by fibre product:
$$
\xymatrix{
U*V \rto\dto& 1_\T*1_\T \rto\dto^{(\pi_1,\pi_2)}& \T \\
U\times V \rto& \T\times\T.
}$$
That is, $U*V$ is the fibre product of $U\times V\to\T\times\T$ and
$(\pi_1,\pi_2)\colon\T^{\,2}\to\T\times\T$, where $(\pi_1,\pi_2)$ maps the sequence $0\to T_1\to T\to T_2\to 0$ to $(T_1,T_2)$.

The product $*$ is associative, essentially because $1_\T*1_\T*1_\T$ is the stack of filtrations $T_1<T_2<T_3$, independently of the order we do the product in. (Here $T_1$ lies in the first copy of $\T$, $T_2/T_1$ in the second, and $T_3/T_2$ in the third.) Its identity is $1_0$, the stack consisting of the zero sheaf with its inclusion in $\T$.

In this language, the inclusion-exclusion \eqref{incexc} becomes the identity
\begin{multline} \label{psi}
\qquad \Onto(\I_C,\ \cdot\ )=\Hom(\I_C,\ \cdot\ )-\Hom(\I_C,\ \cdot\ )*1_{\T'} \\ + \Hom(\I_C,\ \cdot\ )*1_{\T'}*1_{\T'}+\ldots\,, \qquad
\end{multline}
where $\T'=\T\take\{0\}$ is the stack of \emph{nonzero} 0-dimensional sheaves.
This is the inversion of Bridgeland's generalisation \cite{Br} of the Reineke formula
\beq{Reineke}
\Hom(\I_C,\ \cdot\ )=\Onto(\I_C,\ \cdot\ )*1_\T
\eeq
which simply says that any $\phi\in\Hom(\I_C,T)$ is a surjection $\I_C\to T_1:=\im\phi$ followed by an extension $0\to T_1\to T\to T/T_1\to0$.
Since $1_\T=1_0+1_{\T'}$ it can be inverted via
$$
1_\T^{-1}\ =\ 1_0\ -\ 1_{\T'}\ +\ 1_{\T'}*1_{\T'}\ -\ 1_{\T'}*1_{\T'}*1_{\T'}\ +\ \ldots\,.
$$
(Formally we need to pass to a completion of the Hall algebra to make sense of this formula, but in our applications a finite truncation will suffice.) Applied to \eqref{Reineke} this gives \eqref{psi} and \eqref{incexc}.

In particular then, as an example, the collection of spaces \eqref{motive}, as we allow $\O_y$ to vary over any nonzero sheaf in $\T$, is the stack
$$
\Hom(\I_C,\O_x)*1_{\T'}.
$$

Restricting to finite type algebraic stacks with affine
geometric stabilizers mapping to $\T$, Joyce produces a motivic Ringel-Hall algebra $H(\T)$
with the above properties, together with an integration map
$$
P_q\colon H(\T)\to\Q(q^{1/2})[t]
$$
which, for instance, takes any Zariski-locally trivial global quotient $A/G\subset\T_n$ to the quotient of Serre polynomials $(P_q(A)/P_q(G))t^n$. Here $\T_n\subset\T$ is the stack of 0-dimensional sheaves of length $n$, and the $t$ is a formal variable to keep
track of this length. (Of course Joyce works in much more generality than this particular example.)

In particular, then, $P_q$ applied to $\Onto(\I_C,\ \cdot\ )\to\T$ gives the series
\beq{MNOP}
P_q(\Onto(\I_C,\ \cdot\ ))(t)=\sum_nP_q(I_n(X,C))t^n=:Z^I_C(X)(q,t),
\eeq
with limit $Z^I_C(X)(t)=\lim_{q\to1}Z^I_C(X)(q,t)$ the usual MNOP generating series of Euler characteristics of Hilbert schemes $I_n(X,C)$. Similarly
\beq{SP}
P_q(\Pure(\ \cdot\ ,\O_C))(t)=\sum_ne(P_n(X,C))t^n=:Z^P_C(X)(q,t),
\eeq
with limit $Z^P_C(X)(t)=\lim_{q\to1}Z^P_C(X)(q,t)$ the usual stable pairs generating series of Euler characteristics of the space $P_n(X,C)$.

In fact by Serre duality and Riemann-Roch, $P_q$ is a \emph{Lie algebra homomorphism} to the abelian Lie algebra $\Q(q^{1/2})[t]$:

\begin{thm} \label{liealg}
$P_q(U*V)=P_q(V*U)$.
\end{thm}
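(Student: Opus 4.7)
The plan is to write $P_q(U*V)$ as an explicit motivic integral over pairs of $0$-dimensional sheaves $(T_1, T_2) \in \T \times \T$, exhibit its only non-symmetric ingredient, and show that this ingredient is actually symmetric in $T_1,T_2$ by Serre duality and Riemann--Roch. Because $P_q$ takes values in a commutative ring, the result will then follow by relabelling the integration variable.

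First I would unwind the definition of the product in the Hall algebra. By construction $\T^{\,2}$ fibres over $\T\times\T$ via $(\pi_1,\pi_2)$, and the fibre over $(T_1,T_2)$ is the stack of extensions $0\to T_1\to T\to T_2\to 0$, which is the quotient stack $[\Ext^1(T_2,T_1)/\Hom(T_2,T_1)]$ (the $\Hom$-group acts trivially on extension classes and parametrises automorphisms of the sequence fixing $T_1$ and $T_2$). Applying Joyce's integration map $P_q$ as reviewed above, the contribution of the fibre at Serre-polynomial level is $q^{\ext^1(T_2,T_1)-\hom(T_2,T_1)}$. Pulling back along the maps from $U,V$ and dividing by the automorphism groups of $T_1,T_2$ (already absorbed in the Serre polynomials of $U$ and $V$ viewed as stacks over $\T$), we obtain the schematic identity
$$P_q(U*V) \;=\; \int_{(T_1,T_2)} P_q(U_{T_1})\,P_q(V_{T_2})\,q^{\ext^1(T_2,T_1)-\hom(T_2,T_1)}\,t^{[T_1]+[T_2]}.$$

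The only factor in this integrand that fails to be manifestly symmetric under $T_1\leftrightarrow T_2$ is the exponent $\ext^1(T_2,T_1)-\hom(T_2,T_1)$. The key step is to show
$$\ext^1(T_2,T_1)-\hom(T_2,T_1) \;=\; \ext^1(T_1,T_2)-\hom(T_1,T_2).$$
Since $T_1, T_2$ are $0$-dimensional, Riemann--Roch gives $\chi(T_i,T_j)=0$ (both Chern characters sit in top degree). Serre duality on the threefold yields $\Ext^i(T_1,T_2)\cong\Ext^{3-i}(T_2,T_1\otimes\omega_X)^*$, and because $T_1$ is supported on points the twist by $\omega_X$ is trivial, so $\hom(T_1,T_2)=\ext^3(T_2,T_1)$ and $\ext^1(T_1,T_2)=\ext^2(T_2,T_1)$. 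Substituting into $\chi(T_2,T_1)=0$ gives exactly the displayed identity.

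Finally, armed with the symmetry of the weight, relabelling $(T_1,T_2)\leftrightarrow(T_2,T_1)$ in the integral above transforms $P_q(U*V)$ into the analogous expression for $P_q(V*U)$, proving the theorem. The main obstacle is the first paragraph: rigorously identifying the motivic weight of the extension stack as $q^{\ext^1-\hom}$ inside Joyce's completed Hall algebra, and ensuring the infinite sum (formally over all $(T_1,T_2)$) is handled by a finite truncation of the length-grading variable $t$, as Joyce does in \cite{JoyceII}; once those formalities are in place, the Serre duality / Riemann--Roch input that makes $P_q$ a Lie algebra (rather than merely associative algebra) homomorphism is exactly the input highlighted by the formula above.
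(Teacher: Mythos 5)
Your proposal is correct and takes essentially the same route as the paper: the paper's ``proof'' cites Joyce's Equations 80--83 for the formal identity and then gives exactly your computation, namely that $P_q(U*V)$ is an integral over $U\times V$ of the weight $q^{\ext^1(T_2,T_1)-\hom(T_2,T_1)}$, which is symmetric in $T_1,T_2$ by Serre duality and Riemann--Roch because the Mukai pairing of two $0$-dimensional sheaves vanishes. The only difference is that you spell out the identification of the extension stack fibre as $[\Ext^1(T_2,T_1)/\Hom(T_2,T_1)]$, which is precisely the step the paper delegates to Joyce.
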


\noindent\emph{``Proof"}.
This is just a restatement of \cite[Equations 80-83]{JoyceII}.
These equations appear in the body of the proof of \cite[Theorem 6.1]{JoyceII},
which only holds for abelian categories of global dimension $1$. But in fact this assumption is only used in the last equality of Equation 83, while Equations 80-82 and the first equality in Equation 83 hold in general, as Joyce explains (they are proved in \cite[Corollary 5.15]{JoyceII} and \cite[Proposition 5.14]{JoyceII}).

Intuitively, the result should be clear from things we have already seen. The extensions $\Ext^1(T_2,T_1)$ between $T_1$ in the image of $U\to\T$ and $T_2$ in the image of $V\to\T$ also have a symmetry $\Hom(T_2,T_1)$, so the left hand integral is an integral over $U\times V$ of $P_q(\Ext^1(T_2,T_1))/P_q(\Hom(T_2,T_1))
=q^{\ext^1(T_2,T_1)-\hom(T_2,T_1)}$.

Similarly the right hand side is an integral over $U\times V$ of
$q^{\ext^1(T_1,T_2)-\hom(T_1,T_2)}$. But since $T_1,T_2\in\T$ are 0-dimensional, their Mukai pairing $\chi(T_1,T_2)$ is zero. Therefore
\beq{T2T1}
\ext^1(T_2,T_1)-\hom(T_2,T_1)=\ext^1(T_1,T_2)-\hom(T_1,T_2),
\eeq
by Riemann-Roch and Serre duality. \hfill$\square$

\medskip
\begin{rmk}
It is important to point out that this theorem \emph{is not} Joyce's Lie algebra homomorphism \cite[Theorem 6.12]{JoyceII} for the category of coherent sheaves on a Calabi-Yau threefold.
That homomorphism is the derivative at $q=1$ of the above result, and only defined on the Lie subalgebra of virtual indecomposables, making it much more complicated. Theorem
\ref{liealg} is a much simpler statement for the category $\mathcal{T}$, essentially saying that the Mukai pairing vanishes. When $\T$ is replaced by more general 
stacks of sheaves and $U$ and $V$ map to sheaves with constant Mukai pairing $m$, then
the result becomes $P_q(U*V)=q^mP_q(V*U)$.
\end{rmk} 

So this Lie algebra homomorphism is a natural reflection of Serre duality and Riemann-Roch. But, remarkably, Kontsevich and Soibelman have found a way to tweak the integration map to lift it to an \emph{algebra homomorphism}. This uses the Behrend $\chi^B$-weighting \cite{BehrendDT} in a crucial way, and is still conjectural in parts. This theory is clearly very powerful but we do not use it as we do not pretend to understand it. Instead we use the following weaker result from Joyce's theory.

\begin{thm} \label{lim*}
If both of $\lim_{q\to1}P_q(U)$ and $\lim_{q\to1}P_q(V)$ exist, then
$$
\lim_{q\to1}P_q(U*V)=\lim_{q\to1}P_q(U)\lim_{q\to1}P_q(V).
$$
\end{thm}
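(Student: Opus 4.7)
The plan is to unfold $P_q(U*V)$ from the fibre-product definition of $*$, stratify $U\times V$ so the integrand becomes locally constant, and then take the limit $q\to 1$. By construction, the fibre of the forgetful map $\T^{\,2}\to\T\times\T$ over $(T_1,T_2)$ is the stack of short exact sequences $0\to T_1\to T\to T_2\to 0$, which is isomorphic to $[\Ext^1(T_2,T_1)/\Hom(T_2,T_1)]$ with $\Hom(T_2,T_1)$ acting trivially (and simply recording the automorphisms of each extension, as in the proof of Theorem \ref{liealg}). Its motivic class is therefore $q^{\ext^1(T_2,T_1)-\hom(T_2,T_1)}$. Since $U,V$ are of finite type and the dimensions of Ext and Hom groups are constructible, one can stratify $U\times V$ into finitely many locally closed substacks $S_{a,b}$ on which $(\ext^1(T_2,T_1),\hom(T_2,T_1))$ is the constant pair $(a,b)$, yielding
\[
P_q(U*V)=\sum_{a,b}P_q(S_{a,b})\,q^{a-b},\qquad P_q(U)P_q(V)=P_q(U\times V)=\sum_{a,b}P_q(S_{a,b}).
\]

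Subtracting the two formulas gives
\[
P_q(U*V)-P_q(U)P_q(V)=\sum_{a,b}P_q(S_{a,b})\bigl(q^{a-b}-1\bigr).
\]
Every factor $q^{a-b}-1$ is a Laurent polynomial in $q$ that vanishes to first order at $q=1$; the terms with $a=b$ vanish identically. The plan is then to argue that each summand has limit zero as $q\to 1$, whence the whole finite sum does, yielding $\lim P_q(U*V)=\lim P_q(U)\cdot\lim P_q(V)$. The hypothesis that $\lim_{q\to 1}P_q(U)$ and $\lim_{q\to 1}P_q(V)$ exist guarantees that the combined sum $P_q(U\times V)=\sum_{a,b}P_q(S_{a,b})$ is regular at $q=1$, but this does not immediately imply termwise regularity of the summands.

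The main obstacle is thus uniform control of the pole orders of the individual $P_q(S_{a,b})$ at $q=1$, which a priori can be large because stabilisers of $0$-dimensional sheaves contain tori of unbounded rank. The plan is to sidestep this by refining each $S_{a,b}$ into global quotients $A/G$ with $G$ affine, where the pole order of $P_q$ at $q=1$ is controlled by the rank of a maximal torus of $G$; the uniform bound across the finite stratification follows from Joyce's constructibility and stratification results for motivic Ringel--Hall algebras in \cite[Sections 4--5]{JoyceII}. Once these bounds are in hand, the first-order zero $q^{a-b}-1$ absorbs any residual pole, each summand has limit $0$ as $q\to 1$, and summing the finitely many zeros produces the desired identity $\lim_{q\to 1}P_q(U*V)=\lim_{q\to 1}P_q(U)\cdot\lim_{q\to 1}P_q(V)$.
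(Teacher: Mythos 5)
Your stratification identity $P_q(U*V)=\sum_{a,b}P_q(S_{a,b})\,q^{a-b}$ is correct, and it is exactly the starting point of the paper's own argument (it is what \cite[Equations 80--83]{JoyceII} supply). The gap is entirely in the limiting step. A simple zero of $q^{a-b}-1$ at $q=1$ can only cancel a \emph{simple} pole, while the strata $S_{a,b}$ typically have poles of order $\geq 2$: any stratum whose points have automorphism group containing a torus $(\C^*)^n$ with $n\geq 2$ (already the case for decomposable $T$, e.g. $\O_x\oplus\O_y$, or $\O_x^{\oplus 2}$ with $\Aut=\operatorname{GL}(2,\C)$) contributes a pole of that order. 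So a ``uniform bound'' on pole orders does not rescue the termwise argument unless that bound is $1$, which it is not; refining into global quotients $A/G$ only identifies the pole order with the torus rank, it does not lower it. Hence the assertion that ``the first-order zero $q^{a-b}-1$ absorbs any residual pole'' fails, and with it the claim that each summand tends to $0$.

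More structurally, no termwise argument of this shape can work, because after the stratification your proof never uses the hypothesis that $\lim_{q\to1}P_q(U)$ and $\lim_{q\to1}P_q(V)$ exist, and the conclusion is false without it. Take $U=V=1_{\T_1}$, the stack of length-one sheaves $\O_x$, $x\in X$, each with automorphisms $\C^*$. The off-diagonal stratum of $U\times V$ has exponent $a-b=0$, while the diagonal stratum has Serre polynomial $P_q(X)/(q-1)^2$ and exponent $a-b=\ext^1(\O_x,\O_x)-\hom(\O_x,\O_x)=3-1=2$; its summand $P_q(X)(q^2-1)/(q-1)^2=P_q(X)(q+1)/(q-1)$ diverges as $q\to1$, and correspondingly $P_q(U*V)-P_q(U)P_q(V)$ diverges. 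Your mechanism (bounded pole orders plus a simple zero) would ``prove'' the identity here as well, so it proves too much. The cancellations that make the theorem true are global, forced by the hypothesis on $U$ and $V$ as wholes; this is precisely the bookkeeping the paper delegates to Joyce's identities \cite[Equations 80--83]{JoyceII}, taking $q\to1$ only after the expression has been organised so that the regularity of $P_q(U)$ and $P_q(V)$ at $q=1$ can be applied directly, rather than stratum by stratum.
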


\noindent\emph{``Proof"}.
Again this follows directly from \cite[Equations 80-83]{JoyceII} on taking the limit
$q\to1$. Conceptually it is very simple. As discussed above, the left hand side is an integral of $q^{\ext^1(T_2,T_1)-\hom(T_2,T_1)}$ over $U\times V$. But this has limit $1$ as $q\to1$. \hfill$\square$

\medskip
At first sight this seems to say that on passing to the limit $q\to1$, all extension information is lost. Of course this is not the case, since for most $U$ to which we apply $P_q$, automorphisms mean that $\lim_{q\to1}P_q(U)$ does not exist. 

\subsection{First proof of the topological DT/PT wall crossing} We now have the tools to prove Theorem \ref{EulerPT}. We give a proof along the lines that we have been discussing, for completeness. In the next section we give a shorter and simpler proof.

As in \eqref{MNOP},
$$
Z^I_C(X)(q,t)=P_q\big(\Onto(\I_C,\ \cdot\ )\big)(t),
$$
which by \eqref{Reineke} gives
\beq{ZI}
Z^I_C(X)(q,t)=P_q\big(\Hom(\I_C,\ \cdot\ )*1_{\T}^{-1}\big)(t).
\eeq
Setting $C=\emptyset$, i.e. $\I_C=\O_X$, we recover the degree 0 generating series
\beq{Z0}
Z^I_0(X)(q,t)=P_q\big(\C^{[\,\cdot\,]}*1_{\T}^{-1}\big)(t).
\eeq
Here $\C^{[\,\cdot\,]}$ is the stack whose fibre over $T\in\T$ is $\C^{[T]}$, where $[T]$ is the length of $T$. Over strata of $\T$ on which $\hom(\O_X,T)=[T]$ is
constant, $\Hom(\O_X,T)$ is Zariski-locally equivalent to $\C^{[T]}$. Therefore they
have the same integral.

The analogue of Reineke's formula \eqref{Reineke} for $\Pure$ is
$$
\Ext^1(\ \cdot\ ,\O_C)=1_\T*\Pure(\ \cdot\ ,\O_C).
$$
This just says that any extension from $T\in\T$ to $\O_C$ is uniquely the composition of a quotient $T\to T_1\to0$ and a \emph{pure} extension from $T_1$ to $\O_C$ -- i.e. one which does not factor through any further quotient of $T_1$.

Therefore,
$$
Z^P_C(X)(q,t)=P_q\big(1_{\T}^{-1}*\Ext^1(\ \cdot\ ,\O_C)\big)(t).
$$
By Theorem \ref{liealg} (i.e. the Serre duality and Riemann-Roch formula \eqref{T2T1}), this is
$$
P_q\big(\Ext^1(\ \cdot\ ,\O_C)*1_{\T}^{-1}\big)(t).
$$

By Riemann-Roch and Serre duality for $\I_C$ \eqref{RR}, the stacks $\Hom(\I_C,\ \cdot\ )$ and $\Ext^1(\ \cdot\ ,\O_C)\oplus\C^{[\,\cdot\,]}$ are both Zariski-locally trivial of the same rank over strata of $\T$ on which $\hom(\I_C,\ \cdot\ )$ is constant.
Therefore they have the same integral $P_q$. Now $\C^{[T]}$ has Serre polynomial $q^{[T]}$ and contributes to the coefficient of $t^{[T]}$, suggesting the substitution of $qt$ for $t$ in the above formula. So we consider
\begin{eqnarray} \nonumber
Z^P_C(X)(q,qt) &=& P_q\big(\Ext^1(\ \cdot\ ,\O_C)*1_{\T}^{-1}\big)(qt) \\
&=& P_q\Big(\big(\Ext^1(\ \cdot\ ,\O_C)\oplus\C^{[\,\cdot\,]}\big)*
\big(\C^{[\,\cdot\,]}\big)^{-1}\Big)(t) \nonumber \\ \label{ZPC} &=&
P_q\Big(\Hom(\I_C,\ \cdot\ )*\big(\C^{[\,\cdot\,]}\big)^{-1}\Big)(t).
\end{eqnarray}
We compare this to \eqref{ZI},
\begin{eqnarray*}
Z^I_C(X)(q,t) &=& P_q\Big(\Hom(\I_C,\ \cdot\ )*\big(\C^{[\,\cdot\,]}\big)^{-1}*
\C^{[\,\cdot\,]}*1_{\T}^{-1}\Big)(t) \\ &=& P_q(U*V)(t),
\end{eqnarray*}
where $U=\Hom(\I_C,\ \cdot\ )*\big(\C^{[\,\cdot\,]}\big)^{-1}$ and
$V=\C^{[\,\cdot\,]}*1_{\T}^{-1}$.

By \eqref{ZPC}, $\lim_{q\to1}P_q(U)$ exists and equals $Z^P_C(X)$. By \eqref{Z0}, $\lim_{q\to1}P_q(V)$ exists and equals $Z^I_0(X)$. Therefore by Theorem \ref{lim*} we find that
$$
Z^I_C(X)(t)=\lim_{q\to1}\Big(Z^I_C(X)(q,t)\Big)=Z^P_C(X).Z^I_0(X),
$$
as required.

\begin{rmk} To prove the punctual statement \eqref{punctual} it is enough to replace the category $\mathcal{T}$ in the argument above by the category $\mathcal{T}(p)$ of $0$-dimensional sheaves supported at $p \in C$ (that $C \subset \mathbb{A}^3$ is now affine causes no difficulties). 
\end{rmk}

\subsection{Insertions}
We note that Theorem \ref{main} does not require the 3-fold to be Calabi-Yau. This comes
as a surprise to those of us brought up on virtual cycles. The virtual dimensions of
$I_n(X,\beta)$ and $P_n(X,\beta)$ are both $\int_\beta c_1(X)$, so to get invariants
from the
virtual class when this is strictly positive we must use insertions. (We do not currently
understand how to fit descendants into this picture.) In particular one does not expect
identities that use the whole moduli space, as in Thereom \ref{main}.

However, we are grateful to Rahul Pandharipande for pointing out that one can incorporate
insertions into our result.
Since Theorem \ref{EulerPT} is true for each curve $C$ at a time, we can manipulate
it into a form more reminiscent of the DT/PT conjectures in the case that $\int_\beta
c_1(X)>0$. 

Let $\pi_I\colon X\times I_{n}(X,\beta)
\to I_{n}(X,\beta)$ and $\pi_P\colon X\times P_{n}(X,\beta)
\to P_{n}(X,\beta)$ be the obvious projections, and let $\pi_X$ denote the projection
from either product to $X$. Let $\mathcal Z\subset X\times I_{n}(X,\beta)$ denote the
universal subscheme, and let $\mathbb F$ be the universal sheaf over $X\times P_{n}(X,\beta)$
\cite{PT1}.

From any $T\in H^d(X,\Z)$ we define a cohomology class $\mu(T)\in H^{d-2}(I_{n}(X,\beta))$
by
$$
\mu(T):=\pi_{I*}\big(\pi_X^*(T)\cup\text{ch}_2(\O_{\mathcal Z})\big),
$$
and similarly $\mu(T)\in H^{d-2}(P_{n}(X,\beta))$ by
$$
\mu(T):=\pi_{P*}\big(\pi_X^*(T)\cup\text{ch}_2(\mathbb F)\big).
$$
Fix $\mathbf T=(T_i\in H^{d_i}(X,\Z))_{i=1}^k$ of total degree
$\sum_{i=1}^kd_i=\int_\beta c_1(X)+2k$. We define the invariants with insertions $\mathbf
T$ by integrating over the virtual cycle:
$$
I^{vir}_{n,\beta}(\mathbf T):=\int_{[I_n(X,\beta)]^{vir}}\mu(T_1)\cup\ldots\cup\mu(T_k), $$
and
$$
P^{vir}_{n,\beta}(\mathbf T):=\int_{[P_n(X,\beta)]^{vir}}\mu(T_1)\cup\ldots\cup\mu(T_k).
$$
These invariants are then conjectured in \cite{PT1} to satisfy the identity
$$
I_{n,\beta}^{vir}(\mathbf T)\ =\ P^{vir}_{n, \beta}(\mathbf T)\ +\ I^{vir}_{1,0}\cdot
P^{vir}_{n-1, \beta}(\mathbf T)\ +\ I^{vir}_{2,0}\cdot P^{vir}_{n-2,\beta}(\mathbf T)\
+\ \ldots\ .
$$

Dually they can be defined by homology classes $T_i$ on $X$ and the slant product in
place
of the cup product. Capping the resulting cohomology classes with the virtual class loosely
corresponds to cutting down the virtual cycle to those subschemes (respectively pairs)
which intersect all of the cycles $T_i$. So if we let $\mathbf T$ denote the cycle of
those Cohen-Macaulay curves which intersect all of the cycles $T_i$ then we think of the insertion invariants as the intersection of the virtual cycle with the pullback via $\varphi_I$
(respectively $\varphi_P$) of $\mathbf T$.

In place of intersection with the virtual cycle we take as our topological analogue the Euler characteristic instead:
$$
I_{n,\beta}(\mathbf T):=e(\varphi_I^*\mathbf T),\qquad\text{and}\qquad P_{n,\beta}(\mathbf
T):=e(\varphi_P^*\mathbf T).
$$
Pushing down by $\varphi_I$ we see that $e(\varphi_I^*\mathbf T)$ is the Euler characteristic of $\mathbf T$ weighted by the constructible function $I_{n,C}$. (Similarly for
$e(\varphi_P^*\mathbf T)$.) Therefore integrating Theorem \ref{EulerPT} over $\mathbf
T$ gives the identity
$$
I_{n,\beta}(\mathbf T)\ =\ P_{n, \beta}(\mathbf T)\ +\ I_{1,0}\cdot P_{n-1, \beta}(\mathbf
T)\ +\ I_{2,0}\cdot P_{n-2,\beta}(\mathbf T)\ +\ \ldots\ .
$$

With hindsight one can see these identities in the toric computations of \cite{PT2}.
That such analogies should be true more generally perhaps suggests that there might be
an extension
of Behrend's result for general 3-folds, expressing insertion invariants as weighted
Euler characteristics of cut down moduli spaces.

% Then any $T\in H^j(X,\Z)$ defines operators
% $$
% \widehat T:=\pi_{I*}\big(\pi_X^*(T)\cdot \text{ch}_2(\O_{\mathcal Z})
% \cap(\pi_I^*(\ \cdot\ ))\big)\colon  H_*(I_{n}(X,\beta))\to H_*(I_{n}(X,\beta))
% $$
% and
% $$
% \widehat T:=\pi_{P*}\big(\pi_X^*(T)\cdot \text{ch}_2(\mathbb F)
% \cap(\pi_P^*(\ \cdot\ ))\big)\colon  H_*(P_{n}(X,\beta))\to H_*(P_{n}(X,\beta))
% $$
% of degree $2-j$. Therefore if $T_i\in H^{j_i}(X,\Z)$ then
% $$
% \widehat T_k\circ\widehat T_{k-1}\circ\ldots\circ\widehat T_1
% $$
% has degree $2k-\sum_{i=1}^kj_i$. When this equals $-\int_\beta c_1(X)$ we can apply it
% to the virtual class to get the (MNOP or stable pairs) invariant associated to the
% insertions $T_1,\ldots, T_k$. For fixed insertions, these invariants are conjectured
% to satisfy the usual identity \eqref{virid}.

\section{A short proof of the topological DT/PT wall crossing}\label{secondProof}

Let $P_C$ be be the stack (in fact projective scheme) of stable pairs supported on $C$, and $I_C$ be the subset of the Hilbert scheme consisting of 1-dimensional subschemes $Z$ such that $\I_C/\I_Z$ is 0-dimensional.

Consider the stack of 2-term complexes $\{\O_X\to F\}$, where $F$ is supported on a one-dimensional
subscheme whose underlying Cohen-Macaulay curve is $C$, and the cokernel of the map is 0-dimensional. Filtering such a complex by either its maximal $0$-dimensional subsheaf or its maximal 0-dimensional quotient sheaf, this stack can be written as either of
$$
I_C*1_{\T[-1]}=1_{\T[-1]}*P_C.
$$
This is really the Harder-Narasimhan filtration for these complexes in the two different stability conditions on either side of the wall. Therefore
\beq{I1S}
I_C=1_{\T[-1]}*P_C*1_{\T[-1]}^{-1}.
\eeq
This is nothing but a convenient shorthand to encapsulate the wall crossing discussed in Section \ref{ptwc} (i.e. the triangles \eqref{ideal} and \eqref{pair}). This holds in the abelian category (of perverse sheaves containing $\T[-1]$, stable pairs and ideal sheaves) obtained by tilting the usual one with the torsion subcategory $\T$. Thus we can again apply Joyce's theory. Considered inside this abelian category, we denote $\T[-1]$ by $\S$, so that \eqref{I1S} gives
\beq{ending}
P_q(I_C)=P_q\big(1_{\S}*P_C*1_{\S}^{-1}\big).
\eeq

We would like to commute $1_{\S}$ past $P_C*1_{\S}^{-1}$ using Theorem \ref{liealg}. However $T[-1]\in\S$ has Mukai vector $\chi(T[-1],\ \cdot\ )=[T]$ with anything in $P_C*1_{\S}^{-1}$ so Theorem \ref{liealg} has to be modified accordingly. Namely, by \cite[Equations 80-83]{JoyceII},
$$
P_q\Big(1_{\S}*P_C*1_{\S}^{-1}\Big)=
P_q\Big(P_C*1_{\S}^{-1}*\C_{\S}^{[\,\cdot\,]}\Big).
$$
(Intuitively: the left hand side is the integral of $q^{\ext^1(T[-1],\ \cdot\ )-\hom(T[-1],\ \cdot\ )}$ over $\S\times
\big(P_C*1_{\S}^{-1}\big)$, the right hand side is the integral of $q^{[T]+\ext^1(\ \cdot\ ,T[-1])-\hom(\ \cdot\ ,T[-1])}$. But these are equal by Serre duality and Riemann-Roch.)

Therefore by Theorems \ref{lim*} and \ref{liealg}, the limit $q\to1$ of \eqref{ending} gives
\begin{eqnarray*}
Z_C^I(X) &=& \lim_{q\to1}P_q(P_C).\lim_{q\to1}P_q(1_{\T}^{-1}*\C_{\T}^{[\,\cdot\,]})
\\ &=& \lim_{q\to1}P_q(P_C).\lim_{q\to1}P_q(\C_{\T}^{[\,\cdot\,]}*1_{\T}^{-1}) \\ &=& Z_C^P(X).Z_0^I(X),
\end{eqnarray*}
by \eqref{Z0}. We explained in Section
\ref{gencase} how to integrate this identity over the space of $C$s to give Theorem \ref        {main},
i.e. $Z_\beta^I(X)=Z_\beta^P(X).Z_0^I(X)$.

\vspace{.5cm}
Max Planck Institute for Mathematics\\
{\tt stoppa@mpim-bonn.mpg.de}
\vspace{.5cm}\\
Department of Mathematics\\
Imperial College London\\
{\tt richard.thomas@imperial.ac.uk}
\end{document}